\DeclareFontFamily{U}{mathb}{\hyphenchar\font45}
\DeclareFontShape{U}{mathb}{m}{n}{
<-6> mathb5 <6-7> mathb6 <7-8> mathb7
<8-9> mathb8 <9-10> mathb9
<10-12> mathb10 <12-> mathb12
}{}
\DeclareSymbolFont{mathb}{U}{mathb}{m}{n}
\DeclareMathSymbol{\llcurly}{\mathrel}{mathb}{"CE}
\DeclareMathSymbol{\ggcurly}{\mathrel}{mathb}{"CF}
\newtheorem{theorem}{Theorem}[section]
\newtheorem{lemma}[theorem]{Lemma}
\newtheorem{corollary}[theorem]{Corollary}
\newtheorem{proposition}[theorem]{Proposition}
\theoremstyle{definition}
\newtheorem*{definition}{Definition}
\theoremstyle{remark}
\numberwithin{equation}{section}
\newcommand{\mmod}[1]{\,\,(\text{mod}\,\,#1)}
\def\bb{{\mathbf b}}
\def\bc{{\mathbf c}}
\def\bh{{\mathbf h}}
\def\bx{{\mathbf x}}
\def\by{{\mathbf y}}
\def\calA{{\mathcal A}} 
\def\calB{{\mathcal B}} 
\def\calC{{\mathcal C}} 
\def\calD{{\mathcal D}}
\def\calE{{\mathcal E}}
\def\calF{{\mathcal F}}
\def\calI{{\mathcal I}}
\def\calJ{{\mathcal J}}
\def\calM{{\mathcal M}}
\def\calP{{\mathcal P}}
\def\calU{{\mathcal U}}
\def\scrA{{\mathscr A}}
\def\scrJ{{\mathscr J}}
\def\scrO{{\mathscr O}}
\def\scrS{{\mathscr S}}
\def\scrU{{\mathscr U}}
\def\Gtil{\widetilde G}
\def\N{\mathbb N}
\def\Q{\mathbb Q}
\def\R{\mathbb R}
\def\T{\mathbb T}
\def\Z{\mathbb Z}
\def\frB{{\mathfrak B}}
\def\frJ{{\mathfrak J}}
\def\frm{{\mathfrak m}}\def\frM{{\mathfrak M}}
\def\frn{{\mathfrak n}}\def\frN{{\mathfrak N}}
\def\frS{{\mathfrak S}}
\def\alp{{\alpha}}  
\def\bet{{\beta}}
\def\gam{{\gamma}} \def\Gam{{\Gamma}} 
\def\del{{\delta}} \def\Del{{\Delta}}
\def\eps{\varepsilon}
\def\zet{{\zeta}}  
\def\tet{{\vartheta}}  
\def\kap{{\kappa}}
\def\lam{{\lambda}}
\def\sig{{\sigma}} \def\Sig{{\Sigma}}
\def\vrho{\varrho}
\def\vphi{{\varphi}}
\def\ome{{\omega}} 
\def\balp{{\boldsymbol \alpha}}
\def\bbet{{\boldsymbol \beta}}
\def\bgam{\boldsymbol \gamma}
\def\bzet{{\boldsymbol \zeta}}
\def\btet{{\boldsymbol \vartheta}}
\def\bxi{{\boldsymbol \xi}}
\def\bphi{{\boldsymbol \varphi}}
\def\d{{\partial}}
\def\bzer{\boldsymbol 0}
\def\implies{\Rightarrow}
\def\le{\leqslant} \def\ge{\geqslant}
\def\d{{\,{\rm d}}}
\def\mmod#1{\;(\mathrm{mod}\;{#1})}
\DeclareMathOperator{\vol}{vol}
\DeclareMathOperator{\li}{li}
\DeclareMathOperator{\supp}{supp}
\title[A minimalist approach to the circle method]{A minimalist approach to the circle method\\and Diophantine problems over thin sets}
\author[K. D. Biggs]{Kirsti D. Biggs}
\address{Department of Mathematics, Uppsala University, 75106 Uppsala, Sweden.\newline
	 {\tt kirsti.biggs@math.uu.se}}
\author[J. Brandes]{Julia Brandes}
\address{Mathematical Sciences, University of Gothenburg and Chalmers University of Technology, 41296 Gothenburg, Sweden.
	 {\tt brjulia@chalmers.se}}
\keywords{Hardy--Littlewood method, Waring's problem, sparse integers}
\subjclass[2020]{11P55, 11P05, 11D45, 11D85, 11B13}
\begin{document}
\maketitle

\begin{abstract}
	This paper studies the minimal conditions under which we can establish asymptotic formul{\ae} for Waring's problem and other additive problems that may be tackled by the circle method. We confirm in quantitative terms the well-known heuristic that a mean value estimate and an estimate of Weyl type, together with suitable distribution properties of the underlying set over a set of admissible residue classes, are sufficient to implement the circle method. This allows us to give a rather general proof of Waring's problem which is applicable to a range of sufficiently well-behaved thin sets, such as the ellipsephic sets recently investigated by the first author. 
\end{abstract}

	\section{Background and teaser}

	Let $n$ be a natural number. In how many ways can $n$ be written as a sum of elements in a given subset $\calA \subseteq \N$? This simple question lies at the crossroads of various fields in pure mathematics: When the subset is the set of primes, this question amounts to Goldbach's problem. In the case of $k$th powers, it is an instance of Waring's problem, and for more general sets it is a question under active investigation in additive combinatorics. Moreover, if the elements of the subset are equipped with complex weights, the same problem becomes a question in discrete harmonic analysis. 
	
	In the manuscript at hand, we study Waring's problem and related questions in situations in which the variables are restricted to a fairly general subset of the integers. In particular, we explore what properties are sufficient if one wants to derive asymptotic formul{\ae} for the number of representations of any sufficiently large integer as a sum of $k$th powers of elements in a given subset of the integers. This can be viewed as a generalisation of work by Br\"udern \cite{B} on binary additive forms, in which he gives sufficient conditions on the sets $\calA$ and $\calB$ such that the equation $a+b=n$ has a solution with $a \in \calA$ and $b \in \calB$ for all sufficiently large integers $n$. 
	Our methods can also be used to derive asymptotic formul{\ae} for mean values of exponential sums in the supercritical range, with potential applications to questions in discrete analysis.  
	
	Historically, Waring's problem has mostly been studied over the integers. Write $G(k)$ for the least integer $s$ having the property that any sufficiently large $n \in \N$ has a representation as a sum of $s$ positive $k$th powers. Similarly, write $\Gtil(k)$ for the least integer $s$ such that the number of representations of $n$ as a sum of $s$ positive $k$th powers satisfies the anticipated asymptotic formula. There is a huge body of research aiming at reducing the upper bounds of $G(k)$ and $\Gtil(k)$, with some of the strongest available bounds due to Wooley \cite{W:95}, \cite{W:12} (for a more complete bibliography see also the survey paper by Vaughan and Wooley \cite{VW:survey}). 
	
	Waring's problem is generally handleable over sets with positive lower density (see Theorem~1.3 in \cite{BGV}, and in particular Theorem~1.3 in \cite{salmensuu}), and instances of this are given in the case of Beatty sequences \cite{BGV} or numbers with mild digital restrictions \cite{TT}. Moreover, Salmensuu \cite{salmensuu} used the transference principle from additive combinatorics to show that $k^2+k+1$ summands are sufficient to represent every large integer as soon as the lower density of the set $\calA$ is at least roughly $2^{-1/k}$ for large enough $k$. While the number of summands is larger than the $\sim k \log k$ that are required over the integers, it is essentially the same that is needed to get an asymptotic formula for the number of such representations. 
	
	The study of Waring's problem over thin sets is considerably harder, and has for the longest time focussed mainly on the case of primes, in which case it is known as the Waring--Goldbach theorem. Some bibliography of results in the field can be found in a survey paper by Kumchev and Tolev \cite{KT:survey}.  Beyond these, only very few thin sets have been investigated, including both integers \cite{AG16:Waring-PShap} and primes \cite{AG18a:Waring-Goldbach-PShap,AG18b:Waring-Goldbach-PShap} of Piatetski-Shapiro type, or sets whose elements have a representation as a sum of powers in their own right \cite{pliego-3cub, pliego-gen}. An interesting case is that of smooth numbers $\scrA(Q,P) = \{x \le P: p|x \implies p<Q\}$. When $Q$ is a small power of $P$, this set is of positive lower density, and the treatment of Waring's problem in this case is classical (see \cite{V89:iterative}) and plays a crucial role in obtaining the best possible bounds for $G(k)$. If, however, $Q=(\log P)^\kap$ for some $\kap>1$, then the cardinality of $\scrA(Q,P)$ grows like $P^{1-1/\kap+\eps}$, and it has been shown by Drappeau and Shao \cite{DS} that the analogue of Waring's problem continues to hold over these thin sets whenever $\kap$ is sufficiently large. In a similar vein, Wooley investigated a weighted version of Waring's problem that is skewed towards small solutions \cite{W:lightweight} and thus to some measure mimics the behaviour of a thin set. This last paper is related to work of Vu \cite{Vu}, later refined by Wooley \cite{W:vu}, in which they applied a probabilistic approach to show that whenever Waring's problem over the integers is solvable for a given number $n$, then there exists a suitably thin subset of the integers over which Waring's problem also has a solution, requiring only two additional variables. 
	
 	It is worth pointing out that in all of the instances mentioned the strategy to analyse the major arcs is typically to adapt the standard techniques developed for the integer case, and is thus typically heavily informed by the precise shape of the set under investigation. Our contribution lies in the fact that we propose a unified treatment of the major arc analysis, subject only to the same requirements that are already necessary in order to control the minor arcs, and applicable for all sets that satisfy some suitable distribution properties over residue classes. 
	
	Since stating our most general theorems requires a certain amount of notation, we defer the precise statements of our results to the next section and focus instead on presenting some of the most interesting consequences in order to give a flavour of the conclusions we are able to obtain. Let $\calA \subseteq \N$ be an infinite set\footnote{The notation introduced here will be generalised in the next section.}, and put 
	$$
		A(X) = \#\{x \in \calA, x \le X\}.
	$$
	For some of our applications it is useful to define the logarithmic lower density 
	\begin{align}\label{0.def-lam}
		\lam = \lam(\calA)= \liminf_{X \to \infty} \frac{\log A(X)}{\log X},
	\end{align}
	so that $A(X) \gg X^{\lam-\eps}$ for all sufficiently large $X$. 
	
	We write $R_{s;k}(n;\calA)$ for the number of representations of a natural number $n$ in the shape 
	\begin{align}\label{0.Waring-eq}
		n=x_1^k + \ldots + x_s^k,
	\end{align}
	where $x_1, \ldots, x_s \in \calA$. We also write $R_{s,1;k}(n;\calA)$ for the number of representations 
	\begin{align}\label{0.Waring-eq-2}
		n=x_1^k+\ldots+x_s^k+y^k,
	\end{align}
	where $x_1, \ldots, x_s \in \calA$ as before, but we allow one variable $y$ to range over all positive integers. For these counting functions we expect asymptotic formul{\ae} of the shape 
	\begin{align*}
		R_{s;k}(n;\calA)=A(n^{1/k})^s n^{-1} (\calC_{\calA}(n)+o(1))
	\end{align*}
	and
	\begin{align*}
		R_{s,1;k}(n;\calA)=A(n^{1/k})^s n^{1/k-1} (\calC^*_{\calA}(n)+o(1)),
	\end{align*}
	respectively, where $\calC_{\calA}$ and $\calC^*_{\calA}$ are non-negative constants having an interpretation in terms of the solution densities of the local analogues of~\eqref{0.Waring-eq} and~\eqref{0.Waring-eq-2}. Also, it will be useful to define the parameter $\vrho_0=\vrho_0(k)$ via 
	\begin{align}\label{0.rho}
		\vrho_0(k)^{-1} = \min \{2^{k-1}, (k-1)(k-2) + 2 \lfloor \sqrt{2 k}\rfloor\}.
	\end{align}
	In this notation, we have a version of Waring's problem for fairly general sets, provided that they satisfy certain distribution properties and are not too thin. 
	
	\begin{theorem}\label{T0:Akshat}
		Let $\calA = \{x_1, x_2, \ldots\} \subseteq \N$ be a set having the property that for every $q \in \N$ and every $b \in \Z/q\Z$ there exists a number $\kap(q,b) \ge 0$ with the property that
		\begin{align}\label{T0:Akshat-cond2}
			\# \{x \in \calA \cap [1,X]:\, x \equiv b \mmod q\} = \kap(q,b) A(X) + O(1)
		\end{align}
		Suppose further that we are in one of the following two cases:
		\begin{enumerate}[(A)]
			\item\label{T0:Akshat-VMVT} We have $\lam(\calA)^{-1} < 1+\vrho_0(k) \vrho_0(k+1)/5$ and $s \ge \vrho_0(k+1)^{-1} + 1$.
			\item\label{T0:Akshat-convex} The set $\calA$ satifies $\lam(\calA)^{-1} < 1+\vrho_0(k)/(10k)$ and  
			\begin{align}\label{T0:Akshat-cond1}
				x_{n+1}-x_n \le x_{n+2}-x_{n+1} \qquad \text{for all $n \in \N$,}
			\end{align} 
			and we have $s \ge C 2^k \log k$ for some computable constant $C$.
		\end{enumerate}
		Then we have 
		\begin{align*}
		R_{s,1;k}(n;\calA)=A(n^{1/k})^s n^{1/k-1} (\calC^*_{\calA}(n)+o(1)).
		\end{align*}		
	\end{theorem}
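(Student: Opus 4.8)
The plan is to run the Hardy--Littlewood circle method in its classical form, the only non-routine inputs being a Weyl-type bound and a mean value bound for the exponential sum over $\calA$, which are supplied by the preceding sections of the paper --- in case~(A) via the Vinogradov mean value theorem together with the equidistribution~\eqref{T0:Akshat-cond2}, and in case~(B) via van der Corput's method applied to the convexity~\eqref{T0:Akshat-cond1} together with Hua's inequality. Writing $e(z)=e^{2\pi i z}$ and $P=n^{1/k}$, put
\[
	f(\alpha)=\sum_{\substack{x\in\calA\\ x\le P}}e(\alpha x^{k}),\qquad g(\alpha)=\sum_{1\le y\le P}e(\alpha y^{k}),
\]
so that orthogonality gives $R_{s,1;k}(n;\calA)=\int_{0}^{1}f(\alpha)^{s}g(\alpha)e(-n\alpha)\,\d\alpha$. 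Fix a small $\del>0$, set $Q=P^{\del}$, and dissect $[0,1)$ into major arcs $\frM$ --- the union over coprime pairs $a,q$ with $1\le q\le Q$ and $0\le a\le q$ of the intervals $\abs{\alpha-a/q}\le QP^{-k}$ --- and minor arcs $\frm=[0,1)\setminus\frM$.

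On a major arc about $a/q$, sorting $\calA\cap[1,P]$ into residue classes modulo $q$ and performing partial summation via~\eqref{T0:Akshat-cond2} yields, with $\alpha=a/q+\bet$,
\[
	f(a/q+\bet)=\kap^{*}(q,a)A(P)v(\bet)+O\bigl(q(1+P^{k}\abs{\bet})\bigr),\qquad\kap^{*}(q,a)=\sum_{b\mmod q}\kap(q,b)e(ab^{k}/q),
\]
where $v(\bet)=P^{-1}\int_{0}^{P}e(\bet t^{k})\,\d t$, together with the standard approximation $g(a/q+\bet)=q^{-1}S(q,a)Pv(\bet)+O(q^{1/2+\eps})$ for $S(q,a)=\sum_{r\mmod q}e(ar^{k}/q)$. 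Substituting, multiplying out, summing over $a$ and $q\le Q$ and integrating over $\abs{\bet}\le QP^{-k}$, the arithmetic sums assemble into a singular series $\frS^{*}(n)$ and the $\bet$-integral into a singular integral $\frJ^{*}(n)$, giving
\[
	\int_{\frM}f(\alpha)^{s}g(\alpha)e(-n\alpha)\,\d\alpha=A(P)^{s}P^{1-k}\bigl(\calC^{*}_{\calA}(n)+o(1)\bigr),\qquad\calC^{*}_{\calA}(n):=\frS^{*}(n)\frJ^{*}(n).
\]
Here $\frS^{*}(n)$ converges absolutely once $s$ exceeds the stated thresholds, since~\eqref{T0:Akshat-cond2} forces $\kap^{*}(q,a)$ to display the same cancellation in $q$ as $q^{-1}S(q,a)$; summing~\eqref{T0:Akshat-cond2} over $b$ shows $\sum_{b}\kap(q,b)=1$, so at least one residue class is admissible for every $q$, whence --- using also the free variable $y$ --- the congruence $\xi_{1}^{k}+\dots+\xi_{s}^{k}+\eta^{k}\equiv n\mmod{q}$ is soluble for all $q$ and $\frS^{*}(n)\gg 1$; likewise $\frJ^{*}(n)\gg 1$, since $\xi_{1}^{k}+\dots+\xi_{s}^{k}+\eta^{k}=n$ has a nonsingular real solution with all variables of order $P$. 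Thus $\calC^{*}_{\calA}(n)\gg 1$. The errors from truncating $\frM$ at $q\le Q=P^{\del}$ and $\abs{\bet}\le QP^{-k}$ are smaller than the main term by a fixed power of $P$ once $\del$ is small in terms of $\lam$, hence negligible.

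On the minor arcs the crucial input is the Weyl-type bound $\sup_{\alpha\in\frm}\abs{f(\alpha)}\ll P^{1-\tau+\eps}$ for a fixed $\tau=\tau(k)>0$; the essential feature is that the saving is over the envelope $P$ and not over $A(P)=P^{\lam+o(1)}$, so that the bound is useful --- beating the trivial $\abs{f}\le A(P)$ --- only when $\lam>1-\tau$, and it is precisely this that confines the density exponent to the stated ranges. Since $s\ge\vrho_{0}(k+1)^{-1}+1$ there is at least one spare $\calA$-variable, so
\[
	\int_{\frm}\abs{f}^{s}\abs{g}\,\d\alpha\le\Bigl(\sup_{\alpha\in\frm}\abs{f(\alpha)}\Bigr)^{s-\vrho_{0}(k+1)^{-1}}\int_{0}^{1}\abs{f(\alpha)}^{\vrho_{0}(k+1)^{-1}}\abs{g(\alpha)}\,\d\alpha ,
\]
and bounding the residual integral by embedding $\calA\subseteq[1,P]$ and invoking the classical (Hua and Vinogradov) mean value estimates for $g$ --- for which $\vrho_{0}(k+1)^{-1}$ is the relevant threshold --- by $\ll P^{\vrho_{0}(k+1)^{-1}+1-k+\eps}$, we obtain
\[
	\int_{\frm}\abs{f}^{s}\abs{g}\,\d\alpha\ll P^{(1-\tau)(s-\vrho_{0}(k+1)^{-1})+\vrho_{0}(k+1)^{-1}+1-k+\eps}.
\]
Since $A(P)^{s}P^{1-k}\gg P^{\lam s+1-k-\eps}$, the right-hand side is $o\bigl(A(P)^{s}P^{1-k}\bigr)$ as soon as $(1-\tau)(s-\vrho_{0}(k+1)^{-1})+\vrho_{0}(k+1)^{-1}<\lam s$; evaluating at the extremal $s=\vrho_{0}(k+1)^{-1}+1$ this rearranges to $\lam>1-\tau/(\vrho_{0}(k+1)^{-1}+1)$, which with $\tau$ a suitable fixed fraction of the Weyl exponent $\vrho_{0}(k)$ is exactly the hypothesis $\lam^{-1}<1+\vrho_{0}(k)\vrho_{0}(k+1)/5$ of case~(A) (and, mutatis mutandis, $\lam^{-1}<1+\vrho_{0}(k)/(10k)$ in case~(B), where the weaker van der Corput mean value is compensated by the larger requirement $s\ge C2^{k}\log k$). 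Adding the major and minor arc contributions yields the asserted formula.

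The step I expect to be the main obstacle is the Weyl-type bound for the thin sum $f$ on the minor arcs, together with the verification that the two hypothesis-sets are strong enough. For the general sets of case~(A), equidistribution~\eqref{T0:Akshat-cond2} is essentially the only structural handle on $\calA$, so such a bound must come from comparing $f$ with classical $k$-th power exponential sums --- routed through the Vinogradov mean value theorem, which is why a degree-$(k+1)$ quantity $\vrho_{0}(k+1)$ appears --- and because such a comparison is insensitive to the sparsity of $\calA$, one saves only relative to $P$, which is what forces $\lam$ close to $1$; the single spare variable dilutes the saving by the further factor $\vrho_{0}(k+1)$ visible in the density condition. For the convex sets of case~(B), van der Corput's method applies to $f$ directly and tolerates thinner sets, but yields only a weak mean value, whence the larger number of variables; the two cases thus encode the familiar trade-off between a strong mean value and a strong pointwise input. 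The partial-summation bookkeeping on $\frM$, the control of the $O(1)$ error in~\eqref{T0:Akshat-cond2}, and the pruning between $\frM$ and $\frm$ are routine.
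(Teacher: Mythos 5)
Your minor-arc treatment contains a genuine gap, and it is precisely the gap that the paper's architecture is designed to avoid. You base everything on a purported Weyl-type bound $\sup_{\alpha\in\frm}|f(\alpha)|\ll P^{1-\tau+\eps}$ for the \emph{thin} sum $f(\alpha)=\sum_{x\in\calA,\,x\le P}e(\alpha x^k)$, asserting it ``must come from comparing $f$ with classical $k$-th power exponential sums routed through the Vinogradov mean value theorem.'' No such mechanism exists: mean value estimates (and comparison of even moments with the full integer range) give average information only and cannot produce a pointwise supremum bound on minor arcs, and the sole structural hypothesis on $\calA$ in Theorem~\ref{T0:Akshat}, namely the distribution condition~\eqref{T0:Akshat-cond2}, controls $f$ only near rationals with small denominators, i.e.\ on major arcs. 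The paper states this obstruction explicitly — it is the reason the theorem is formulated for $R_{s,1;k}$ with one variable $y$ running over all of $\N$ — and its proof routes the pointwise minor-arc saving through the \emph{classical} sum $g(\alpha)=\sum_{y\le P}e(\alpha y^k)$ (via \cite[Theorem~2.1(b)]{BP2}, giving the exponent $\vrho_0$), estimating $\int_{\frm}|f|^sg \le \sup_{\frm}|g|\int_{\T}|f|^s\,\d\alpha$ and using only the mean value hypothesis~\eqref{V1} for $f$. That hypothesis is then verified by $I_{t,k}(X;\calA)\le I_{t,k}(X;\N)$ together with \cite[Corollary~14.8]{W:NEC} in case~\eqref{T0:Akshat-VMVT}, and by Mudgal's higher-energy estimates for $(k-1)$-convex sets \cite[Theorem~1.6]{mudgal-convex} in case~\eqref{T0:Akshat-convex} — not by van der Corput applied to $f$, which again would be a pointwise claim with no support here. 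With these inputs the theorem follows from the general machinery (Theorem~\ref{T:Waring2} via Corollary~\ref{cor-W2}); the density thresholds arise from requiring $\Del(X)\ll A(X)^{\vrho_0/5-\del}$, where the $1/5$ comes from the major-arc approximation error (the $Q^{2r+3}$ volume factor with $r=1$), not from the minor-arc exponent bookkeeping you describe.

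Two smaller points. Your major-arc sketch is in the right spirit (partial summation against~\eqref{T0:Akshat-cond2}, assembling $\frS^*$ and $\frJ^*$), but the paper must do this without any asymptotic for $A(X)$, constructing the approximant $\Psi^*$ and deriving the decay of $S_{\calA}(q,b)$ and $w_{\calA}(\gamma)$ from the minor-arc data itself rather than from classical complete-sum estimates; and your claim $\calC^*_{\calA}(n)\gg1$ is both unproven (solubility of the congruence with all $x_i$ confined to $\kap$-admissible classes is not automatic, even with the free variable $y$) and unnecessary, since the theorem only asserts a non-negative constant.
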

	
	To the authors' knowledge, this is the first time a version of Waring's problem over thin sets has been stated for sets of a similar generality. Note in particular that we do not assume $A(X)$ to satisfy any asymptotic formula. Instead we have only the density condition, which for $k\ge6$ is of the shape $\lambda(\calA) \ge 1-ck^{-4}$ in case \eqref{T0:Akshat-VMVT} and of size $\lambda(\calA) \ge 1-ck^{-3}$ in statement \eqref{T0:Akshat-convex}, where $c<1$ is some absolute constant. These density conditions can be relaxed at the expense of including more variables running over the whole of $\N$. We also remark that the error terms in condition~\eqref{T0:Akshat-cond2} can be relaxed, depending on the properties of the set $\calA$. Similarly, in part \eqref{T0:Akshat-convex} of the statement there is a trade-off between the specific shape of the respective bounds on the logarithmic density $\lambda(\calA)$ and the number of variables $s$, each of which can be sharpened at the expense of the other. 
	
	The main ingredient in the conclusion of Theorem~\ref{T0:Akshat} is Vinogradov's mean value theorem \cite[Theorem~1.1]{BDG} (see also \cite[Theorem~1.1]{W:NEC}) for part \eqref{T0:Akshat-VMVT}, and a recent result by Mudgal \cite{mudgal-convex} on higher energies of $k$-convex sets in part \eqref{T0:Akshat-convex}. In the first case, when $\lambda(\calA)$ is not too far from $1$, one does not lose too much by summing over all integers rather than only those in $\calA$; this is the origin of the density condition. Meanwhile, Mudgal's result provides a Hua-type bound for convex sets, which requires more variables but applies to potentially more general sets. We use his result in a somewhat wasteful way, yet it permits sets of a smaller logarithmic density than that which follows from Vinogradov's mean value theorem. On the other hand, in such settings where the underlying set is better understood, one would expect to get accordingly stronger results. 
    
    One set where we have such estimates is that of ellipsephic numbers. We refer to a set as a $B_m$ Sidon set if it has the property that 
	\begin{equation}\label{0:additivity}
		\#\{(a_1,\dots,a_m)\in \calD_p^m\mid a_1+\ldots+a_m=n\} \le m!
	\end{equation}
	for all $n \le mp$. In other words, a set $\calD$ is a $B_m$ Sidon set if the representation of any $n$ as a sum of $m$ elements in $\calD$ is unique up to permutations. 
	For a fixed $B_m$ Sidon set $\calD_p \subseteq \Z/p\Z$ we then define the set $\calE_p$ of $p$-ellipsephic numbers as the set 
	$$
		\calE_p = \{x \in \N: x = a_0 + a_1 p + a_2 p^2 + \ldots, \qquad a_i \in \calD_p\}.
	$$
	Observe that 
	$$
		\# \calE_p(p^h) = (\# \calD_p)^h = (p^h)^{\log \# \calD_p / \log p},
	$$
	and it is not hard from this to show that $\lam(\calE_p) =\log \# \calD_p / \log p$. Note that the property \eqref{0:additivity} implies that
	$$
		A(X)^m \asymp \#(m \calE_p(X)) \ll X,
	$$ 
	where the expression $m \calE_p(X)$ is to be interpreted as the sumset consisting of all elements $a_1 + \ldots + a_m$ with $a_1, \ldots, a_m \in \calE_p(X)$. 
	Consequently, the digit set of an ellipsephic set of logarithmic lower density $\lam$ can be a $B_m$ Sidon set only when $m \le 1/\lam$. Under such conditions, our methods lead to the following result. 
	\begin{theorem}\label{T0:Kirsti}
	Let $p>k$ be a prime, and let $\calD_p$ be a $B_m$ Sidon set with $0 \in \calD_p$.
    Let $\calE_p$ be the ellipsephic set whose digits in base $p$ lie in $\calD_p$. Suppose further that 
    \begin{align}\label{0.ellips-cond}
        m > \frac{1}{\lambda}-\frac{2\vrho_0}{5k(k+1)},
    \end{align}
    for $\lambda$ and $\vrho_0$ defined as in \eqref{0.def-lam} and \eqref{0.rho} respectively. Whenever $s\geq 2t_0=m k(k+1)$, we have the asymptotic formula
    \begin{align*}
        R_{s,1;k}(n; \calE) = A(n^{1/k})^sn^{1/k-1} (\calC_{\calE}^*(n) + o(1)).
    \end{align*}
    Moreover, the constant $\calC_{\calE}^*(n)$ has an interpretation in terms of solution densities of the underlying equation over all completions of $\Q$. 
	\end{theorem}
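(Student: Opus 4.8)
The plan is to obtain Theorem~\ref{T0:Kirsti} as a consequence of the general circle-method transfer principle developed in the next section, whose hypotheses for the set $\calA=\calE_p$ amount to three things: the residue distribution property \eqref{T0:Akshat-cond2}; a Vinogradov-type mean value of near-diagonal strength for the ellipsephic Weyl sum
\begin{align*}
	f(\alpha)=\sum_{\substack{x\in\calE_p\\ x\le X}} e(\alpha x^k),\qquad X=n^{1/k},
\end{align*}
where $e(z)=e^{2\pi i z}$; and a Weyl-type minor-arc bound for $f$. The auxiliary variable $y$ in \eqref{0.Waring-eq-2} ranges over all of $\N$ and so contributes the classical Weyl sum $g(\alpha)=\sum_{y\le X}e(\alpha y^k)$, for which Hua's inequality and Weyl's estimate are available off the shelf. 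Granting these inputs, the major-arc analysis of the framework yields the main term $A(n^{1/k})^s n^{1/k-1}\calC^*_{\calE}(n)$ together with the description of $\calC^*_{\calE}(n)$, and what remains is to show that the minor arcs contribute $o\bigl(A(n^{1/k})^s n^{1/k-1}\bigr)$.

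The crucial ingredient is the mean value. Following the first author's treatment of Vinogradov-type systems over ellipsephic sets, one writes $x\in\calE_p$ in terms of its base-$p$ digits, expands $x^k$, and runs an efficient-congruencing (or decoupling) iteration in which, at each step, the $B_m$ Sidon property \eqref{0:additivity} of $\calD_p$ is used to bound the number of admissible digit configurations in a short interval. The bookkeeping identity $A(X)^m\asymp X$, valid precisely because $\calD_p$ is a $B_m$ Sidon set, converts the classical count of $\binom{k+1}{2}$ pairs of variables needed for degree $k$ into the count $m\binom{k+1}{2}=t_0$ here, and produces the bound
\begin{align*}
	\int_0^1 |f(\alpha)|^{2t_0}\d\alpha \ll A(X)^{2t_0}X^{-k+\eps},
\end{align*}
with $t_0=\tfrac12 mk(k+1)$. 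This is of main-term (hence critical) strength in the range of interest, and it accounts for the threshold $s\ge 2t_0=mk(k+1)$ in the statement.

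It then remains to supply the other two inputs. The Weyl-type estimate $\sup_{\alpha\in\frm}|f(\alpha)|\ll A(X)^{1-\vrho_0+\eps}$, with $\vrho_0=\vrho_0(k)$ as in \eqref{0.rho}, follows from Weyl differencing exploited through the digital structure of $\calE_p$, again along the lines of the first author's work, with the minimum in \eqref{0.rho} reflecting the choice between classical Weyl differencing and the stronger bound coming from Vinogradov's mean value theorem. For \eqref{T0:Akshat-cond2} one checks the distribution of $\calE_p$ in residue classes directly: for a prime-power modulus $q=p^a$ the count is governed by the last $a$ digits and equals $(\#\calD_p)^{-a}A(X)+O(1)$, or is $O(1)$; for $q$ coprime to $p$ one uses that $p$ has finite multiplicative order modulo $q$, so that the digit string breaks into blocks---here the hypothesis $0\in\calD_p$ allows padding out to complete blocks---and successive blocks contribute via a convolution on $\Z/q\Z$ which equidistributes geometrically fast; the general case follows by the Chinese remainder theorem, yielding a form of \eqref{T0:Akshat-cond2} with a power-saving error, which is all the framework needs. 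Finally, identifying $\calC^*_{\calE}(n)$ as a product of local densities over the completions of $\Q$ is a matter of unwinding the major-arc integral: the real place gives the usual singular integral, a prime $\ell\neq p$ contributes the density of solutions of \eqref{0.Waring-eq-2} over $\Z_\ell$ with the first $s$ variables restricted to the residue supports recorded by $\kap(\cdot,\cdot)$, and the place $p$ contributes the $p$-adic solution density in which the ellipsephic condition---membership of every base-$p$ digit in $\calD_p$---is imposed on the first $s$ variables.

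The main obstacle is the mean value: obtaining the clean estimate above with the exponent $t_0$ matching exactly the threshold in the statement, and---more delicately---compatibly with the near-extremal density hypothesis \eqref{0.ellips-cond} (which forces $\#\calD_p$ to be within a $\vrho_0$-dependent margin of the largest cardinality $\asymp p^{1/m}$ permitted for a $B_m$ Sidon subset of $\Z/p\Z$), requires pushing the ellipsephic congruencing iteration to its critical exponent while tracking precisely how the $B_m$ Sidon constant absorbs the losses incurred at each step; the margin in \eqref{0.ellips-cond} is exactly the slack needed to overcome the $n^{\eps}$ factors on the minor arcs even at the endpoint $s=2t_0$. A secondary difficulty is to show that the local product defining $\calC^*_{\calE}(n)$ is non-vanishing for the integers $n$ under consideration, since the factor at $p$ now depends on the arithmetic of the digit set $\calD_p$ rather than only on $p$ and $k$.
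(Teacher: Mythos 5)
Your overall skeleton (feed distribution, mean value and minor-arc inputs for $\calE_p$ into the general transfer machinery, with the extra variable $y$ contributing the classical sum $g$) matches the paper's route, which deduces the theorem from Corollary~\ref{cor-W2} (i.e.\ Theorem~\ref{T:Waring2}). But there is a genuine gap in how you propose to handle the minor arcs. You list as a required input a Weyl-type bound $\sup_{\alp\in\frm}|f(\alp)|\ll A(X)^{1-\vrho_0+\eps}$ for the \emph{ellipsephic} sum $f$ itself, and claim it ``follows from Weyl differencing exploited through the digital structure of $\calE_p$, along the lines of the first author's work.'' No such estimate is known, and it does not follow by Weyl differencing: differencing destroys the digital structure (differences of ellipsephic numbers are not ellipsephic), and the cited ellipsephic works \cite{biggs1,biggs2} prove mean values by efficient congruencing, not pointwise bounds. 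Moreover, the exponent $\vrho_0$ in \eqref{0.rho} is the saving for the \emph{complete} Weyl sum $g(\alp)=\sum_{y\le X}e(\alp y^k)$ (see \eqref{6.weyl}, from \cite[Theorem~2.1(b)]{BP2}); it has nothing to do with $\calE_p$. The whole point of working with $R_{s,1;k}$ rather than $R_{s;k}$ is precisely that no Weyl hypothesis on $\calA=\calE_p$ is needed: Theorem~\ref{T:Waring2} requires only \eqref{D} and \eqref{V1} for $\calE_p$, with the pointwise minor-arc saving supplied entirely by the unrestricted variable. Your claimed input should simply be deleted rather than ``supplied''; as stated it is an unproved (and implausible) assertion on which your minor-arc estimate partially rests.

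A second, related inaccuracy concerns the mean value and the role of \eqref{0.ellips-cond}. The available ellipsephic estimate \cite[Theorem~1.2]{biggs2} does not give the clean bound $\int_0^1|f|^{2t_0}\,\d\alp\ll A(X)^{2t_0}X^{-k+\eps}$; it gives \eqref{V1} with a genuine loss $\Del(X)=X^{\eps}\bigl(X/A(X)^m\bigr)^{k(k+1)/2}\ll A(X)^{\eps+\frac{k(k+1)}{2}(\lam^{-1}-m)}$, and the specific margin $2\vrho_0/\bigl(5k(k+1)\bigr)$ in \eqref{0.ellips-cond} is exactly what makes this power smaller than $\vrho_0/5$, which is the threshold demanded by Corollary~\ref{cor-W2} (the $1/5$ coming from the $Q^5$-type error in the major-arc approximation, i.e.\ from $Y_2$ in \eqref{1.Y2}). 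Your framing — a clean $X^\eps$-loss mean value, with the density margin needed only to beat $n^\eps$ factors at the endpoint — misattributes where the condition comes from; if $\Del$ really were $X^\eps$, any fixed density margin would do. Two smaller remarks: your block/convolution argument for the distribution of $\calE_p$ in residue classes coprime to $p$ is a reasonable (indeed more detailed) substitute for the paper's brief verification of \eqref{D} with $E(X)=O(1)$, $Q_D(X)=X$, though you should also record Hypothesis \eqref{C} (here via Lemma~\ref{L3.kap-mult}) to get the local-density interpretation; and the non-vanishing of $\calC^*_{\calE}(n)$, which you flag as a difficulty, is not claimed in the statement and need not be addressed.
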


In this result, the main ingredient is the result by the first author \cite{biggs1,biggs2} on Vinogradov's mean value theorem for ellipsephic numbers. An alternative approach, based on $\ell^2$-decoupling rather than efficient congruencing, is due to Chang et al.~\cite{Chang++}. As stated, our result holds only for such ellipsephic sets whose generating set $\calD_p$ is a $B_m$ Sidon set for $m$ very close to $1/\lam(\calE_p)$. Our methods yield results for smaller values of $m$ as well, but as they stand at the moment that would require more than just one supplementary variable that runs over the full set of integers. In a similar way, it is apparent from \cite{biggs1} and \cite{Chang++} in the case $k=2$ that the condition of being a $B_m$ Sidon set can be somewhat relaxed at the expense of incurring some losses in the bound, and those comparatively weaker bounds will again require additional supplementary variables.

The alert reader will have noticed that both Theorem~\ref{T0:Akshat} and~\ref{T0:Kirsti} are concerned with the more relaxed version of Waring's problem~\eqref{0.Waring-eq-2} in which one of the variables is allowed to run over the full set of integers. This is a consequence of the fact that estimates of Weyl type are usually not available for completely general sets. In such situations where bounds of this type are known, however, we do indeed obtain bounds on the counting function $R_{s;k}(n;\calA)$ associated with the original Waring problem over thin sets. 

Similarly, in both of the results discussed so far, the underlying sets have excellent distribution properties over arithmetic progressions. This is not the case for all numbers, and a notorious example of a set that is comparatively poorly understood along arithmetic progressions is that of the primes. Our last teaser result therefore concerns the set of primes. Whilst our result here is essentially identical to what is obtained by feeding recent progress on Vinogradov mean values into the same strategy employed by Hua \cite{Hua} (see also recent work by Talmage \cite{Talmage}), it can nonetheless serve as an illustration that our theorems accommodate even the notoriously weak error terms that arise in the prime setting. 
\begin{theorem}\label{T0:Goldbach}
	Let $\calP$ denote the set of primes, and assume that 
	$$
		s \ge k(k-1) + 2\lfloor \sqrt{2k+1} \rfloor + 1.
	$$ 
	Under such circumstances we have 
	\begin{align*}
		R_{s;k}(n;\calP) = \frac{\Gam(1/k)^s}{\Gam(s/k)} \frS_{\calP}(n) \frac{n^{s/k-1}}{(\log n)^s} (1 + o(1)),
	\end{align*}
	and the singular series $\frS_{\calP}(n)$ is absolutely convergent.	
\end{theorem}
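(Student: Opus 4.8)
The plan is to obtain Theorem~\ref{T0:Goldbach} as an application of the general result of the next section to the set $\calA=\calP$, so that the work reduces to verifying its hypotheses for the primes and to reading off the main term; I would also flag where the prime case differs from Theorems~\ref{T0:Akshat} and~\ref{T0:Kirsti}. Two such differences matter. First, the distribution condition~\eqref{T0:Akshat-cond2} holds only in a relaxed form: one has $\kap(q,b)=\varphi(q)^{-1}$ when $(b,q)=1$ and $\kap(q,b)=0$ otherwise, but with the benign error $O(1)$ replaced by the Siegel--Walfisz term $O\big(X\exp(-c\sqrt{\log X})\big)$, so I would invoke the version of the master theorem that tolerates such errors. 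Secondly, Vinogradov's classical estimate for exponential sums over primes is a genuine Weyl-type bound for $\sum_{p\le P}e(\alpha p^k)$, so no auxiliary variable ranging over all of $\N$ is needed and one controls $R_{s;k}(n;\calP)$ itself rather than a relaxed counting function. The mean-value hypothesis I would supply exactly as in case~\eqref{T0:Akshat-VMVT} of Theorem~\ref{T0:Akshat}: since $\calP\subseteq\N$, the $2t$-th moment of the prime exponential sum is dominated by the same moment over the integers up to $P$, and the latter is $\ll P^{2t-k+\eps}$ for $2t=k(k-1)+2\lfloor\sqrt{2k+1}\rfloor$ by the sharp Vinogradov mean value theorem~\cite{BDG,W:NEC}; it is this threshold that dictates the hypothesis $s\ge 2t+1$.

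When unpacked, the argument behind the master theorem is the circle method of Hua~\cite{Hua}. I would set $P=n^{1/k}$, write $e(z)=e^{2\pi i z}$, and let $f(\alpha)=\sum_{p\le P}(\log p)\,e(\alpha p^k)$, so that $\int_0^1 f(\alpha)^s e(-\alpha n)\,d\alpha$ is a logarithmically weighted form of $R_{s;k}(n;\calP)$, with expected main term $\Gam(1+1/k)^s\Gam(s/k)^{-1}\frS_{\calP}(n)n^{s/k-1}$; removing the weights by partial summation at the end multiplies this by $\big(k/\log n\big)^s$ and hence delivers precisely the constant $\Gam(1/k)^s/\Gam(s/k)$ and the factor $(\log n)^{-s}$ of the theorem. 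I would dissect $[0,1)$ into major arcs $\frM(Q)$---the ranges $|\alpha-a/q|\le QP^{-k}$ with $(a,q)=1$ and $q\le Q$---and minor arcs, with $Q$ a suitable power of $\log n$. On $\frM(Q)$ the prime number theorem in arithmetic progressions gives $f(a/q+\beta)=\varphi(q)^{-1}S(q,a)v(\beta)+O\big(P\exp(-c\sqrt{\log P})\big)$, where $S(q,a)=\msum{q}{r}e(ar^k/q)$ and $v(\beta)=\sum_{m\le P}e(\beta m^k)$; raising to the $s$-th power, integrating, and completing both the archimedean integral and the truncated singular series $\frS_{\calP}(n;Q)=\sum_{q\le Q}\varphi(q)^{-s}\msum{q}{a}S(q,a)^s e(-an/q)$ reproduces the main term above. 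The bound $|\varphi(q)^{-1}S(q,a)|\ll q^{-1/2+\eps}$ for squarefree $q$ (and an acceptable bound for general $q$), together with multiplicativity of $q\mapsto\varphi(q)^{-s}S(q,a)^s$, makes the completed Euler product $\frS_{\calP}(n)=\prod_p\sigma_p(n)$ absolutely convergent already for $s\ge 5$---in particular for every $s$ in the hypothesis---and a routine local analysis, automatic for $s$ this large, gives $\frS_{\calP}(n)\gg 1$, so that the formula is a genuine asymptotic.

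For the minor arcs I would combine Dirichlet's theorem with Vinogradov's estimate to obtain $\sup_{\alpha\in\frm}|f(\alpha)|\ll P^{1-\sigma+\eps}$ for some admissible $\sigma=\sigma(k)>0$, and then split $\int_{\frm}|f(\alpha)|^s\,d\alpha\le\big(\sup_{\frm}|f|\big)^{s-2t}\int_0^1|f(\alpha)|^{2t}\,d\alpha\ll P^{s-k-\sigma(s-2t)+\eps}$ using the value of $2t$ fixed above. This is $o\big(P^{s-k}(\log n)^{-s}\big)$ precisely when $s\ge 2t+1=k(k-1)+2\lfloor\sqrt{2k+1}\rfloor+1$, which is the hypothesis; adding the major- and minor-arc contributions and undoing the logarithmic weights then yields the theorem.

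The hard part---and the only point at which the prime case is genuinely more delicate than the integer setting of case~\eqref{T0:Akshat-VMVT} of Theorem~\ref{T0:Akshat}---is the tension between the weak Siegel--Walfisz error and the truncation level $Q$. Because the major-arc approximation is accurate only up to a power of $\log n$, one cannot take $Q$ larger than a fixed power of $\log n$; the minor arcs are then correspondingly wide, and I would have to check that the saving $P^{-\sigma(s-2t)+\eps}$ on $\frm$ still beats the required gain $(\log n)^{-s}$ for an appropriate choice of the exponent of $\log n$ in $Q$. This bookkeeping is exactly the scenario of very weak error terms in the distribution hypothesis that the framework of the next section is built to absorb; once it is carried out, the remainder of the argument runs in parallel with the integer treatment.
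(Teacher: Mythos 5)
Your overall route (verify the distribution, mean value and Weyl-type hypotheses for $\calP$ and run a Hua-style dissection with $Q$ a power of $\log n$) is the same as the paper's, but your minor-arc step contains a genuine error. You claim that on the minor arcs $\frm(Q)$ with $Q=(\log n)^{B}$ one has $\sup_{\alpha\in\frm}|f(\alpha)|\ll P^{1-\sigma+\eps}$ for some $\sigma(k)>0$. No such power-of-$P$ saving holds on arcs of this shape: power savings for prime Weyl sums require the denominator of the rational approximation to be at least a fixed power of $P$, whereas $\frm(Q)$ contains points with $q=1$ and $|\beta|$ just beyond $QP^{-k}$, where $|f(\alpha)|\asymp P\,|w(\beta P^{k})|/\log P\gg P(\log P)^{-B/k-1}$, which is far larger than $P^{1-\sigma}$. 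Consequently your bound $\int_{\frm}|f|^{s}\ll P^{s-k-\sigma(s-2t)+\eps}$, and with it your justification of the threshold $s\ge 2t+1$, does not stand. The correct version (Harman's estimate, as used in the paper via Hypothesis~\eqref{W1} with $\vrho=4^{1-k}$, $L=(\log X)^{c}$, or the classical Vinogradov bound) saves only a power of $\log n$, namely $(\log n)^{-B\vrho+c}$ per excess variable; the argument is rescued not by a power saving but by choosing $B$ large enough that this logarithmic saving beats the fixed logarithmic losses, i.e.\ the factor $\Delta(X)=(\log X)^{2t_0}$ coming from comparing $I_{t_0,k}(X;\calP)$ with $A(X)^{2t_0}X^{-k}$ together with the target precision $(\log n)^{-s}$. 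One excess variable then suffices, which is exactly how the paper obtains $s\ge 2t_0+1=k(k-1)+2\lfloor\sqrt{2k+1}\rfloor+1$ through Proposition~\ref{prop-minor} and condition~\eqref{1.sig} with $Y(X)=(\log X)^{B}$, $B>4^{k}(t_0+c(k))$.

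A secondary point: your claim that ``a routine local analysis, automatic for $s$ this large, gives $\frS_{\calP}(n)\gg 1$'' is false in general. For primes $p$ with $(p-1)\mid k$ one has $r^{k}\equiv 1\pmod p$ for all $(r,p)=1$, so the local factor vanishes unless $n\equiv s\pmod p$; this is the familiar Waring--Goldbach congruence condition, and it is precisely why the theorem asserts only absolute convergence of $\frS_{\calP}(n)$, not positivity. Your identification of the archimedean constant (log weights, partial summation, $\Gam(1+1/k)^{s}k^{s}=\Gam(1/k)^{s}$) and your use of the integer Vinogradov mean value to supply Hypothesis~\eqref{V1} with $t_0=\tfrac12 k(k-1)+\lfloor\sqrt{2k+1}\rfloor$ do agree with the paper; the paper differs only in working with the unweighted prime counting function approximated by $\li$ inside its general framework rather than with $\log p$ weights, which is an inessential variation.
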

The singular series is the same as that known in the Waring--Goldbach theorem and is given explicitly for instance in \cite[Satz 11]{Hua}.
	
Our methods also apply to systems of equations, including questions on asymptotic formul{\ae} for mean values of Vinogradov shape. In fact, although we do not go into details in this work, a very straightforward application of our methods yields an improved version of Theorem~1.1 and its corollaries of the second author's previous work with Parsell \cite{BP1}, as well as a much simplified treatment of the singular series in work of the second author and Wooley \cite{BW2}. \smallskip
	
The main technical inputs in our results are two. On the one hand, we take inspiration from an idea going back to Heath-Brown and Skorobogatov \cite{HB-Skoro} in combination with methods commonly applied in the study of Diophantine equations of general shape to establish convergence of both the singular integral and the singular series using only a mean value estimate and a bound of Weyl type. On the other hand, we show that for any (weighted) set $\calA$ there exists a suitable continuous and almost everywhere differentiable function $\Psi$ approximating the counting function $A(X)$ in terms of which the function $R_{s;k}(n;\calA)$ can be measured. When some established approximating function $\Psi(X)$ of $A(X)$ is already available, it can be used, but for more general sets $\calA$ the function $\Psi$ we choose is in fact surprisingly simple. It turns out that the methods are fairly flexible in terms of the function $\Psi$, whose explicit shape will ultimately only inform the precise expression for the singular integral. Naturally, the interpretation of the singular integral as a density of solutions over the real unit cube needs to be adapted to reflect the distribution of $\Psi$ over the available range of integers. Nonetheless, we are able to show that such an interpretation is justified even in the very general setting under consideration. 
	
We are aware that neither of these two technical features are entirely novel in themselves, although the idea of \cite{HB-Skoro} has to our knowledge never been applied to the singular series, and neither are we aware of a treatment that involves near-arbitrary continuous approximations of the counting functions. Rather, the purpose of this memoir is to show that these two features are sufficient to generalise the circle method to ground sets of considerable generality, as long as some distribution properties over residue classes are satisfied, and ultimately allows us to establish results like the very general one of Theorem~\ref{T0:Akshat}.
\medskip

\textbf{Acknowledgments.} We are very grateful to J\"org Br\"udern, Olivier Robert and Igor Shparlinski for valuable discussions and suggestions that improved the paper. 
	
During the production of this manuscript, the first author was supported by postdoctoral grant no.~2018.0362 of the Knut and Alice Wallenberg Foundation, in the name of the second author. In addition, the second author held a Starting Grant of the Swedish Science Foundation (Vetenskapsr{\aa}det) under grant agreement no.~2017-05110, as well as, in the final stages, Project Grant no.~2022-03717 of the same sponsor. Part of the work on the manuscript was done while the second author was in residence at the Max Planck Institute of Mathematics in Bonn, whose generous support and excellent working conditions are also gratefully acknowledged.
	
\vfill

\section{Statement of general results}

\textbf{Notation.}  We use vector notation liberally in a manner that is easily discerned from the context. In particular, when $\bb$ denotes the integer tuple $(b_1, \dots, b_n)$, we write $(q, \bb) = \gcd(q, b_1, \dots, b_n)$.
Throughout, the letter $\eps$ will be used to denote an arbitrary, but sufficiently small,  positive number, and we adopt the convention that whenever it appears in a statement, we assert that the statement holds for all sufficiently small $\eps>0$. Usually, we take $X$ to be a large positive number which, just like the implicit constants in the notations of Landau and Vinogradov, is permitted to depend at most on $s$, $k$ (or the system $\bphi$ to be introduced in Section~\ref{subs: MVs} below), and $\eps$. We employ the non-standard notation $f(X) \llcurly g(X)$ to mean $f(X) = o(g(X))$.
	
We also take the opportunity to refine some of the notation introduced in the introduction. One feature is that we allow the set $\calA$ to be weighted. Let $(a_n)_{n \in \N}$ denote a sequence of non-negative weights summing to infinity. We then refer to the sequence $(a_n)$ and the weighted set $\calA$ interchangeably. Put $\calA(X) = (a_n)_{1 \le n \le X}$ and $A(X) = \sum_{n \le X}a_n$. We abbreviate $A(X)=A$ whenever there is no danger of confusion. Occasionally, we may use the $\ell^p$-norm of $(a_n)$ on the interval $[1,X]$, which then is denoted by $\|a\|_{\ell^p(X)}$. Note that in this notation we have $A(X) = \|a\|_{\ell^1(X)}$.
	
It will be useful on occasion to retain notation for the unweighted analogue of the weighted set $\calA$. When this necessity arises, we denote the set with a superscript circle, so that   
$$
	\calA^\circ = \supp(a_n) = \{n \in \N: a_n >0\}.
$$

\subsection{Results on Waring's problem}
For $n \in \N$ define the function
\begin{align*}
	R_{s;k}(n; \calA) = \sum_{\bx \in \N^s} a_{x_1} \cdots a_{x_s} \mathbbm{1}_{[x_1^k + \ldots + x_s^k=n]},
\end{align*}
where $\mathbbm{1}_{[x=y]}$ takes the value $1$ when $x=y$, and $0$ otherwise. 
Thus, $R_{s;k}(n; \calA)$ counts the number of solutions $\bx \in \calA$ to the equation 
\begin{align}\label{1.waringeq}
	x_1^k + \ldots + x_s^k =n,
\end{align}
where each variable $x_i$ is counted with the weight $a_{x_i}$. In particular, when $(a_n)$ is the indicator function on a subset $\calA^\circ \subseteq \N$, then $R_{s;k}(n; \calA)$ describes the number of ways in which $n$ may be written as a sum of $s$ positive $k$-th powers of elements in $\calA^\circ$. 

As is usual in problems of this flavour, we approach the question of determining $R_{s;k}(n;\calA)$ by the circle method. Set $\T = \R /\Z$, and for $\alp \in \T$ define the exponential sum 
$$
	f_k(\alp)=f_{k, \calA}(\alp;X) = \sum_{x \le X}a_x e(\alp x^k),
$$
as well as the mean value 
$$
	I_{t,k}(X; \calA) = \int_\T |f_{k, \calA}(\alp; X)|^{2t} \d \alp.
$$
\begin{definition}\label{def-V1}
	We say that the exponential sum $f_{k,\calA}(\alp;X)$ satisfies Hypothesis~\eqref{V1} with parameters $t_0$ and $\Del = \Del(X)$ if 
	\begin{align}\label{V1}\tag{V'}
		I_{t,k}(X;\calA) \ll A(X)^{2t}X^{-k}\Del(X) \qquad \text{for all $t \ge t_0$}.
	\end{align}
\end{definition}
Note that Property~\eqref{V1} is always true when $\Del = X^k$. On the other hand, when $\|a\|_{\ell^2(X)}^2 \asymp X$, we know as a consequence of recent progress on Vinogradov's mean value theorem \cite{BDG,W:NEC} that Property~\eqref{V1} is satisfied for $\Del = X^\eps$ and $t_0 = k(k+1)/2$. 

We also need pointwise information on the sum $f_{k,\calA}(\alp;X)$. For this purpose, define the major arcs 
\begin{align*}
	\frM_X(Q) = \bigcup_{q \le Q} \{ \alp \in \T: \|\alp q\| \le QX^{-k}\},
\end{align*}
and the corresponding minor arcs $\frm_X(Q) = \T \setminus \frM_X(Q)$. Just like in our other notational conventions, we will drop the subscript $X$ in most situations, keeping it only when it seems necessary to avoid misunderstandings. 
\begin{definition}\label{def-W1}
	We say that the exponential sum $f_{k,\calA}(\alp;X)$ satisfies Hypothesis~\eqref{W1} with parameters $\vrho \ge 0$, $Q_W = Q_W(X)$ and $L = L(X) \ll X^\eps$ when for all $Q \le Q_W$ we have 
	\begin{align}\label{W1}\tag{W'}
	\sup_{\alp \in \frm_X(Q)} |f_{k, \calA}(\alp;X)| \ll A(X) Q^{-\vrho} L(X).
	\end{align}
\end{definition}
Practitioners of the circle method will easily recognise bounds of the shape~\eqref{V1} and~\eqref{W1} as the main ingredients for a minor arc analysis in Waring's problem. We note here that Hypothesis~\eqref{W1} is trivial unless the function $Q_W$ is an increasing function tending to infinity. 

In order to understand the major arcs, we also need information on how the set $\calA$ is distributed over residue classes modulo $q$, for a suitable range of $q$. 
For $b \in \{0,1,\ldots,q-1\}$ put 
$$
	A(q,b;X)= \sum_{\substack{n \le X \\ n \equiv b \mmod q}}a_n.
$$
We then need the following distribution property.
\begin{definition}\label{def-E}
	We refer to a set $\calA$ as distributed to the level $Q_D = Q_D(X)$ when for each $q \le Q_D$ and $b \mmod q$ there is a number $\kap(q,b) \ge 0$ such that the asymptotic formula 
	\begin{align}\label{D}\tag{D}
		A(q,b;X)= 
		\kap(q,b) A(X)+ O(E(X)) 
	\end{align}
	is satisfied with an error term $E(X) = o(A(X))$. 
\end{definition}
Again, we may assume without loss of generality that $Q_D$ is a continuously increasing unbounded function. 
Our condition~\eqref{D} is reminiscent of its namesake condition in \cite{B}, where it was the main requirement on the sets under investigation. Our formulation is somewhat more permissive than Br\"udern's in that we allow weights and the set $\calA$ is not required to be of positive density.

For future reference we note a simple additivity property of the functions $\kappa(q,b)$. Obviously, for $q,q' \in \N$ and $b \in \Z/q\Z$ one has 
\begin{align*}
	A(q,b;X)=\sum_{\substack{c \mmod{qq'} \\ c \equiv b \mmod q}}A(qq',c;X).
\end{align*}
Using Condition~\eqref{D} on either side, dividing by $A(X)$ and letting $X$ tend to infinity, we deduce that 
\begin{align}\label{2:kappa-add}
	\kappa(q,b) =\sum_{\substack{c \mmod{qq'} \\ c \equiv b \mmod q}} \kappa(qq',c).
\end{align} 
In particular, we necessarily have 
\begin{align}\label{2:kappa-add-1}
	\sum_{b \mmod q}\kap(q,b)=1.
\end{align}

We are now prepared to state our first main result. Define 
\begin{align}\label{1.Y}
	Y(X)=\min\left\{Q_D(X), Q_W(X), \left(\frac{A(X)}{E(X)}\right)^{1/5}, X^{1/5} \right\},
\end{align}
then we have the following Waring-type result. 
\begin{theorem}\label{T:Waring}
	Suppose that $\calA$ is such that the Hypotheses~\eqref{D}, \eqref{V1} and~\eqref{W1} are satisfied with parameters $t_0$, $\Del$, $\vrho$, $Q_D$, $Q_W$ and $L$, and assume that $L(X) \ll Y(X)^{\vrho-\del}$ for some $\del>0$.  Take $\sig_0$ minimal such that for all $\eps>0$ one has 
	\begin{align}\label{1.sig}
		\Del(Z)  \ll \left(\frac{Y(Z)^{\vrho}}{ L(Z)}\right)^{\sig_0 + \eps}
	\end{align}
	uniformly in $Z$. Then for any integer $s > 2t_0+\sig_0$ and any $n \in \N$ we have an asymptotic formula of the shape 
	\begin{align}\label{1.warasymp}
		R_{s;k}(n; \calA) = A(n^{1/k})^sn^{-1} (\calC_{\calA}(n) + o(1))
	\end{align}
	for a non-negative constant $\calC_{\calA}(n)$. In particular, we have $\Gtil_{\calA}(k) \le \lfloor2t_0 + \sig_0\rfloor + 1$. 
\end{theorem}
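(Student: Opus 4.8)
The plan is to run the Hardy--Littlewood circle method in its classical shape, feeding in the three hypotheses at every turn. Fix $X=\lfloor n^{1/k}\rfloor$, so that every tuple contributing to $R_{s;k}(n;\calA)$ has all coordinates at most $X$ and hence
\[
	R_{s;k}(n;\calA)=\int_{\T}f_{k,\calA}(\alp;X)^{s}\,e(-n\alp)\,\d\alp .
\]
I would dissect $\T=\frM_{X}(Q_{0})\cup\frm_{X}(Q_{0})$ with $Q_{0}=Y(X)$, the point of the fourfold minimum in~\eqref{1.Y} being that the factors $Q_{W}$ and $Q_{D}$ make~\eqref{W1} and~\eqref{D} available on this range, while $X^{1/5}$ secures disjointness of the arcs and smallness of the dual scale $Q_{0}X^{-k}$, and $(A/E)^{1/5}$ bounds the cost of replacing residue-class counts by their main terms.

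For the minor arcs, since $s>2t_{0}+\sig_{0}$ (so that $s-2t_{0}>\sig_{0}\ge 0$), I would estimate
\[
	\Bigl|\int_{\frm_{X}(Q_{0})}f_{k}(\alp)^{s}\,e(-n\alp)\,\d\alp\Bigr|
	\le\Bigl(\sup_{\alp\in\frm_{X}(Q_{0})}\abs{f_{k}(\alp)}\Bigr)^{s-2t_{0}}I_{t_{0},k}(X;\calA),
\]
insert~\eqref{W1} at $Q=Q_{0}$ and~\eqref{V1} at $t=t_{0}$, and use the defining relation~\eqref{1.sig} of $\sig_{0}$ to bound the right-hand side by
\[
	\ll A^{s}X^{-k}\bigl(Q_{0}^{-\vrho}L\bigr)^{s-2t_{0}}\Del\ll A^{s}X^{-k}\bigl(Y^{\vrho}/L\bigr)^{\sig_{0}+\eps-(s-2t_{0})} .
\]
As $s$ is an integer strictly exceeding $2t_{0}+\sig_{0}$ there is room to take $\eps$ with $\sig_{0}+\eps-(s-2t_{0})<0$, and $Y^{\vrho}/L\gg Y^{\del}\to\infty$ because $L\ll Y^{\vrho-\del}$; hence the minor arcs contribute $o(A^{s}n^{-1})$, which is what one needs.

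On the major arcs I would first prune down to narrower arcs $\frM_{X}(Q_{1})$ for a smaller parameter $Q_{1}$: on $\frM_{X}(Q_{0})\setminus\frM_{X}(Q_{1})$ one has $\alp\in\frm_{X}(Q_{1})$, so a repetition of the minor-arc estimate with $Q=Q_{1}$ disposes of this annulus. On an arc $\alp=a/q+\bet$ with $q\le Q_{1}$, $(a,q)=1$, $\abs{\bet}\le Q_{1}X^{-k}$, I would split $f_{k}$ into residue classes modulo $q$ and apply partial summation together with~\eqref{D} to obtain
\[
	f_{k}\Bigl(\tfrac{a}{q}+\bet\Bigr)=S(q,a)\,v(\bet)+(\text{error}),
	\qquad
	S(q,a)=\sum_{b\mmod q}\kap(q,b)\,e\!\left(\tfrac{ab^{k}}{q}\right),
\]
where $v(\bet)=\int_{0}^{X}e(\bet\gam^{k})\,\d\Psi(\gam)$ for a continuous, almost everywhere differentiable $\Psi$ with $\Psi(X)=A(X)$ approximating the counting function $A$; any reasonable smoothed interpolant of the step function $A$ should serve, and the specific choice surfaces only in the shape of the singular integral. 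Raising to the $s$th power, summing over $a$ and $q$, and rescaling $\gam=Xu$, $\bet=\tet X^{-k}$, the contribution of $\frM_{X}(Q_{1})$ should come out as $A^{s}n^{-1}\,\frS^{(Q_{1})}(n)\,\frI^{(Q_{1})}(n)+(\text{error})$, with $\frS^{(Q_{1})}(n)=\sum_{q\le Q_{1}}\sum_{(a,q)=1}S(q,a)^{s}e(-na/q)$ a truncated singular series and $\frI^{(Q_{1})}(n)$ a truncated singular integral attached to the density $\Psi'$; the accumulated errors — the $O(1)$ terms from~\eqref{D}, the replacement of $f_{k}(\bet)$ by $v(\bet)$, and the truncation of the $\bet$-range — are exactly what the fifth-power caps in~\eqref{1.Y} and the pruning step are arranged to absorb.

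It then remains to complete the sum and the integral, letting $Q_{1}\to\infty$ and extending the $\tet$-integral to $\R$ with total error $o(1)$, and to check that $\calC_{\calA}(n):=\frS_{\calA}(n)\,\frI_{\calA}(n)$ is a well-defined non-negative constant. This is the step I expect to be the crux, and the place to borrow the idea of Heath-Brown and Skorobogatov~\cite{HB-Skoro}: rather than demanding enough variables for $\frS_{\calA}$ and $\frI_{\calA}$ to converge absolutely in isolation, one controls the tails $\sum_{q>Q_{1}}$ and $\int_{\abs{\tet}>Q_{1}}$ by re-expressing them through complete exponential sums and integrals that are themselves governed, once more, by bounds of the type~\eqref{V1} and~\eqref{W1}, so that $s>2t_{0}+\sig_{0}$ again suffices (this is also why $\sig_{0}$, rather than a larger quantity, enters the final bound). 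Non-negativity of $\calC_{\calA}(n)$ is then read off from the interpretation of $\frS_{\calA}$ and $\frI_{\calA}$ as products of local solution densities — nonnegative by construction — built from the $\kap(q,b)$, legitimate by the additivity relations~\eqref{2:kappa-add}, and from $\Psi'$; it is in any case consistent with the trivial bound $R_{s;k}(n;\calA)\ge 0$. Finally, taking $s=\lfloor 2t_{0}+\sig_{0}\rfloor+1$, which certainly satisfies $s>2t_{0}+\sig_{0}$, yields $\Gtil_{\calA}(k)\le\lfloor 2t_{0}+\sig_{0}\rfloor+1$. The genuine obstacle throughout is the major-arc bookkeeping — above all the passage from $f_{k}$ to its continuous model $v$ — which must be pushed through for an essentially arbitrary weighted set, keeping every error term uniformly $o(A^{s}n^{-1})$.
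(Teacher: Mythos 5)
Your proposal is correct and follows essentially the same route as the paper: minor arcs handled by combining the sup-bound~\eqref{W1} with the mean value~\eqref{V1} and the defining property of $\sig_0$, major arcs treated by sorting into residue classes and using~\eqref{D} with a continuous interpolant $\Psi$ of $A$ (so that $f$ is replaced by $S(q,a)v(\bet)$), and the completion of the singular series and singular integral carried out in the Heath-Brown--Skorobogatov spirit by bounding the complete sums and integrals through~\eqref{V1} and~\eqref{W1} at an auxiliary scale, exactly as in Sections~\ref{S3}--\ref{S6}. Your choice of working on $\frM_X(Q_1)$ with $Q_1$ strictly below $Y$ after pruning corresponds to the paper's selection of $Q$ with $\Del^{1/(\sig_0\vrho)}L^{1/\vrho}\llcurly Q\llcurly Y$ in Proposition~\ref{prop-minor}.
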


In many cases the constant $\calC_{\calA}(n)$ may be analysed further. The relevant condition is the following. 
\begin{definition}\label{def-C}
	Suppose that the set $\calA$ is distributed to the level $Q_D = Q_D(X)$. We say that $\calA$ satisfies Hypothesis (C) if the coefficients $\kap(q,b)$ satisfy 
	\begin{align}\label{C}\tag{C}
		\kap(qq',qb'+q'b)=\kap(q,q'b)\kap(q',qb') \qquad \text{for all $ (q,q')=1$}.
	\end{align}
\end{definition}

We will see in the proof that if \eqref{C} holds, the constant $\calC$ can be written as a product over local factors in the shape 
$$
	\calC_{\calA}(n) = \chi_{\infty}(n;\calA) \prod_{p \text{ prime}} \chi_p(n;\calA).
$$
For the interpretation of these factors, we observe that the distribution property~\eqref{D}, and in particular \eqref{2:kappa-add}, implies that the set $\calA$ can be embedded in the $p$-adic numbers $\Z_p$ for each $p$. In the special case when for all $q$ the set $\calA^\circ$ is equidistributed over a specified set $\calA_q \subseteq \Z/q\Z$ of admissible residue classes modulo $q$, the inverse limit $\varprojlim \calA_{p^h}$ produces a $p$-adic Cantor set $\Z_p(\calA)$. Under such circumstances, each factor $\chi_p(n;\calA)$ denotes the density of solutions of~\eqref{1.waringeq} as the variables $x_i$ range over the set $\Z_p(\calA)$. In particular, if $\calA^\circ$ is equidistributed over arithmetic progressions in general, this factor is identical to the one occurring in the integral case $\calA=\N$.

In order to understand the factor at infinity, let $\Psi$ be a continuous, piecewise differentiable, increasing function satisfying $\Psi(X) - A(X) \ll E(X)$, where the error $E(X)$ is the same as in Hypothesis~\eqref{D}. For many sets $\calA$, an asymptotic formula of this shape is known, but we will show in Section~\ref{S3} below how such a function can be constructed for any set $\calA$ satisfying condition~\eqref{D}. With this function at hand, we can define the $\calA$-weighted measure $\sig_{\calA}$ on $[0,1]^s$ by setting 
\begin{align*}
	\d \sig_{\calA}(\bxi) = \prod_{i=1}^s  \frac{n^{1/k}}{\Psi(n^{1/k})}\d \Psi(n^{1/k}\xi_i),
\end{align*}
where the differential is to be understood in the Riemann--Stieltjes sense. This measure is a smoothed-out version of the point measure on $\calA(n^{1/k})$, rescaled to be a probability measure on the unit hypercube. In this notation, the factor at infinity can be shown to be of the shape 
\begin{align*}
	\chi_{\infty}(n;\calA) = \int_\R \int_{[0,1]^s} e(\alp (\xi_1^k + \ldots + \xi_s^k - 1)) \d \sig_{\calA}(\bxi) \d \alp,
\end{align*}
and consequently has an interpretation as a weighted solution density of the equation $\xi_1^k + \ldots + \xi_s^k = 1$ in the unit cube, with the measure reflecting the distribution of the set $\calA(n^{1/k})$ inside $[1,n^{1/k}]$. When the function $\Psi$ is sufficiently well-behaved, this factor can by standard arguments be expressed in terms of Gamma functions, but we do not require that here.

As adumbrated above, Theorem~\ref{T:Waring} shows that, indeed, a version of Hua's lemma and a pointwise bound of Weyl type are sufficient to control the contribution from the major arcs. In particular, we do not require any of the delicate bounds on complete exponential sums that are commonly used in arguments of this flavour, or even a good understanding of the asymptotic behaviour of $A(X)$. The fact that the only input we require for this result, apart from the standard tools for a minor arc analysis (which, however, may be highly non-trivial in themselves), is the requirement that the set $\calA$ be suitably distributed over a permissible set of residue classes substantiates the received folklore that the minor arc analysis really does present the bottleneck for most circle method applications. 
Since some of the hypotheses of the theorem and in particular condition~\eqref{1.sig} may be somewhat hard to parse, it may be useful for the reader to state the shape Theorem~\ref{T:Waring} will take in a typical situation. 

\begin{corollary}\label{cor-W}
	Let $\calA$ be a set with positive logarithmic lower density, and suppose that it satisfies the assumptions of Theorem~\ref{T:Waring} with $E(X)\ll 1$, $Q_D(X) \asymp Q_W(X) \asymp X$, as well as $\Del(X) \ll \min(X,A(X))^\ome$ for some $\ome \ge 0$. Then~\eqref{1.warasymp} is satisfied whenever $\sig_0 \ge 5 \ome /  \vrho$. In particular, when $\ome < \vrho/5$, then~\eqref{1.warasymp} holds with $s = 2t_0+1$.  
\end{corollary}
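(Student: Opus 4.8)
The plan is to obtain Corollary~\ref{cor-W} as a direct specialisation of Theorem~\ref{T:Waring}: one evaluates the quantity $Y(X)$ under the stated hypotheses, checks the standing assumption $L(X) \ll Y(X)^{\vrho-\del}$, and verifies that the defining inequality~\eqref{1.sig} holds with $\sig_0 = 5\ome/\vrho$. Throughout we may assume $\vrho>0$, since otherwise the quantities in the statement are undefined.

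First I would use the positive logarithmic lower density to write $A(X) \gg X^{\lam-\eps}$ with $\lam=\lam(\calA)>0$. Together with $E(X)\ll 1$ and $Q_D(X)\asymp Q_W(X)\asymp X$, each of the first three entries in the definition of $Y(X)$ is $\gg \min(X,A(X))^{1/5}$, while the fourth is $X^{1/5}$; hence $Y(X)\gg\min(X,A(X))^{1/5}\gg X^{c'}$ for some $c'=c'(\lam)>0$, and also $Y(X)\le X^{1/5}$, so in particular $Y(X)\to\infty$. Since Hypothesis~\eqref{W1} forces $L(X)\ll X^\eps$ and $X\ll Y(X)^{1/c'}$, for every $\eps'>0$ we may take $\eps$ small enough that $L(X)\ll Y(X)^{\eps'}$. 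Choosing $\del=\vrho/2$ and $\eps'$ small confirms $L(X)\ll Y(X)^{\vrho-\del}$, and it also permits replacing $Y(Z)^\vrho/L(Z)$ by $Y(Z)^{\vrho-\eps'}$, up to constants, in what follows.

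The main computation is then short. The hypothesis $\Del(X)\ll\min(X,A(X))^\ome$ combined with $Y(X)\gg\min(X,A(X))^{1/5}$ gives $\Del(X)\ll Y(X)^{5\ome}$. To verify~\eqref{1.sig} with $\sig_0=5\ome/\vrho$, let $\eta>0$ be the quantifier appearing there; using $Y(Z)^\vrho/L(Z)\gg Y(Z)^{\vrho-\eps'}$ and $Y(Z)\to\infty$, it suffices to pick $\eps'\le\vrho\eta/(\sig_0+\eta)$, which yields $(\vrho-\eps')(\sig_0+\eta)\ge\vrho\sig_0=5\ome$ and hence $(Y(Z)^\vrho/L(Z))^{\sig_0+\eta}\gg Y(Z)^{5\ome}\gg\Del(Z)$ uniformly in $Z$. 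Thus $\sig_0=5\ome/\vrho$ is admissible, so the minimal value in Theorem~\ref{T:Waring} is at most $5\ome/\vrho$, and the theorem gives~\eqref{1.warasymp} for every integer $s>2t_0+5\ome/\vrho$. When moreover $\ome<\vrho/5$ we have $5\ome/\vrho<1$, so $2t_0+1>2t_0+\sig_0$ and the integer $s=2t_0+1$ is permissible, which is the final claim.

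I do not expect any genuine obstacle: the argument is entirely bookkeeping around $Y(X)$. The only point requiring mild care is the ordering of the two small parameters — the $\eps$ implicit in $L(X)\ll X^\eps$ and the universally quantified $\eta$ in the definition of $\sig_0$ — which must be handled with $\eps$ (equivalently $\eps'$) selected after $\eta$.
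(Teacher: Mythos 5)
Your proposal is correct and is essentially the intended derivation: the paper states Corollary~\ref{cor-W} without a separate proof precisely because it follows by the bookkeeping you carry out — under the stated hypotheses $Y(X)\gg\min(X,A(X))^{1/5}\gg X^{c'}$, so $L(X)\ll Y(X)^{\eps'}$ for every $\eps'>0$, the hypothesis $L\ll Y^{\vrho-\del}$ holds, and $\Del\ll Y^{5\ome}$ makes $\sig_0=5\ome/\vrho$ admissible in~\eqref{1.sig}, whence Theorem~\ref{T:Waring} gives~\eqref{1.warasymp} for $s>2t_0+5\ome/\vrho$, and in particular for $s=2t_0+1$ when $\ome<\vrho/5$. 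Your care in choosing the small parameter in $L(X)\ll X^\eps$ after the quantified $\eps$ in~\eqref{1.sig} is exactly the right point to watch, and your reduction to $\vrho>0$ is consistent with the theorem's standing hypothesis $L(X)\ll Y(X)^{\vrho-\del}$.
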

In this formulation it becomes clear that, should  Property~\eqref{V1} be known with $\Del \ll X^\eps$, then any power saving in the Weyl bound will lead to the expected asymptotic formula, provided that the set $\calA$ is equidistributed over an admissible set of residue classes modulo each $q$. This reproduces the well-known behaviour of the case $\calA = \N$. In fact, a careful reading of the proof will show that under the hypotheses of Corollary~\ref{cor-W} the error term $o(1)$ in~\eqref{1.warasymp} can be replaced by a power-saving error term $O(n^{-\del})$ for some $\del>0$.

\medskip
\subsection{Waring's problem with additional variables}
It is not uncommon to find oneself in a situation where either no bound of Weyl type is available, or else it is too weak for the purposes at hand. In such situations, one typically makes recourse to a substitute by counting solutions to~\eqref{1.waringeq} in which a few of the variables are allowed to run over the full set $\N$ rather than a subset $\calA$. We define 
\begin{align*}
	R_{s,u;k}(n; \calA) =  \sum_{\substack{\bx \in \N^s \\ \by \in \N^u}} a_{x_1} \cdots a_{x_s} \mathbbm{1}_{[x_1^k + \ldots + x_s^k + y_1^k + \ldots + y_u^k=n]},
\end{align*} 
so that $R_{s,u;k}$ counts the number of solutions to the equation 
\begin{align}\label{1.waringeq2}
	x_1^k + \ldots + x_s^k + y_1^k + \ldots + y_u^k =n,
\end{align}
where $x_1, \ldots, x_s$ run over the set $\calA$ and are counted with the concomitant weights $a_{x_j}$, whereas $y_1, \ldots, y_u$ run over the positive integers and are counted with weight $1$. In such a situation, we put
\begin{align}\label{1.Y2}
	Y_2(X)=\min\left\{Q_D(X), \left(\frac{A(X)}{E(X)}\right)^{1/5}, X^{1/5} \right\},
\end{align}
and recall the definition of $\vrho_0$ from~\eqref{0.rho}. 
Our result is as follows.

\begin{theorem}\label{T:Waring2}
	Suppose that $\calA$ is such that Properties~\eqref{D} and~\eqref{V1}  are satisfied with parameters $t_0$, $\Del$, and $Q_D$. Assume that $Y_2(X)^{\vrho_0-\del} \gg \log X$ for some $\del>0$, and take $\sig_0$ minimally such that the relation
	\begin{align}\label{1.sig2}
		\Del(Z)  \ll \left(\frac{Y_2(Z)^{\vrho_0}}{\log Z}\right)^{\sig_0+\eps}
	\end{align}
	is satisfied uniformly in $Z$ for all $\eps>0$. Then for any integers $s \ge 2t_0$ and $u > \sig_0$, and for any $n \in \N$, we have an asymptotic formula of the shape 
	\begin{align}\label{1.warasymp2}
		R_{s,u;k}(n; \calA) = A(n^{1/k})^sn^{u/k-1} (\calC_{\calA}^*(n) + o(1)),
	\end{align}
	where $\calC_{\calA}^*(n)$ is a non-negative constant. If also \eqref{C} is satisfied, this constant has an interpretation in terms of a product of solution densities of the local analogues of~\eqref{1.waringeq2}. 
\end{theorem}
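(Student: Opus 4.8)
\emph{Proof plan.} The plan is to run the circle method as in the proof of Theorem~\ref{T:Waring}, but with the pointwise Hypothesis~\eqref{W1} for $f_{k,\calA}$ replaced throughout by the classical Weyl bound for the $u$ ancillary exponential sums; accordingly $Q_W$ disappears and the pruning parameter~\eqref{1.Y} is replaced by~\eqref{1.Y2}. Write $X=n^{1/k}$ and $g_k(\alp;X)=\sum_{y\le X}e(\alp y^k)$, so that by orthogonality
\begin{align*}
	R_{s,u;k}(n;\calA)=\int_\T f_{k,\calA}(\alp;X)^s\,g_k(\alp;X)^u\,e(-n\alp)\,\d\alp,
\end{align*}
and then I would fix $Q=Y_2(X)$ and split $\T=\frM_X(Q)\cup\frm_X(Q)$.

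\emph{Minor arcs.} A classical estimate of Weyl type gives $\sup_{\frm_X(Q)}|g_k(\alp;X)|\ll XQ^{-\vrho_0}\log X$, while~\eqref{V1} applied with $t=s/2\ge t_0$ gives $\int_\T|f_{k,\calA}(\alp;X)|^{s}\,\d\alp\ll A^sX^{-k}\Del(X)$, whence
\begin{align*}
	\biggl|\int_{\frm_X(Q)}f_{k,\calA}^s\,g_k^u\,e(-n\alp)\,\d\alp\biggr|\le\Bigl(\sup_{\frm_X(Q)}|g_k|\Bigr)^u\int_\T|f_{k,\calA}|^s\,\d\alp\ll A^sX^{u-k}\Bigl(\frac{\log X}{Q^{\vrho_0}}\Bigr)^u\Del(X).
\end{align*}
By~\eqref{1.sig2}, the choice $Q=Y_2(X)$ and the hypothesis $u>\sig_0$, the last expression is $\ll A^sX^{u-k}\bigl(\log X/Y_2(X)^{\vrho_0}\bigr)^{u-\sig_0-\eps}=o\bigl(A(n^{1/k})^sn^{u/k-1}\bigr)$, using $Y_2(X)^{\vrho_0-\del}\gg\log X$.

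\emph{Major arcs.} On the arc about $a/q$ with $q\le Q$ and $(a,q)=1$ one replaces $g_k(a/q+\bet;X)$ by $q^{-1}S_k(q,a)v_k(\bet)$, with $S_k(q,a)=\sum_{r\le q}e(ar^k/q)$ and $v_k(\bet)=\int_0^X e(\bet\gam^k)\,\d\gam$, and one replaces $f_{k,\calA}(a/q+\bet;X)$ by $S_k^*(q,a;\calA)w_k(\bet)$, with $S_k^*(q,a;\calA)=\sum_{b \mmod q}\kap(q,b)e(ab^k/q)$ and $w_k(\bet)=\int_0^X e(\bet\gam^k)\,\d\Psi(\gam)$, where $\Psi$ is the approximating function constructed in Section~\ref{S3}. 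The latter replacement rests on~\eqref{D} and partial summation, and since $Q\le\min\{(A/E)^{1/5},X^{1/5}\}$ the resulting error, after the $s+u$ factors are multiplied out together with the elementary pointwise bounds for $f_{k,\calA}$, $g_k$ and their approximants on $\frM_X(Q)$, is $o(A^sX^{u-k})$. This reduces the major-arc integral to $\frS^*(n,Q)\,\calJ^*(n,Q)+o(A^sX^{u-k})$, where
\begin{align*}
	\frS^*(n,Q)&=\sum_{q\le Q}\sum_{\substack{a=1\\(a,q)=1}}^{q}q^{-u}S_k^*(q,a;\calA)^s S_k(q,a)^u e(-an/q),\\
	\calJ^*(n,Q)&=\int_{|\bet|\le QX^{-k}}w_k(\bet)^s v_k(\bet)^u e(-n\bet)\,\d\bet.
\end{align*}

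\emph{The local data, and conclusion.} This last step carries the one genuinely new point, and is where I expect the real work. Hypothesis~\eqref{D} gives no pointwise control on the twisted Gauss sums $S_k^*(q,a;\calA)$ beyond $|S_k^*(q,a;\calA)|\le1$, nor on the Stieltjes transform $w_k$, so the classical passage to the limit $Q\to\infty$ through bounds on complete exponential sums and the decay of $v_k$ is unavailable. Following the device of Heath-Brown and Skorobogatov, one instead controls the tails of $\frS^*(n,\cdot)$ and $\calJ^*(n,\cdot)$ directly by the mean value~\eqref{V1}: since the major-arc approximation at $\bet=0$ reads $f_{k,\calA}(a/q;X)=A(X)S_k^*(q,a;\calA)+o(A(X))$ and the short arcs of radius $\asymp X^{-k}$ about distinct fractions $a/q$ with $q\le P$ are pairwise disjoint, the moment $\sum_{q\le P}\sum_{(a,q)=1}|S_k^*(q,a;\calA)|^{2t_0}$ is bounded, up to admissible errors, by $X^kA(X)^{-2t_0}\int_\T|f_{k,\calA}(\alp;X)|^{2t_0}\,\d\alp\ll\Del(X)$; combining this with the decay $v_k(\bet)\ll X(1+X^k|\bet|)^{-1/k}$ of the ancillary factor (which contributes a power saving $q^{-u/k+\eps}$ per dyadic block in $q$), with $|S_k^*(q,a;\calA)|^s\le|S_k^*(q,a;\calA)|^{2t_0}$ for $s\ge 2t_0$, and with~\eqref{1.sig2} and $u>\sig_0$ (so that $u/k>\vrho_0\sig_0$, as $\vrho_0\le1/k$), a dyadic summation — choosing the auxiliary scale $X$ large in each block — shows that $\frS^*(n):=\lim_{Q\to\infty}\frS^*(n,Q)$ converges absolutely, is non-negative, and satisfies $\frS^*(n,Y_2(X))=\frS^*(n)+O(Y_2(X)^{-\del'})$ for some $\del'>0$, with the analogous statement for $\calJ^*$, whose complete form equals $A(n^{1/k})^sn^{u/k-1}\chi_\infty(n;\calA)$ for $\chi_\infty$ the weighted real solution density carried by the measure $\sig_\calA$. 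Together with the minor-arc bound this establishes~\eqref{1.warasymp2} with $\calC_\calA^*(n)=\frS^*(n)\,\chi_\infty(n;\calA)$. Finally, if~\eqref{C} holds then it renders $q\mapsto S_k^*(q,\cdot;\calA)$ multiplicative in the Chinese-remainder sense, which together with the same property of $S_k(q,a)$ factors $\frS^*(n)=\prod_p\chi_p(n;\calA)$; the embedding $\calA\hookrightarrow\Z_p$ supplied by~\eqref{2:kappa-add} then identifies each $\chi_p(n;\calA)$ with the density of solutions of~\eqref{1.waringeq2} over $\Z_p$, in the sense explained after Theorem~\ref{T:Waring}. I expect the delicate points to be the bookkeeping of the major-arc errors and, above all, this use of~\eqref{V1} at an inflated scale to force convergence of the singular series and integral; everything else is a routine transcription of the proof of Theorem~\ref{T:Waring}.
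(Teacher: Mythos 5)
Your proposal is correct and follows essentially the same route as the paper's Section~\ref{S6}: the same orthogonality identity for $R_{s,u;k}$, the classical Weyl bound for the $u$ unweighted sums $g$ on the minor arcs combined with Hypothesis~\eqref{V1} for $f_{k,\calA}$ on the full torus, the same major-arc replacements $f_{k,\calA}\to f^*_{k,\calA}$ and $g\to g^*$, and the same moment device for completing the singular series and integral: the bound $\sum_{q\le P}\sum_{(a,q)=1}|S_{k}^*(q,a;\calA)|^{2t_0}\ll\Del(Z)$ at an inflated scale $Z$ with $P\le Y_2(Z)$ is exactly the paper's Lemma~\ref{L4.upper} (whose proof also needs the lower bound of Lemma~\ref{L4.lower}, implicit in your disjoint-short-arcs sketch), and the dyadic tail summation matches \eqref{4.J-dyad}--\eqref{4.S-conv}. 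The one point where you diverge is in how the unweighted local factors are estimated: you invoke the classical bounds $v(\bet)\ll X(1+X^k|\bet|)^{-1/k}$ and (implicitly, for the $q$-tail — note it is the complete sum $q^{-1}\sum_{r\le q}e(ar^k/q)\ll q^{-1/k+\eps}$, not $v_k$, that gives the saving per dyadic block in $q$) from Vaughan's monograph, and verify convergence via $u/k>\vrho_0\sig_0$, which indeed follows from $u>\sig_0$ and $\vrho_0\le 1/k$; the paper instead deliberately re-derives weaker bounds $q^{-\vrho_0}\log Z$ and $(1+|\gam|)^{-\vrho_0}\log Z$ by applying its Lemmata~\ref{L4.w-bd} and~\ref{L4.q-bd} to $\calA=\N$ (Lemma~\ref{L6.weyl}), precisely to show the minimalist inputs suffice — the paper explicitly notes that your stronger classical bounds could be used instead, so this is a legitimate and slightly sharper variant rather than a different method. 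One small slip: fixing $Q=Y_2(X)$ exactly makes the major-arc replacement error \eqref{6.approxN} only $O(A^sX^{u-k})$, not $o(A^sX^{u-k})$; one must take $Q\llcurly Y_2(X)$ as in Proposition~\ref{prop-W2}, which costs nothing on the minor arcs because $u>\sig_0$ holds strictly and $Y_2(X)^{\vrho_0-\del}\gg\log X$.
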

The most well-known versions of Theorem~\ref{T:Waring2} are concerned with the case when $\calA$ is a suitable set of smooth numbers, in which situation the conclusion of the theorem leads to some of the strongest known bounds for the size of $G(k)$ in Waring's problem. However, other sets are also possible and of interest, such as for instance Pliego's recent work \cite{pliego-3cub,pliego-gen} involving a set $\calA$ of numbers that can be represented as sums of $\ell$th powers. 
\begin{corollary}\label{cor-W2}
	Let $\calA$ be a set with positive logarithmic lower density, and suppose that it satisfies the assumptions of Theorem~\ref{T:Waring2} with $E(X)\ll 1$, $Q_D(X)  \asymp X$, as well as $\Del(X) \ll \min(X,A(X))^\ome$ for some $\ome \ge 0$. Then~\eqref{1.warasymp2} is satisfied whenever $\sig_0 \ge 5 \ome /  \vrho_0$. In particular, when $\ome < \vrho_0/5$, then~\eqref{1.warasymp2} holds with $s \geq 2t_0$ and $u\geq 1$.  
\end{corollary}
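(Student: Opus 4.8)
The plan is to simply verify that the hypotheses of Corollary~\ref{cor-W2} force the relevant parameters into the range where Theorem~\ref{T:Waring2} applies, and then read off the conclusion. First I would unpack the definition~\eqref{1.Y2} of $Y_2(X)$ under the standing assumptions: since $E(X) \ll 1$, the third entry $(A(X)/E(X))^{1/5} \gg A(X)^{1/5} \gg X^{(\lambda-\eps)/5}$ by positivity of the logarithmic lower density, and since $Q_D(X) \asymp X$ dominates $X^{1/5}$, the minimum is governed by the genuinely polynomial quantities; concretely $Y_2(X) \gg X^{c}$ for some fixed $c>0$ depending only on $\lambda$ and $k$. In particular $Y_2(X)^{\vrho_0-\del} \gg X^{c(\vrho_0-\del)}$ grows faster than any power of $\log X$, so the growth hypothesis $Y_2(X)^{\vrho_0-\del}\gg\log X$ of Theorem~\ref{T:Waring2} is automatic (after shrinking $\del$ if necessary so that $\vrho_0 - \del > 0$).

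Next I would determine the admissible value of $\sig_0$ from~\eqref{1.sig2}. We are given $\Del(X) \ll \min(X,A(X))^\ome$. Since $\min(X,A(X)) = A(X) \ll X$ (as $\lambda \le 1$) and $A(X) \ll Y_2(X)^{5}$ by the computation above — indeed $(A(X)/E(X))^{1/5} \le Y_2(X)$ gives $A(X) \ll Y_2(X)^5$ using $E(X)\ll1$ — we get $\Del(X) \ll Y_2(X)^{5\ome}$. On the other hand the right-hand side of~\eqref{1.sig2} is $(Y_2(Z)^{\vrho_0}/\log Z)^{\sig_0+\eps} \gg Y_2(Z)^{\vrho_0\sig_0 - \eps'}$ for any $\eps'>0$, using once more that $\log Z$ is negligible against the power $Y_2(Z)^{\eps''}$. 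Comparing exponents of $Y_2$, the relation~\eqref{1.sig2} holds as soon as $\vrho_0 \sig_0 \ge 5\ome$, i.e. $\sig_0 \ge 5\ome/\vrho_0$; this is exactly the stated hypothesis, so Theorem~\ref{T:Waring2} applies with this $\sig_0$.

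Finally I would invoke Theorem~\ref{T:Waring2}: for any integers $s \ge 2t_0$ and $u > \sig_0$ the asymptotic formula~\eqref{1.warasymp2} holds, which is precisely~\eqref{1.warasymp2} as restated in the corollary. For the ``in particular'' clause, when $\ome < \vrho_0/5$ we may take $\sig_0$ to be any real number in the interval $[5\ome/\vrho_0, 1)$ — for instance $\sig_0 = 5\ome/\vrho_0$ itself, which is strictly less than $1$ — so that the condition $u > \sig_0$ is met by $u = 1$, and the formula holds with $s \ge 2t_0$ and $u \ge 1$. If \eqref{C} is additionally assumed, the product interpretation of $\calC_{\calA}^*(n)$ carries over verbatim from Theorem~\ref{T:Waring2}.

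The only mildly delicate point — and the step I expect to need the most care — is the repeated use of the implicit constants and the $\eps$-losses: one must check that the $X^{\eps}$-type and $\log X$-type factors really are absorbed by a genuine power of $Y_2(X)$, which relies crucially on $Y_2(X)$ being bounded below by a fixed positive power of $X$, and this in turn is where positivity of the logarithmic lower density $\lambda(\calA) > 0$ enters. Everything else is bookkeeping: matching the exponents of $Y_2$ on the two sides of~\eqref{1.sig2} and confirming that the minimal admissible $\sig_0$ is $5\ome/\vrho_0$.
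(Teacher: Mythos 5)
Your proposal is correct in substance and follows exactly the route the paper intends for this corollary (the paper gives no separate proof; it is the same bookkeeping as for Corollary~\ref{cor-W}): show that $Y_2(X)$ is a fixed positive power of $X$, deduce $\Del(X)\ll Y_2(X)^{5\ome}$, verify \eqref{1.sig2} with $\sig_0=5\ome/\vrho_0$ (the extra $\eps$ in the exponent absorbing the logarithm), and invoke Theorem~\ref{T:Waring2}, taking $u=1>\sig_0$ when $\ome<\vrho_0/5$. One step is mis-stated, however: you justify $A(X)\ll Y_2(X)^5$ by the inequality $(A(X)/E(X))^{1/5}\le Y_2(X)$, which is backwards --- by the definition \eqref{1.Y2}, $Y_2$ is a minimum, so $Y_2(X)\le (A(X)/E(X))^{1/5}$, and from that alone nothing follows in the direction you need (note also that $E(X)\ll 1$ allows $E(X)\to 0$, in which case $A/E$ can be far larger than $A$). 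The repair is immediate: $Y_2(X)^5=\min\{Q_D(X)^5,\,A(X)/E(X),\,X\}\gg\min\{A(X),X\}$, using $Q_D(X)\asymp X$ and $E(X)\ll 1$, and since the hypothesis bounds $\Del(X)\ll\min(X,A(X))^{\ome}$ this yields $\Del(X)\ll Y_2(X)^{5\ome}$ directly, without needing your auxiliary claim $\min(X,A(X))=A(X)$ (which is automatic for an unweighted set but not needed). With that correction the remainder of your argument is exactly right.
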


\subsection{Mean values} \label{subs: MVs}
Our methods can also be used to derive asymptotic formul{\ae} for the counting functions associated with mean values of exponential sums. Although they are applicable equally well to exponential sums of multivariate polynomials, such as also occur in certain systems of diagonal equations with repeated degrees, we focus here on the univariate case. Suppose that $\vphi_1, \ldots \vphi_r \in \Z[X]$ are polynomials with respective degrees $k_1, \ldots, k_r$, and define the exponential sum 
$$
	f_{\bphi,\calA}(\balp;X) = \sum_{x \le X} a_x e(\alp_1 \vphi_1(x) + \ldots + \alp_r \vphi_r(x)). 
$$
Although in principle a suitable adaptation of our methods also covers systems of non-homogeneous polynomials, we obtain cleaner estimates when the $\vphi_i$ are homogeneous, so for the purpose of this paper we restrict to the homogeneous case. 
In this notation, the mean value 
$$
	I_{s,\bphi}(X; \calA) = \int_{\T^r} |f_{\bphi,\calA}(\balp;X)|^{2s} \d \balp
$$
has for any integer $s$ an interpretation as the weighted number of solutions to the system of equations 
\begin{align}\label{1.mv-eq}
	\vphi_j(x_1) + \ldots + \vphi_j(x_s) = \vphi_j(y_1) + \ldots + \vphi_j(y_s) \qquad (1 \le j \le r)
\end{align}
where $1 \le \bx ,\by \le X$, and where every solution $(\bx,\by) \in \N^{2s}$ is counted with the weight $a_{x_1} \cdots a_{x_s} a_{y_1} \cdots a_{y_s}$.
For future reference we remark here that it is apparent from the interpretation of the mean value $I_{s, \bphi}(X;\calA)$ that we may suppose without loss of generality that the polynomials $\vphi_j$ have no constant terms. 
Putting $K = k_1 + \ldots + k_r$, a well-known argument along the lines of that presented in \cite[Chapter~7.1]{V:HL} shows that for every $s>0$ one has the lower bound
\begin{align}\label{1.lower-bd}
	I_{s,\bphi}(X; \calA) \gg A(X)^{s}+ A(X)^{2s}X^{-K}.
\end{align}
On the other hand, when the system $\bphi$ has a non-vanishing Wronskian, it is a consequence of Vinogradov's mean value theorem \cite[Theorem~1.1]{BDG}, \cite[Theorem~1.1]{W:NEC} that for any sequence $(a_n)$ we have
\begin{align}\label{1.VMVT-bound}
	I_{s,\bphi}(X; \calA) \ll \begin{cases}\|a\|_{\ell^2(X)}^{2s}X^{\eps} & \text{if }s \le \tfrac12r(r+1)\\ \|a\|_{\ell^2(X)}^{2s} X^{s-K+\eps}& \text{if } s \ge \tfrac12 k_{\mathrm{max}}(k_{\mathrm{max}}+1), \end{cases}
\end{align}
where we have written $k_{\mathrm{max}} = \max_j k_j$. 
An application of the Cauchy--Schwarz inequality shows that $A(X) \ll \| \mathbbm{1}_{\calA}\|_{\ell^2(X)} \|a\|_{\ell^2(X)}$, where $\mathbbm{1}_{\calA}$ denotes the  indicator function on the un-weighted set $\calA^\circ$. Thus, when $\calA$ is sparse inside $\N$ so that $\| \mathbbm{1}_{\calA}\|_{\ell^2(X)}$ is much smaller than $X^{1/2}$,  then for large values of $s$ the bounds in \eqref{1.lower-bd} and \eqref{1.VMVT-bound} will not match. In the case when $\calA$ is a suitable ellipsephic set, the first author \cite{biggs1,biggs2} as well as subsequent work by Chang et al.~\cite{Chang++} have strengthened the bound in \eqref{1.VMVT-bound} to resemble more closely the expected size as indicated in \eqref{1.lower-bd}. 

Just as in the case of Waring's problem, there is a general expectation that when an upper bound of the shape 
$$
	I_{s, \bphi}(X;\calA) \ll A(X)^{2s}X^{-K+\eps}
$$
has been found for all $s$ at least as large as some parameter $t_0$, then one should be able to establish an asymptotic formula for $I_{s,\bphi}(X;\calA)$ whenever $s$ is strictly larger than $t_0$. Moreover, even a weaker bound will still lead to an asymptotic formula, albeit at the expense of requiring a correspondingly larger number of variables. We make that expectation explicit and give a general proof of this fact under fairly general assumptions. 

As in the setting with Waring's problem, the statement of our result requires some notation. In particular, we need a more general version of Hypotheses \eqref{V1} and \eqref{W1} reflecting the existence of multiple polynomials $\vphi_1, \ldots, \vphi_r$.
\begin{definition}\label{def-V}
	We say that the exponential sum $f_{\bphi,\calA}(\balp;X)$ satisfies Hypothesis (V) with parameters $t_0$ and $\Del = \Del(X)$ if 
	\begin{align}\label{V}\tag{V}
		I_{t,\bphi}(X;\calA) \ll A(X)^{2t}X^{-K}\Del(X) \qquad \text{for all $t \ge t_0$}.
	\end{align}
\end{definition}
Clearly, Hypothesis \eqref{V1} is a special case of Hypothesis \eqref{V}, applying to the situation where $r=1$ and $\vphi(x)=x^k$. 

In order to state the generalised version of condition~\eqref{W1}, define the major arcs 
$$
	\frM_X(Q) = \bigcup_{q \le Q} \{ \balp \in \T^r: \|\alp_j q\| \le QX^{-k_j} \quad (1 \le j \le r)\},
$$
and the corresponding minor arcs $\frm_X(Q) = \T^r \setminus \frM_X(Q)$. As usual, we may drop indices whose value can be inferred from context. 
\begin{definition}\label{def-W}
	We say that the exponential sum $f_{\bphi,\calA}(\balp;X)$ satisfies Hypothesis \eqref{W} with parameters $\vrho \ge 0$, $Q_W = Q_W(X)$ and $L = L(X) \ll X^\eps$ when for all $Q \le Q_W$ we have 
	\begin{align}\label{W}\tag{W}
		\sup_{\balp \in \frm_X(Q)} |f_{\bphi, \calA}(\balp;X)| \ll A(X) Q^{-\vrho} L(X).
	\end{align}
\end{definition}
Again, condition~\eqref{W1} is the special case of Hypothesis \eqref{W} that is obtained by specialising $r=1$ and $\vphi(x)=x^k$. 

Given the various parameters of Properties \eqref{D}, \eqref{V} and \eqref{W}, put
\begin{align}\label{1.Y-mv}
	Y(X)=\min\left\{Q_D(X), Q_W(X), \left(\frac{A(X)}{E(X)}\right)^{1/(2r+3)}, X^{1/(2r+3)} \right\}.
\end{align}
Again, we see that the definition of $Y(X)$ in \eqref{1.Y} just corresponds to the situation where $r=1$ here. 
In this notation, our main result for mean values is as follows. 
\begin{theorem}\label{T:VMVT}
	Suppose that $\vphi_j(x)=c_j x^{k_j}$ for $1 \le j \le r$, with $k_1 < \ldots < k_r$. Suppose further that $\calA$ is such that the conditions \eqref{D}, \eqref{V} and \eqref{W} are satisfied for $f_{\bphi,\calA}(\balp;X)$ with parameters $t_0$, $\Del$, $\vrho$, $Q_D$, $Q_W$ and $L$, and assume that $L(X) \ll Y(X)^{\vrho-\del}$ for some $\del>0$.  Define $\sig_0$ as in \eqref{1.sig}. Then for any integer $s$ with $2s > 2t_0+\sig_0$ we have an asymptotic formula of the shape 
	\begin{align}\label{1.mv-asymp}
		I_{s,\bphi}(X; \calA) = A(X)^{2s}X^{-K} (\calC_\calA + o(1)),
	\end{align}
	where $\calC_\calA$ is a non-negative constant. If, moreover, \eqref{C} holds, this constant has an interpretation in terms of a product of solution densities of suitable analogues of \eqref{1.mv-eq} over all completions $\Q_v$ of $\Q$.
\end{theorem}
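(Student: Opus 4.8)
The plan is to run the Hardy--Littlewood method with the Farey dissection at level $Y=Y(X)$ defined in~\eqref{1.Y-mv}. Writing $\frM=\frM_X(Y)$ and $\frm=\T^r\setminus\frM$, one splits $I_{s,\bphi}(X;\calA)=\int_\frM|f_{\bphi,\calA}(\balp;X)|^{2s}\,\d\balp+\int_\frm|f_{\bphi,\calA}(\balp;X)|^{2s}\,\d\balp$ and must show that the first term equals $A^{2s}X^{-K}(\calC_\calA+o(1))$ while the second is $o(A^{2s}X^{-K})$. The minor arcs are routine. Since $2s>2t_0+\sig_0\ge 2t_0$, one extracts a pointwise factor,
\[
\int_\frm|f_{\bphi,\calA}|^{2s}\,\d\balp\le\Bigl(\sup_{\balp\in\frm}|f_{\bphi,\calA}(\balp;X)|\Bigr)^{2s-2t_0}\int_{\T^r}|f_{\bphi,\calA}|^{2t_0}\,\d\balp,
\]
and then \eqref{W} with $Q=Y\le Q_W$ gives $\sup_{\frm}|f_{\bphi,\calA}|\ll AY^{-\vrho}L$, while \eqref{V} with $t=t_0$ bounds the remaining integral by $A^{2t_0}X^{-K}\Del$. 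Inserting the definition of $\sig_0$ from~\eqref{1.sig} and using the hypothesis $L\ll Y^{\vrho-\del}$ (so that $LY^{-\vrho}\ll Y^{-\del}\to 0$) bounds the minor arc contribution by $A^{2s}X^{-K}(LY^{-\vrho})^{2s-2t_0-\sig_0-\eps}=o(A^{2s}X^{-K})$ once $\eps$ is taken small enough.

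For the major arcs, which carry the bulk of the work, the analysis is the mean-value counterpart of the corresponding step for Waring's problem, with the monomial $x^k$ replaced by the system $(\vphi_1,\dots,\vphi_r)$ and $n^{1/k}$ by $X$. On a single arc around $\ba/q$ with $q\le Y$ and $\bbet=\balp-\ba/q$, splitting $x\le X$ into residue classes modulo $q$ and invoking Hypothesis~\eqref{D} together with an approximating function $\Psi$ satisfying $\Psi-A\ll E$, Riemann--Stieltjes partial summation yields $f_{\bphi,\calA}(\balp;X)=S^*(q,\ba)\,v(\bbet)+O\bigl(Y^2E(X)\bigr)$, where $S^*(q,\ba)=\sum_{b\bmod q}\kap(q,b)e\bigl(\sum_j c_ja_jb^{k_j}/q\bigr)$ and $v(\bbet)=\int_1^X e\bigl(\sum_j c_j\bet_jt^{k_j}\bigr)\,\d\Psi(t)$. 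The arcs are pairwise disjoint for large $X$ (as $Y^3=o(X^{k_1})$), there are $\ll Y^{r+1}$ of them, each of measure $\asymp Y^rX^{-K}$, and since $|S^*(q,\ba)|\le 1$ by~\eqref{2:kappa-add-1} and $|v|,|f_{\bphi,\calA}|\ll A$, replacing $|f_{\bphi,\calA}|^{2s}$ by $|S^*(q,\ba)v(\bbet)|^{2s}$ on each arc and summing costs $\ll Y^{2r+3}X^{-K}A^{2s-1}E(X)$ in total — this is the point at which the exponent $2r+3$ in~\eqref{1.Y-mv} is used, and with the customary pruning near the boundary of the arcs the aggregate error is $o(A^{2s}X^{-K})$. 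After rescaling $t=X\tau$ and $\bbet\mapsto\bgam$ one is left with $A^{2s}X^{-K}\bigl(\frS_\calA(Y)\,\frI_\calA(Y)+o(1)\bigr)$, where $\frS_\calA(Y)=\sum_{q\le Y}\sum_\ba|S^*(q,\ba)|^{2s}$ is a truncated singular series and $\frI_\calA(Y)$ the corresponding truncated singular integral built from $v$ and the $\Psi$-weighted measure on $[0,1]^{2s}$.

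It then remains to show that $\frS_\calA(Y)$ and $\frI_\calA(Y)$ converge to limits $\frS_\calA,\frI_\calA$ as $Y\to\infty$, with $\calC_\calA=\frS_\calA\frI_\calA$. Here I would follow the device of Heath-Brown and Skorobogatov: expand $|S^*(q,\ba)|^{2s}$ and $|v(\bbet)|^{2s}$ as a sum, respectively an integral, over the solution set of the system~\eqref{1.mv-eq}, decompose dyadically (in $q$, respectively in $|\bgam|$), and dominate each dyadic piece by the mean value bound~\eqref{V} — the counts of solutions to~\eqref{1.mv-eq} in short ranges are exactly what \eqref{V} controls. The hypothesis $2s>2t_0+\sig_0$ is precisely what makes the resulting geometric-type series converge, and this argument uses no asymptotic information on $A(X)$ whatsoever. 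Finally, if~\eqref{C} holds, the multiplicativity of the $\kap(q,b)$ factors $\frS_\calA$ into an Euler product $\prod_p\chi_p$, and together with the archimedean factor $\chi_\infty$ coming from $\frI_\calA$ and the $\Psi$-weighted measure this yields the claimed interpretation of $\calC_\calA$ as a product of solution densities of local analogues of~\eqref{1.mv-eq} over all completions $\Q_v$ of $\Q$.

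The two genuinely delicate points are both in the major arc analysis. The first is controlling the cumulative error of the $\asymp Y^{r+1}$ approximations $f_{\bphi,\calA}\approx S^*v$: it is this, and not the minor arc estimate, that dictates the truncation level $Y(X)$ and the exponent $2r+3$, and it is the only place where the distribution hypothesis~\eqref{D} is used. The second is establishing convergence of the singular series and singular integral purely from the mean value bound~\eqref{V}, avoiding complete exponential sum estimates and any hypothesis on the growth of $A(X)$ beyond~\eqref{D}; making the Heath-Brown--Skorobogatov bookkeeping uniform across the truncation and matching the limits to the advertised local densities is where the real care is required. By contrast, the minor arc estimate is immediate once~\eqref{V} and~\eqref{W} are granted.
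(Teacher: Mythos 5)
Your overall skeleton matches the paper: approximate $f_{\bphi,\calA}$ by $S_\calA(q,\bb)v_\calA(\bbet)$ on slightly enlarged major arcs using \eqref{D} and Abel summation (the paper's Lemma~\ref{L3.approx}), control the cumulative replacement error of size $Q^{2r+3}(E/A+1/X)$ (the origin of the exponent in \eqref{1.Y-mv}), dispose of the minor arcs by combining \eqref{V} and \eqref{W}, and then complete the truncated singular series and integral, with the Euler product under \eqref{C}. (One small point there: with the dissection taken exactly at level $Q=Y$ the replacement error is only $O(A^{2s}X^{-K})$, not $o(\cdot)$; the paper instead chooses $Q$ with $\Del^{1/(\sig\vrho)}L^{1/\vrho}\llcurly Q\llcurly Y$, which is the content of Proposition~\ref{prop-minor}. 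This is easily repaired.)

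The genuine gap is in your completion step. You assert that convergence of $\frS_\calA(Q)$ and of the singular integral can be obtained ``purely from the mean value bound \eqref{V}'' by dyadic decomposition. It cannot: \eqref{V} only controls the aggregate $2t_0$-th moment, and what it yields (the paper's Lemma~\ref{L4.upper}, after separating the product $\scrS_\calA(Q)\scrJ_\calA(Q)$ with the lower bounds of Lemma~\ref{L4.lower}) is $\scrS_\calA(Q)\ll\Del(Z_Q)$ and $\scrJ_\calA(Q)\ll\Del(Z_Q)$, bounds which may even grow with $Q$. To make the dyadic tails $\frS_\calA(2Q)-\frS_\calA(Q)$ and $\frJ_\calA(2Q)-\frJ_\calA(Q)$ summable one must multiply these by a \emph{pointwise} decay factor $\sup_{q\ge Q}|S_\calA(q,\bb)|^{\sig}$, respectively $\sup_{|\bgam|\ge Q}|w_\calA(\bgam)|^{\sig}$, and such decay does not follow from \eqref{V}; it is extracted from the Weyl hypothesis \eqref{W}, applied at an auxiliary scale $Z$ chosen so that $\bb/q$, respectively $\bbet$, sits on the edge of the major arcs $\frM_Z(Q)$, and transferred to $S_\calA$ and $w_\calA$ via Lemma~\ref{L3.approx} (Lemmata~\ref{L4.q-bd} and~\ref{L4.w-bd}). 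Only then does the dyadic piece become $\ll Q^{-\sig\vrho}L(Z_Q)^{\sig}\Del(Z_Q)\ll Q^{-\del_1}$, and it is exactly here --- not only on the minor arcs --- that the definition \eqref{1.sig} of $\sig_0$ and the hypothesis $L(X)\ll Y(X)^{\vrho-\del}$ are needed. So your description of this step as using \eqref{V} alone, with \eqref{W} confined to the minor arcs, would not close; the interplay of \eqref{V}-boundedness with \eqref{W}-derived pointwise decay of the completed local factors is the crux of the paper's argument (and is also why the reference to Heath-Brown and Skorobogatov is accompanied in the paper by the Browning--Heath-Brown style Lemmata~\ref{L4.w-bd} and~\ref{L4.q-bd}).
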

Just like in the case of Theorem~\ref{T:Waring}, we will see below that when \eqref{C} holds the constant $\calC_\calA$ has a representation 
$$
	\calC_{\calA} = \chi_{\infty}(\calA) \prod_{p \text{ prime}} \chi_{p}(\calA),
$$
where $\chi_{p}(\calA)$ corresponds to the solution density of \eqref{1.mv-eq} over the set $\Q_p(\calA)$, while $\chi_{\infty}(\calA)$ measures the density of solutions of \eqref{1.mv-eq} over the real unit cube, but with respect to the weighted measure $\sig_{\calA}$ rather than the naive one.  We also remark that in the case when the $\bphi$ have lower order terms, we still obtain the expected main term and an error term of smaller magnitude, but the main term will not have an interpretation as simple as the one given in \eqref{1.mv-asymp}. 

As in the previous setting, Theorem~\ref{T:VMVT} confirms the principle that in order to establish asymptotic formul{\ae} for mean values of moments of multidimensional exponential sums,  the analysis of the minor arcs should form the bottleneck of the argument. Indeed, we see that apart from the conditions \eqref{V} and \eqref{W} controlling the minor arc bound, the only additional input that is needed is that the set $\calA$ should satisfy suitable distribution properties over residue classes. 

Since the statement of Theorem~\ref{T:VMVT} may be hard to parse, we present a simplified statement that applies to many situations of interest. 

\begin{corollary}\label{cor-MV}
	Let $\calA$ be a set with positive logarithmic lower density. Suppose that $\calA$ and $\bphi$ satisfy the assumptions of Theorem~\ref{T:VMVT} with $E(X)\ll 1$, $Q_D(X) \asymp Q_W(X) \asymp X$, as well as $\Del(X) \ll \min(X,A(X))^\ome$ for some $\ome \ge 0$. Then \eqref{1.mv-asymp} is satisfied whenever $\sig_0 \ge (2r+3) \ome /  \vrho$. In particular, when $\ome =\eps$, then \eqref{1.mv-asymp} holds for any $s > t_0$.  
\end{corollary}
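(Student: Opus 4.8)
The plan is to deduce the corollary directly from Theorem~\ref{T:VMVT} by evaluating the auxiliary quantity $Y(X)$ from \eqref{1.Y-mv} and the exponent $\sig_0$ from \eqref{1.sig} under the stated hypotheses. First I would unwind the positivity of the logarithmic lower density: writing $\lam = \lam(\calA) > 0$, we have $A(X) \gg X^{\lam - \eps}$, so $\min\{X, A(X)\}$ exceeds a fixed positive power of $X$. Combining this with $E(X) \ll 1$ and $Q_D(X) \asymp Q_W(X) \asymp X$, the definition \eqref{1.Y-mv} gives
\begin{align*}
	Y(X) \gg \min\{X, A(X)\}^{1/(2r+3)} \gg X^{\eta}
\end{align*}
for some $\eta = \eta(\lam, r) > 0$; in particular $Y(X)$ is a genuine power of $X$. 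One may assume $\vrho > 0$, since otherwise Hypothesis~\eqref{W} is vacuous and there is nothing to prove. As $L(X) \ll X^\eps$ and $\eps$ is at our disposal, it then follows that $L(X) \ll Y(X)^{\vrho - \del}$ for all sufficiently small $\del > 0$, which is the supplementary hypothesis of Theorem~\ref{T:VMVT}.

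Next I would bound $\sig_0$. From $\Del(X) \ll \min\{X, A(X)\}^\ome$ together with the lower bound for $Y(X)$ above we obtain $\Del(X) \ll Y(X)^{(2r+3)\ome}$, whereas $L(Z) \ll Z^\eps \ll Y(Z)^\eps$ yields $Y(Z)^\vrho / L(Z) \gg Y(Z)^{\vrho - \eps}$. Hence the relation in \eqref{1.sig} is satisfied as soon as $(\sig_0 + \eps)(\vrho - \eps) \ge (2r+3)\ome$, and on letting $\eps \to 0$ this shows that the minimal admissible $\sig_0$ is at most $(2r+3)\ome/\vrho$; equivalently, every $\sig_0 \ge (2r+3)\ome/\vrho$ may be fed into Theorem~\ref{T:VMVT}. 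That theorem then delivers the asymptotic formula \eqref{1.mv-asymp} for every integer $s$ with $2s > 2t_0 + (2r+3)\ome/\vrho$, which is the first assertion. When $\ome$ may be taken as small as $\eps$, the quantity $(2r+3)\ome/\vrho$ drops below any fixed positive number, so the constraint collapses to $s > t_0$, giving the final assertion.

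There is no genuine obstacle here, since all of the analytic work is already packaged into Theorem~\ref{T:VMVT}; the only thing to watch is the bookkeeping of the parameter $\eps$. It is exactly the hypothesis of positive logarithmic lower density that makes $Y(X)$ a positive power of $X$, and this is what licenses the absorption of the $X^\eps$ losses coming from $L(X)$ and from the definition of $\sig_0$ without disturbing the final exponent $(2r+3)\ome/\vrho$; one should also not forget to dispose of the degenerate case $\vrho = 0$ separately at the outset.
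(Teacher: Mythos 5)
Your deduction is correct and coincides with the intended (essentially implicit) argument in the paper: under the stated specialisations one has $Y(X)\gg\min\{X,A(X)\}^{1/(2r+3)}\gg X^{\eta}$, whence $\Del\ll Y^{(2r+3)\ome}$ and $L\ll Y^{\vrho-\del}$, so \eqref{1.sig} admits $\sig_0\le(2r+3)\ome/\vrho$ and Theorem~\ref{T:VMVT} applies. Your bookkeeping of the $\eps$-losses and the remark on the degenerate case $\vrho=0$ are both fine.
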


One particular special case of Theorem~\ref{T:VMVT} and Corollary~\ref{cor-MV} relates to systems of polynomials $\bphi$ with non-vanishing Wronskian over the set $\calA = \N$ of integers. The following result is well known in the folklore and follows easily by standard arguments (see also recent work by Boyer and Robert \cite{BoyR}), but it is also a straightforward consequence of our results.

\begin{corollary}\label{cor-MV2}
	Let $\vphi_j(x)=c_jx^{k_j}$ for $1\le j \le r$ with $k_1 < \ldots < k_r$, and suppose that $\calA = \N$. Suppose further that $s> \frac12 k_{\mathrm{max}}(k_{\mathrm{max}}+1)$. Then \eqref{1.mv-asymp} is satisfied.
\end{corollary}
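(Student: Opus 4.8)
The plan is to deduce Corollary~\ref{cor-MV2} directly from Corollary~\ref{cor-MV} (equivalently, from Theorem~\ref{T:VMVT}) applied to $\calA = \N$, by checking that all the required hypotheses hold with the parameter $\ome$ taken to be $\eps$ and with $t_0 = \tfrac12 k_{\mathrm{max}}(k_{\mathrm{max}}+1)$. Since the monomial system has distinct degrees, $K = k_1 + \ldots + k_r$ and $k_{\mathrm{max}} = k_r$.

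First I would record the distribution data for $\calA = \N$. With $a_n \equiv 1$ one has $A(q,b;X) = \#\{n \le X : n \equiv b \mmod q\} = X/q + O(1)$, so Hypothesis~\eqref{D} holds with $\kap(q,b) = 1/q$, with error $E(X) \ll 1$, and to any level $Q_D(X) \asymp X$ (indeed any polynomial level). The relation~\eqref{C} is immediate, since $\kap(qq',\,qb'+q'b) = 1/(qq') = \kap(q,q'b)\kap(q',qb')$, and $\calA = \N$ has logarithmic lower density $\lam = 1 > 0$. Consequently the constant $\calC_\calA$ in~\eqref{1.mv-asymp} will factor into local densities; in fact, as $\calA^\circ = \N$ is equidistributed modulo every $q$, each $\chi_p(\calA)$ coincides with the classical $p$-adic solution density of the system~\eqref{1.mv-eq} and $\chi_\infty(\calA)$ reduces to the classical singular integral.

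Next I would verify Hypothesis~\eqref{V}. The system $\bphi = (c_1 x^{k_1}, \ldots, c_r x^{k_r})$ with $0 < k_1 < \ldots < k_r$ and $c_j \neq 0$ has non-vanishing Wronskian, since $W(x^{k_1}, \ldots, x^{k_r})$ is a nonzero constant multiple of $\prod_{i<j}(k_j - k_i)\, x^{\sum_i k_i - \binom r2}$, which is nonzero for $x > 0$. Hence the Vinogradov mean value bound~\eqref{1.VMVT-bound} applies, and since $a_n \equiv 1$ gives $\|a\|_{\ell^2(X)}^{2} \asymp X \asymp A(X)$, it yields $I_{t,\bphi}(X;\N) \ll A(X)^{2t} X^{-K+\eps}$ for all $t \ge t_0 := \tfrac12 k_{\mathrm{max}}(k_{\mathrm{max}}+1)$. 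Thus Hypothesis~\eqref{V} holds with $\Del(X) = X^\eps$, i.e.\ with $\ome = \eps$ in the terminology of Corollary~\ref{cor-MV}.

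The remaining point, which I expect to be the only genuinely technical one, is Hypothesis~\eqref{W}: a power-saving Weyl bound $\sup_{\balp \in \frm_X(Q)} |f_{\bphi,\N}(\balp;X)| \ll X Q^{-\vrho} X^\eps$ for $Q$ up to some $Q_W(X) \asymp X$, with some $\vrho > 0$ (one may take $\vrho = \vrho_0(k_{\mathrm{max}})$ as in~\eqref{0.rho}). Here $f_{\bphi,\N}(\balp;X) = \sum_{x \le X} e(\Psi(x))$ for the single polynomial $\Psi(x) = \sum_j \alpha_j c_j x^{k_j}$ of degree $k_{\mathrm{max}}$, and one combines Weyl differencing with respect to the top degree (the terms $\alpha_j c_j x^{k_j}$ with $j < r$ are annihilated after $k_{\mathrm{max}}-1$ differencings, so they do not obstruct the argument) with the usual Dirichlet-approximation bookkeeping; this is the classical Weyl estimate, available for $\calA = \N$ (see e.g.\ \cite{V:HL}). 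The one subtlety is that $\frm_X(Q)$ is defined via a \emph{simultaneous} rational approximation to all of $\alpha_1, \ldots, \alpha_r$, so when the top coefficient $c_r\alpha_r$ happens to lie on a major arc one cannot conclude directly; instead one passes to the residue-class decomposition $x \mapsto qz + \ell$ and applies the estimate inductively to the lower-degree part, using that some $\alpha_{j_0}$ with $j_0 < r$ must then fail the major-arc condition. Granting this, all hypotheses of Corollary~\ref{cor-MV} are met for $\calA = \N$ with $\ome = \eps$ and $t_0 = \tfrac12 k_{\mathrm{max}}(k_{\mathrm{max}}+1)$, and that corollary gives the asymptotic formula~\eqref{1.mv-asymp} for every integer $s > t_0 = \tfrac12 k_{\mathrm{max}}(k_{\mathrm{max}}+1)$, which is the claim.
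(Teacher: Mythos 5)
Your reduction is exactly the paper's: both verify Hypotheses \eqref{D}, \eqref{C} and \eqref{V} for $\calA=\N$ (with $A(X)=X$, $E(X)\ll 1$, $Q_D(X)\asymp X$, and $\Del(X)\ll X^\eps$ for $t_0=\tfrac12 k_{\mathrm{max}}(k_{\mathrm{max}}+1)$ via Vinogradov's mean value theorem as in \eqref{1.VMVT-bound}), and then feed this into Corollary~\ref{cor-MV}, i.e.\ Theorem~\ref{T:VMVT}. The only point where you diverge from the paper is the verification of \eqref{W}, and that is also the only point where your argument is not complete. The paper settles it by citing \cite[Theorem~5.2]{V:HL}, which is formulated precisely for the simultaneous-approximation minor arcs $\frm_X(Q)$ attached to the whole coefficient vector $\balp$, and which yields \eqref{W} with $Q_W=X$, $\vrho \ge 1/(k_{\mathrm{max}}(k_{\mathrm{max}}-1))$ and $L(X)\ll X^\eps$; since any fixed $\vrho>0$ makes $\sig_0$ arbitrarily small, every integer $s>t_0$ is admissible.

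Your proposed substitute for that citation --- Weyl differencing in the top degree plus an inductive patch for the case where $c_r\alp_r$ is close to a rational with small denominator --- is exactly where the real work lies, and as sketched (``granting this'') it does not yet go through. Two concrete obstacles: (i) if $c_r\alp_r = a/q+\bet$ with $q\le Q$ and $|\bet|\le QX^{-k_r}$, then after the substitution $x=qz+\ell$ the top-degree phase is not constant in $z$; removing the slowly varying factor $e(\bet x^{k_r})$ by partial summation costs a factor of size $1+|\bet|X^{k_r}$, which may be as large as $Q$ and can swallow the saving $Q^{-\vrho}$ you hope to extract from the lower-degree part; (ii) the information you want to pass to the inner sums, namely that some $\alp_{j_0}$ with $j_0<r$ violates the major-arc condition, is a condition on $\alp_{j_0}$ itself, whereas the coefficient of $z^{j_0}$ in the new sum is $q^{j_0}c_{j_0}\alp_{j_0}$ together with binomial cross-terms coming from the higher-degree monomials, so the minor-arc hypothesis does not transfer verbatim and the induction requires genuine bookkeeping. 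All of this is precisely what \cite[Theorem~5.2]{V:HL} (a large-values estimate based on Vinogradov's methods) packages for you; replacing your sketch by that citation makes your proof coincide with the paper's.
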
 
This follows easily by observing that under the assumptions of the corollary, we have $A(X)=X$,  $E(X)=1$ and $Q_D=X$, and moreover Vinogradov's mean value theorem \cite[Theorem~1.1]{BDG}, \cite[Corollary 1.3]{W:NEC} together with standard arguments \cite[Theorem~5.2]{V:HL} imply that $\Delta(X) \ll X^\eps$ for $t_0 \ge \frac12 k_{\mathrm{max}}(k_{\mathrm{max}}+1)$, and $\vrho \ge 1/(k_{\mathrm{max}}(k_{\mathrm{max}}-1))$ with $Q_W=X$, respectively, so that any  $\sig>0$ is permitted.

\section{Approximation on the major arcs}\label{S3}

Recall that we aim for results that do not depend on the function $A(X)$ being well understood in terms of asymptotic formul{\ae}. In the case that there is an asymptotic formula, we write $A(X) = \Psi(X) + O(E(X))$ for a differentiable and increasing function $\Psi$ and an error term $E(X)$ which for the sake of simplicity we will assume to be the same as that of hypothesis \eqref{D}. We also write $\psi(x) = \Psi'(x)$. 

In the case when $A(X)$ is less well understood, we construct a piecewise differentiable function $\Psi^*$ as follows.  Suppose that the set $\calA^\circ$, viewed as a simple subset of the natural numbers, is given by $\calA^\circ = \{x_1, x_2, \ldots\}$, where we assume the $x_i$ to be listed in ascending order. Moreover, set $x_0=0$. We partition the positive real axis into intervals $I_n = (x_{n-1}, x_n]$ for $n \in \N$, and define the functions 
\begin{align}\label{3.functions}
	\psi^*_n(x) &= \frac{a_{x_n}}{|I_n|} \mathbbm{1}_{I_n}(x), & \psi^*(x) &= \sum_{n=1}^\infty \psi^*_n(x), & \Psi^*(X) = \int_0^X \psi^*(x) \d x. 
\end{align}
Clearly, $\Psi^*$ is monotonously increasing with the property that $\Psi^*(x_i) = A(x_i)$ for all $x_i \in \calA^{\circ}$. Moreover, the following lemma will help to control the error that arises upon replacing $A(X)$ by $\Psi^*(X)$. 
\begin{lemma}\label{L3.reg}
	Suppose that $\calA$ satisfies Condition~\eqref{D} with error term $E(X) = o(A(X))$. Then $a_n \ll E(n)$. 
\end{lemma}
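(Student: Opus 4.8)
The plan is to exploit Condition~\eqref{D} at the two consecutive scales $X=n$ and $X=n-1$, where $A(X)$ increases by exactly $a_n$, and to pit this jump against the error term $E$. First I would dispose of trivialities: if $a_n=0$ there is nothing to prove, so I assume $n\in\calA^\circ$, and since any finite set of $n$ can be absorbed into the implied constant I may take $n$ large; I may also assume that $E$ is non-decreasing, since replacing $E(X)$ by $\sup_{Y\le X}E(Y)$ leaves the hypothesis $E(X)=o(A(X))$ intact (as $A$ is non-decreasing and unbounded).

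The heart of the argument is as follows. Fix any modulus $q\le Q_D(n-1)$ and any residue $b\not\equiv n\mmod q$. The only index that could contribute to $A(q,b;\,\cdot\,)$ in the range $(n-1,n]$ is $n$ itself, and $n\not\equiv b\mmod q$; hence $A(q,b;n)-A(q,b;n-1)=0$. Applying~\eqref{D} to each of these two terms and subtracting, the right-hand side equals $\kap(q,b)\bigl(A(n)-A(n-1)\bigr)+O\bigl(E(n)+E(n-1)\bigr)$, which by $A(n)-A(n-1)=a_n$ and the monotonicity of $E$ is $\kap(q,b)\,a_n+O(E(n))$. Therefore
\[
	\kap(q,b)\,a_n\ll E(n)\qquad\text{for every }q\le Q_D(n-1)\text{ and every }b\not\equiv n\mmod q,
\]
with an implied constant inherited from~\eqref{D}. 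Summing over the $q-1$ residues $b\not\equiv n\mmod q$ and using $\sum_{b\bmod q}\kap(q,b)=1$ from~\eqref{2:kappa-add-1}, this becomes $\bigl(1-\kap(q,n\bmod q)\bigr)\,a_n\ll E(n)$, with a constant now also depending on $q$. (Equivalently, and without the subtraction: taking $b\equiv n\mmod q$ in~\eqref{D} at $X=n$ and noting $A(q,n\bmod q;n)\ge a_n$ gives $a_n\le\kap(q,n\bmod q)\,A(n)+O(E(n))$, so a single modulus $q\le Q_D(n)$ with $\kap(q,n\bmod q)=0$ would finish the proof immediately.)

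To close, I would fix once and for all a modulus $q_0$ modulo which $\calA$ is not asymptotically supported on one residue class, i.e.\ $\max_{c}\kap(q_0,c)\le 1-\delta_0$ for some fixed $\delta_0>0$; then for all $n$ large enough that $Q_D(n-1)\ge q_0$ the penultimate display yields $\delta_0\,a_n\le\bigl(1-\kap(q_0,n\bmod q_0)\bigr)a_n\ll E(n)$, which is the claim. The only delicate point — and the one I expect to be the real obstacle — is to justify the existence of such a $q_0$, or to handle the degenerate alternative in which the $\kap(q,\cdot)$ collapse to a single point mass (so that $\kap(q,c)=0$ for all $c$ off one distinguished class modulo each $q$, and the bound $\kap(q,b)\,a_n\ll E(n)$ becomes vacuous). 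In that degenerate case one argues directly from the estimate $A(q,b;X)=O(E(X))$ valid for every off-class residue $b$: an index $m\in\calA^\circ$ lying off the distinguished class modulo some $q\le Q_D(m)$ then obeys $a_m\le A(q,b;m)\ll E(m)$ outright, so that only those $m$ congruent to the distinguished residue modulo $\mathrm{lcm}(1,\dots,\lfloor Q_D(m)\rfloor)$ remain, and these are pinned down using the non-vacuity of~\eqref{D} — the presence of a fixed element of $\calA^\circ$ in a $\kap$-null class forcing $E(X)$ to stay bounded away from $0$.
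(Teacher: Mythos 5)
Your core argument is the paper's own: apply \eqref{D} at the consecutive scales $n-1$ and $n$ to a residue $b\not\equiv n \mmod q$, observe $A(q,b;n)-A(q,b;n-1)=0$, and deduce $\kap(q,b)\,a_n\ll E(n)$; your concluding device (a fixed $q_0$ with $\max_c\kap(q_0,c)\le 1-\delta_0$) is just a repackaging of the paper's choice of the minimal modulus $q$ admitting two residues with $\kap(q,\cdot)>0$ together with the best admissible $b\not\equiv n\mmod q$, so in that (non-degenerate) situation your proof is correct and essentially identical to the paper's. Concerning the ``delicate point'' you flag: the paper does not address it at all -- it simply takes the minimal $q$ with two positive classes, tacitly assuming one exists -- and your closing sketch for the point-mass alternative is not a proof: the bound $A(q,b;X)\ll E(X)$ for off-class residues says nothing about $a_m$ for those $m$ lying in the distinguished class modulo every admissible $q$, and the final appeal to the ``non-vacuity of \eqref{D}'' does not bound them. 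Indeed no argument can close that case without extra hypotheses: taking $\calA^\circ=\{m!\colon m\ge1\}$ with weights $a_{m!}=e^m$, one checks that \eqref{D} holds with $\kap(q,0)=1$ and $\kap(q,b)=0$ otherwise, $Q_D(X)=\log\log X$ and $E(X)=\log X=o(A(X))$, while $a_{m!}/E(m!)\to\infty$; so the lemma genuinely needs the implicit non-degeneracy assumption that some modulus carries $\kap$-mass on at least two classes. Since your main-case argument matches the paper's proof and your extra paragraph merely attempts (unavoidably without success) to cover a case the paper excludes by fiat, the proposal should be regarded as reproducing the paper's argument, with the honest caveat that the degenerate case is a gap shared with the paper itself.
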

\begin{proof}
	We may assume that $a_n > 0$, for otherwise the statement is trivial. Let $q \in \N$ be minimal with the property that there exist at least two distinct values $b \in \Z/q\Z$ with $\kap(q,b)>0$, and pick $b \not\equiv n \mmod q$ so as to maximise the value of $\kap(q,b)$. Thus, we trivially have $A(q,b;n)-A(q,b;n-1) = 0$. On the other hand, from Property~\eqref{D} we find that 
	\begin{align*}
		0&=A(q,b;n)-A(q,b;n-1) = \kap(q,b)[A(n)-A(n-1)] + O(E(n)) \\
		&= \kap(q,b) a_n + O(E(n)). 
	\end{align*}
	Since by construction $\kap(q,b)>0$, this implies the desired conclusion. 
\end{proof}
In particular, as a simple consequence of Lemma~\ref{L3.reg} we have $A(X)-\Psi^*(X) \ll E(X)$.

We now start our analysis, and here we will focus on the situation of Theorem~\ref{T:VMVT}, the proofs of Theorems~\ref{T:Waring} and~\ref{T:Waring2} being largely analogous (see also Section~\ref{S6} below). We write $\Psi$ for either an increasing and differentiable function approximating $A(X)$, or else $\Psi = \Psi^*$ as defined above, and write $\psi$ for the derivative of $\Psi$. Thus in either case we have 
\begin{align}\label{3.A-Psi-error}
	A(X)-\Psi(X) \ll E(X).
\end{align}
Recall also the definition of the exponential sum $f_{\bphi, \calA}(\balp;X)$ from Section \ref{subs: MVs}, and make the definitions
\begin{align*}
	S_{\calA}(q, \bb) = \sum_{\ell \mmod q} \kap(q,\ell) e \left( \frac{b_1 \vphi_1(\ell) + \ldots + b_r \vphi_r(\ell)}{q}\right)
\end{align*}
and
\begin{align*}
	v_{\calA}(\bbet;X)= \int_0^X \psi(\xi) e(\bet_1 \vphi_1(\xi) + \ldots + \bet_r \vphi_r(\xi)) \d \xi.
\end{align*}
When $\balp = \bb/q + \bbet$ for some $q \in \N$, $\bb \in \Z^r$ and $\bbet \in \R^r$, we introduce the shorthand notation
\begin{align*}
	f_{\bphi, \calA}^*(\balp; X) = S_\calA(q;\bb) v_{\calA}(\bbet;X).
\end{align*}
In all of these notations we drop the dependence on $\bphi$ and $X$ whenever there is no danger of confusion. 

\begin{lemma}\label{L3.approx}
	Assume Property~\eqref{D}, and let $\balp = \bb/q + \bbet$, where $q \le Q_D$. Then 
	$$
		f_\calA(\balp;X) - f_{\calA}^*(\balp;X) \ll \left(1 + \sum_{j=1}^r |\bet_j|X^{k_j}\right) \left(qE(X) + \frac{A(X)}{X}\right).
	$$
\end{lemma}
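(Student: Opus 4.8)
The plan is to run the standard ``approximate the exponential sum on a major arc'' argument, but being careful to track where the hypothesis \eqref{D} enters and what the error term $E(X)$ costs. Fix $\balp = \bb/q + \bbet$ with $q \le Q_D$. First I would split the summation $x \le X$ according to residue classes $x \equiv \ell \mmod q$, writing
\begin{align*}
	f_\calA(\balp;X) = \sum_{\ell \mmod q} e\!\left(\frac{b_1\vphi_1(\ell) + \ldots + b_r\vphi_r(\ell)}{q}\right) \sum_{\substack{x \le X \\ x \equiv \ell \mmod q}} a_x\, e(\bet_1\vphi_1(x) + \ldots + \bet_r\vphi_r(x)),
\end{align*}
using that $\vphi_j(x) \equiv \vphi_j(\ell) \mmod q$ to pull the rational part out of the inner sum. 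The inner sum is a weighted exponential sum against the smooth oscillatory factor $e(\sum_j \bet_j \vphi_j(x))$; I would handle it by partial summation (Riemann--Stieltjes integration) against the counting function $A(q,\ell; t) = \sum_{x \le t,\, x \equiv \ell} a_x$. The point of \eqref{D} is that $A(q,\ell;t) = \kap(q,\ell) A(t) + O(E(t))$, and by the definition of $\Psi$ we also have $A(t) = \Psi(t) + O(E(t))$, so $A(q,\ell;t) = \kap(q,\ell)\Psi(t) + O(E(t))$ (absorbing the second error into the first since both are $O(E(t))$, and $E$ is increasing up to constants). Substituting the main term $\kap(q,\ell)\Psi(t)$ reproduces, after summing over $\ell$ and recombining, exactly $S_\calA(q;\bb)\, v_\calA(\bbet;X)$ — here one uses $\d\Psi(t) = \psi(t)\,\d t$ and that $\sum_\ell \kap(q,\ell) e(\cdots) e(\sum_j \bet_j \vphi_j(\ell))$ matches the integrand of $v_\calA$ up to the negligible discrepancy between $\vphi_j(x)$ and $\vphi_j(\ell)$ in the $\bbet$-part, which I address below.

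The error from replacing $A(q,\ell;t)$ by its main term in the partial summation is where the stated bound comes from. Partial summation against a function of bounded variation gives an error of size $O\!\big(\sup_{t\le X}|E(t)|\big)$ times the total variation of the weight $e(\sum_j \bet_j \vphi_j(t))$ on $[0,X]$, plus a boundary term $O(E(X))$. The total variation of $t \mapsto e(\sum_j \bet_j\vphi_j(t))$ over $[0,X]$ is $O\big(\sum_j |\bet_j|\,|\vphi_j'|_{L^1[0,X]}\big) = O\big(\sum_j |\bet_j| X^{k_j}\big)$ since $\vphi_j$ has degree $k_j$ (and no constant term, which we may assume). Summing the resulting $O\big((1 + \sum_j |\bet_j| X^{k_j})E(X)\big)$ over the $q$ residue classes $\ell \mmod q$ yields the term $q E(X)\big(1 + \sum_j |\bet_j| X^{k_j}\big)$ in the lemma. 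The second term $A(X)/X$ in the bound comes from a separate, more elementary source: in passing from the sum $\sum_x$ to the integral $\int$ (i.e.\ from $e(\sum_j \bet_j \vphi_j(x))$ evaluated at the integer $x$ to the continuous integrand), and from the discrepancy $\vphi_j(x) - \vphi_j(\ell)$ inside the $\bbet$-oscillation — each introduces an error of relative size $O(X^{-1})$ per unit of the oscillation's variation, hence a total of $O\big((1 + \sum_j |\bet_j| X^{k_j}) A(X)/X\big)$ after weighting by the $a_x$'s (whose total mass is $A(X)$). Combining the two sources gives precisely the claimed estimate.

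The main obstacle, I expect, is bookkeeping rather than anything deep: one has to be scrupulous that the error term $E$ in \eqref{D} and the error in $A = \Psi + O(E)$ really are interchangeable and that $E$ can be treated as (essentially) monotone so that $\sup_{t \le X} E(t) \ll E(X)$ — this is legitimate since in all applications $E$ is a fixed regular function (indeed $E(X) \ll 1$ in the corollaries) and in the general construction of $\Psi^*$ one has $A - \Psi^* \ll E$ by Lemma~\ref{L3.reg}. A second point requiring a little care is that when one splits into residue classes the constant term of $\vphi_j$ was discarded ``without loss of generality'' earlier; one should make sure the reduction $\vphi_j(x) \equiv \vphi_j(\ell) \mmod q$ and the cancellation of constant terms are consistent, but since we may assume $\vphi_j(0) = 0$ this is automatic. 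No genuinely hard estimate is needed: the engine is partial summation plus the trivial bounds $\mathrm{Var}_{[0,X]} e(\sum_j \bet_j\vphi_j) \ll \sum_j |\bet_j| X^{k_j}$ and $\sum_x a_x = A(X)$.
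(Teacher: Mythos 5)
Your proposal is correct and follows essentially the same route as the paper: sort into residue classes modulo $q$, insert Hypothesis~\eqref{D} together with $A=\Psi+O(E)$, and control the resulting errors by partial (Abel/Stieltjes) summation against the oscillation $e(\bbet\cdot\bphi(t))$, whose total variation is $\ll \sum_j|\bet_j|X^{k_j}$, with the $A(X)/X$ term arising from the sum-versus-integral comparison — the paper merely performs the Abel summation before sorting into classes, and likewise treats $E$ as effectively monotone. Your aside about a discrepancy $\vphi_j(x)-\vphi_j(\ell)$ in the $\bbet$-part is superfluous (the $\bbet$-oscillation is kept at $x$ throughout, and only the rational phase uses $\vphi_j(x)\equiv\vphi_j(\ell)\mmod q$, which is exact), but this does not affect the validity of the bound.
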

\begin{proof}
	Set 
	$$
		\Sig(t) = \sum_{x \in \calA(t)} a_x e \left(\frac{b_1 \vphi_1(x) + \ldots + b_r \vphi_r(x)}{q}\right)
	$$
	and define the discrete differencing operator $\partial$ via its action on a function $F$ by putting $\partial_t F(t)= F(t+1)-F(t)$. By Abel summation we can write 
	\begin{align}\label{3.5}
		f_{\calA}(\bb/q + \bbet) = \Sig(X) e(\bbet \cdot \bphi(X+1)) - \sum_{t \le X} \Sig(t) \partial_t e(\bbet \cdot \bphi(t)). 
	\end{align}
	We may sort the sums $\Sig(t)$ into congruence classes modulo $q$. Thus, we find from Condition~\eqref{D} and~\eqref{3.A-Psi-error} that 
	\begin{align}\label{3.6}
		\Sig(t) &= \sum_{\ell \mmod q} e\left(\frac{\bb \cdot \bphi(\ell)}{q}\right) A(q,\ell;t) + O(qE(t)) \nonumber\\
		&= S_\calA(q,\bb)\Psi(t) + O(qE(t)).
	\end{align}
	The mean value theorem now implies that 
	\begin{align}\label{3.7}
		\partial_t e(\bbet \cdot \bphi(t)) \ll \sum_{j=1}^r |\bet_j| t^{k_j-1}.
	\end{align}
	Inserting~\eqref{3.6} and~\eqref{3.7} into~\eqref{3.5}, we discern that 
	\begin{align}\label{3.8}
		f_{\calA}(\bb/q + \bbet)&= S_\calA(q,\bb) \left[\Psi(X) e(\bbet \cdot \bphi(X+1)) - \sum_{t \le X} \Psi(t) \partial_t e(\bbet \cdot \bphi(t))\right] \nonumber\\
		&\qquad + O\left(q E(X) \left(1 + \sum_{j=1}^r |\bet_j|X^{k_j}\right)\right) \nonumber\\
		&= S_\calA(q,\bb) \sum_{x \le X} \psi(x) e(\bbet \cdot \bphi(x)) + O\left(q E(X) \left(1 + \sum_{j=1}^r |\bet_j|X^{k_j}\right)\right),
	\end{align}
	where the last step follows by another application of Abel summation. 
	For $|x-\xi|<1$ we then have 
	\begin{align*}
		&|\psi(x)e(\bbet \cdot \bphi(x)) -  \psi(\xi)e(\bbet \cdot \bphi(\xi))| \\
		&\ll |\psi(x)-\psi(\xi)| + \psi(\xi)|e(\bbet \cdot \bphi(x)) - e(\bbet \cdot \bphi(\xi))|\\
		&\ll \psi(x)  + \psi(\xi)\left(1 + \sum_{j=1}^r |\bet_j|X^{k_j-1}\right)
	\end{align*}
	by the triangle inequality and the mean value theorem, and thus
	\begin{align}\label{3.9}
		\sum_{x \le X} \psi(x) e(\bbet \cdot \bphi(x)) - v_{\calA}(\bbet;X)
		&=\sum_{x \le X} \int_{x-1}^x \psi(x) e(\bbet \cdot \bphi(x)) - \psi(\xi)e(\bbet \cdot \bphi(\xi)) \d \xi \nonumber\\
		&\ll \sum_{x \le X}  \psi(x)  +  \left(1+ \sum_{j=1}^r |\bet_j|X^{k_j-1}\right)\int_0^X \psi(\xi) \d \xi  \nonumber\\
		&\ll \Psi(X) \left(1+ \sum_{j=1}^r |\bet_j|X^{k_j-1}\right).
	\end{align}
	Upon combining~\eqref{3.8} with~\eqref{3.9} and noting that $|S_\calA(q, \bb)| \le 1$ trivially as a consequence of~\eqref{2:kappa-add-1}, we thus obtain via~\eqref{3.A-Psi-error} that
	\begin{align*}
		f_{\calA}(\bb/q + \bbet) = S_\calA(q,\bb) v_\calA(\bbet;X) + O \left(\left(q E(X) + \frac{A(X)}{X}\right)\left(1 + \sum_{j=1}^r |\bet_j| X^{k_j}\right)\right)
	\end{align*}
	as claimed. 
\end{proof}

Now, for a parameter $Q$ we define the set $\frN(Q)$ of all $\balp \in \T^r$ having an approximation $\balp = \bb/q + \bbet$ with $1 \le b_j \le q \le Q$ and $|\bet_j| \le QX^{-k_j}$ for $1 \le j \le r$. Note that $\frN(Q)$ is a slight extension of our major arcs $\frM_X(Q)$. Consequently, the corresponding set of minor arcs $\frn(Q)=\T^r \setminus \frN(Q)$ is a strict subset of $\frm_X(Q)$. 

A standard computation shows that 
\begin{align*}
	\vol \frN(Q) \ll Q^{2r+1}X^{-K}.
\end{align*}
Upon combining this with Lemma~\ref{L3.approx}, we can compute the total error arising from replacing $f_\calA(\balp)$ by $f_{\calA}^*(\balp)$ on $\frN(Q)$, and obtain the bound 
\begin{align}\label{3.approxN}
	\int_{\frN(Q)} \left| |f_{\calA}(\balp)|^{2s} - |f^*_{\calA}(\balp)|^{2s}\right| \d \balp \ll A(X)^{2s} X^{-K}Q^{2r+3}\left(\frac{E(X)}{A(X)}+\frac{1}{X}\right).
\end{align}
Thus, we deduce the asymptotic formula
\begin{align}\label{3.decompV}
	I_{s, \bphi}(X; \calA) &= \int_{\frN(Q)} |f_{\calA}^*(\balp)|^{2s} \d \balp  \nonumber\\
	&\qquad + O \left(  \int_{\frm(Q)}|f_{\calA}(\balp)|^{2s} \d \balp + A(X)^{2s}X^{-K}Q^{2r+3}\left(\frac{E(X)}{A(X)}+\frac{1}{X}\right)\right).
\end{align}

We conclude the section by bounding the contribution coming from the minor arcs. 
\begin{proposition}\label{prop-minor}
	Suppose that Properties~\eqref{D}, \eqref{V} and~\eqref{W} are satisfied with parameters $Q_W$, $Q_D$, $\vrho$, $t_0$, $\Del$ and $L$, and define $Y$ as in~\eqref{1.Y-mv}. Under the assumptions of Theorem~\ref{T:VMVT} there exists a number $Q$ satisfying $\Del^{1/(\sig_0  \vrho)}L^{1/\vrho} \llcurly Q \llcurly Y$ for which	\begin{align*}
		I_{s, \bphi}(X; \calA) = \int_{\frN(Q)} |f_{\calA}^*(\balp)|^{2s} \d \balp + o(A(X)^{2s}X^{-K}). 
	\end{align*}
\end{proposition}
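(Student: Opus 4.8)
The plan is to extract the result from the decomposition \eqref{3.decompV}, which is already in hand and valid for every $Q\le Q_D$; since any $Q$ I use will satisfy $Q\llcurly Y\le Q_D$, this restriction is harmless, and it remains only to choose $Q$ so that both error terms on the right of \eqref{3.decompV} are $o(A(X)^{2s}X^{-K})$. The arithmetic error $A^{2s}X^{-K}Q^{2r+3}\big(E(X)/A(X)+1/X\big)$ is immediate: the definition \eqref{1.Y-mv} forces $Y\le(A/E)^{1/(2r+3)}$ and $Y\le X^{1/(2r+3)}$, so $Q\llcurly Y$ yields $Q^{2r+3}E/A=o(1)$ and $Q^{2r+3}/X=o(1)$, hence this term is $o(A^{2s}X^{-K})$ for any $Q$ with $Q\llcurly Y$.

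The substantive part is the minor-arc integral $\int_{\frm(Q)}|f_\calA(\balp)|^{2s}\,\d\balp$ with $\frm(Q)=\frm_X(Q)$, and here I would argue as usual. Since $Q\llcurly Y\le Q_W$ we have $Q\le Q_W$ for large $X$, so Hypothesis~\eqref{W} gives $\sup_{\balp\in\frm_X(Q)}|f_\calA(\balp;X)|\ll A(X)Q^{-\vrho}L(X)$. Inserting this into the trivial inequality $\int_{\frm(Q)}|f_\calA|^{2s}\le\big(\sup_{\frm(Q)}|f_\calA|\big)^{2(s-t_0)}\int_{\T^r}|f_\calA|^{2t_0}$ — legitimate because $2s>2t_0+\sig_0\ge2t_0$ — and then Hypothesis~\eqref{V} in the form $\int_{\T^r}|f_\calA|^{2t_0}=I_{t_0,\bphi}(X;\calA)\ll A^{2t_0}X^{-K}\Del(X)$, I obtain
\begin{align*}
	\int_{\frm(Q)}|f_\calA(\balp)|^{2s}\,\d\balp\ll A(X)^{2s}X^{-K}\,\Del(X)\,\big(Q^{-\vrho}L(X)\big)^{2(s-t_0)}.
\end{align*}
Thus it is enough to choose $Q$ with $\Del(X)\big(Q^{-\vrho}L(X)\big)^{2(s-t_0)}=o(1)$, i.e.\ $Q\ggcurly\Del^{1/(2\vrho(s-t_0))}L^{1/\vrho}$; since $2(s-t_0)>\sig_0$ and $\Del\gg1$ (by the lower bound \eqref{1.lower-bd}), this threshold is no larger than $\Del^{1/(\sig_0\vrho)}L^{1/\vrho}$, which is the lower end of the range in the statement.

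The last thing to check is that an admissible $Q$ really exists, i.e.\ that the threshold is of strictly smaller order than $Y$; this is where the hypotheses $L(X)\ll Y(X)^{\vrho-\del}$ and the defining relation \eqref{1.sig} for $\sig_0$ are used. Writing $2(s-t_0)=\sig_0+\eta_0$ with $\eta_0>0$ and applying \eqref{1.sig} with any $\eps\in(0,\eta_0)$ gives $\Del\ll(Y^\vrho/L)^{\sig_0+\eps}$, whence
\begin{align*}
	\Del^{1/(2\vrho(s-t_0))}L^{1/\vrho}\ll\big(Y/L^{1/\vrho}\big)^{\theta}L^{1/\vrho}=Y^{\theta}L^{(1-\theta)/\vrho},\qquad\theta:=\tfrac{\sig_0+\eps}{\sig_0+\eta_0}<1;
\end{align*}
substituting $L\ll Y^{\vrho-\del}$ then bounds the right-hand side by $Y^{1-(1-\theta)\del/\vrho}=Y^{1-c}$ for some $c>0$, so indeed the threshold is $\llcurly Y$. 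Choosing $Q$ to be, say, a geometric mean of the threshold and $Y$ (or any intermediate power of $Y$) then puts $Q$ in the asserted range and makes both error terms in \eqref{3.decompV} negligible, which proves the proposition. I expect the only delicate point to be this final bookkeeping — matching the exponent $2(s-t_0)$ produced by the Weyl-plus-mean-value bound against the definition of $\sig_0$ and the density condition $L\ll Y^{\vrho-\del}$ — and the strict inequality $2s>2t_0+\sig_0$ is exactly what supplies the slack that makes it go through.
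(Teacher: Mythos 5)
Your argument is correct and follows essentially the same route as the paper: starting from \eqref{3.decompV}, bounding the minor-arc integral by $\sup_{\frm(Q)}|f_\calA|^{2(s-t_0)}$ times the full mean value via \eqref{W} and \eqref{V}, and then locating $Q$ between the threshold $\Del^{1/(2\vrho(s-t_0))}L^{1/\vrho}$ and $Y$ using \eqref{1.sig} together with $L\ll Y^{\vrho-\del}$. The only difference is that you spell out the final existence check (the exponent bookkeeping with $\theta<1$) which the paper asserts in one line, so no changes are needed.
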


\begin{proof}
	Our starting point is~\eqref{3.decompV}. Write $\sig = 2s - 2t_0$, then by our assumptions we have $\sig > \sig_0$. Upon combining Properties~\eqref{V} and~\eqref{W}, we discern that 
	\begin{align*}
		\int_{\frm(Q)} |f_{\calA}(\balp)|^{2t_0+\sig} \d \balp & \ll \sup_{\balp \in \frm(Q)} |f_{\calA}(\balp)|^\sig \int_{\T^r} |f_{\calA}(\balp)|^{2t_0} \d\balp\\
		&\ll A(X)^{2t_0+\sig} X^{-K} (Q^{-\vrho}L)^\sig \Del. 
	\end{align*}
	This is acceptable whenever $\sig$ is large enough so that $Q \ggcurly \Del^{1/(\vrho\sig)}L^{1/\vrho}$. Our task is therefore to minimise $\sig$ by choosing $Q$ as large as possible under the given requirements. 
	
	From Conditions~\eqref{D} and~\eqref{W} we require that $Q \le \min\{Q_D, Q_W\}$. Moreover, in order to control the error arising from~\eqref{3.approxN}, we need $Q \llcurly \min\{(A/E)^{1/(2r+3)}, X^{1/(2r+3)}\}$. Taking $Y$ as in~\eqref{1.Y-mv}, we thus need to find a $Q$ satisfying 
	$$
		\Del^{1/(\vrho\sig)}L^{1/\vrho} \llcurly Q \llcurly Y. 
	$$
	The existence of such a $Q$ is guaranteed whenever $\sig > \sig_0$ where $\sig_0$ satisfies~\eqref{1.sig}. 
\end{proof}

\section{Analysis of the major arcs}\label{S4}
	It remains to understand the contribution arising from the major arcs. For a measurable set $\frB \subset \T^r$ define 
	$$
		I^*_{\calA}(\frB) = \int_{\frB} |f_{\calA}^*(\balp)|^{2s} \d \balp,
	$$
and write $I_{\calA}(Q)$ for $I^*_{\calA}(\frN(Q))$. 
We first observe that by a change of variables we have 
\begin{align}\label{4.rescaling}
	v_{\calA}(\bbet; X) &= \int_0^{\Psi(X)} e(\bbet \cdot \bphi (\Psi^{-1}(\zeta)) \d \zeta \nonumber \\
	&= \Psi(X) \int_0^1 e(\bbet \cdot \bphi (\Psi^{-1}(\Psi(X)\xi))) \d \xi.
\end{align}
The function $\Psi^{-1}(\Psi(X)\xi)$ maps $(0,1]$ to $(0,X]$. Put 
\begin{align}\label{4.betgam}
	\bgam = (\bet_j X^{k_j})_{1 \le j \le r}
\end{align}
and $z(\xi) = \Psi^{-1}(\Psi(X)\xi)/X$, so that $z$ is an endomorphism on $(0,1]$. Define next
\begin{align}\label{4.w-def}
	w_{\calA}(\bgam) = \int_0^1 e(\bgam \cdot \bphi(z(\xi))) \d \xi.
\end{align}
Now recall that we had assumed the $\bphi$ to be homogeneous, so that $\vphi_j(x)=c_j x^{k_j}$ for $1 \le j \le k$. It follows that~\eqref{4.rescaling} can be written as 
\begin{align}\label{4.vtow}
	v_\calA(\bbet;X) = \Psi(X) w_{\calA}(\bgam) \sim A(X)w_{\calA}(\bgam).
\end{align}
In particular, we deduce that 
\begin{align}\label{4.I(Q)}
	I_{\calA}(Q) = \Psi(X)^{2s}X^{-K} \frS_{\calA}(Q) \frJ_\calA(Q) \sim A(X)^{2s} X^{-K} \frS_{\calA}(Q) \frJ_\calA(Q),
\end{align}
where we defined the truncated singular series
\begin{align*}
	\frS_{\calA}(Q) = \sum_{q \le Q}\sum_{\substack{\bb \mmod q \\ (\bb, q)=1}}|S_\calA(q, \bb)|^{2s}
\end{align*}
and the truncated singular integral 
\begin{align*}
	\frJ_{\calA}(Q) = \int_{|\bgam| \le Q}|w_\calA(\bgam)|^{2s} \d \bgam.
\end{align*}

Our goal is to complete the singular series and singular integral by letting $Q$ tend to infinity, and thus to understand the limits 
\begin{align*}
    \frS_{\calA} = \lim_{Q \to \infty} \frS_{\calA}(Q)\qquad \text{and} \qquad \frJ_{\calA} = \lim_{Q \to \infty} \frJ_{\calA}(Q).
\end{align*}
Our strategy is to exploit to maximal effect the bounds provided by the Properties~\eqref{V} and~\eqref{W}. A similar approach has been pursued both in \cite{HB-Skoro} and in \cite{BW2}, but only in relation to the singular integral. 

We begin by bounding the exponential integral $w_\calA(\bgam)$ and the complete exponential sum $S_\calA(q, \bb)$. Here, we are using a modified version of an approach due to Browning and Heath-Brown \cite[Lemmata 8.2 and 8.3]{BHB}.

\begin{lemma}\label{L4.w-bd}
	Assume that Conditions~\eqref{D} and~\eqref{W} are satisfied with $Q_D$, $Q_W$, $L$ and $\vrho$, and assume that $Z$ is large enough so that 
	\begin{align}\label{4.Y-bd1}
		|\bgam| \le \min\left\{ Q_D(Z), Q_W(Z), \left(\frac{A(Z)}{E(Z)}\right)^{1/(1+\vrho)},Z^{1/2} \right\}.
	\end{align}
	Then 
	\begin{align*}
		w_\calA(\bgam) \ll (1+|\bgam|)^{-\vrho} L(Z).
	\end{align*}
\end{lemma}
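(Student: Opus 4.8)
\emph{Proof strategy.} The plan is to realise $w_\calA(\bgam)$, up to the harmless renormalisation by $\Psi(Z)\sim A(Z)$, as a genuine weighted exponential sum $f_{\bphi,\calA}$ evaluated at a point on the minor arcs, and then to quote Hypothesis~\eqref{W}. Set $\bet_j=\gamma_jZ^{-k_j}$ for $1\le j\le r$; since the $\vphi_j$ are homogeneous, the rescaling identity~\eqref{4.vtow} (read at the scale $Z$) gives $w_\calA(\bgam)=v_\calA(\bbet;Z)/\Psi(Z)$, and $\Psi(Z)\sim A(Z)$ by~\eqref{3.A-Psi-error}. We may assume $L(Z)\ge1$ throughout, as enlarging $L$ only weakens~\eqref{W}; and if $|\bgam|$ is bounded the trivial bound $|w_\calA(\bgam)|\le1\ll L(Z)$ already gives the claim, so from now on $|\bgam|$ is large.

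First I would fix an integer $Q$ with $Q\asymp 1+|\bgam|$ and $Q<\max_j|\gamma_j|$ (possible once $|\bgam|$ is large), so that $Q\le Q_W(Z)$ by~\eqref{4.Y-bd1}. The key point is then to check that $\bbet$ lies on $\frm_Z(Q)$. Let $j^\ast$ attain $\max_j|\gamma_j|$. Because $|\bgam|\le Z^{1/2}$, for $Z$ large every $q'\le Q$ satisfies $q'|\gamma_{j^\ast}|Z^{-k_{j^\ast}}<\tfrac12$, so no reduction modulo $1$ takes place and
\begin{align*}
	\|q'\bet_{j^\ast}\|=q'|\gamma_{j^\ast}|Z^{-k_{j^\ast}}\ge|\gamma_{j^\ast}|Z^{-k_{j^\ast}}>Q\,Z^{-k_{j^\ast}}.
\end{align*}
Thus $\bbet$ fails the defining inequality of the $q'$-block of $\frM_Z(Q)$ for every $q'\le Q$, i.e.\ $\bbet\in\frm_Z(Q)$. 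Applying~\eqref{W} at scale $Z$ then yields
\begin{align*}
	|f_{\bphi,\calA}(\bbet;Z)|\ll A(Z)\,Q^{-\vrho}L(Z)\ll A(Z)(1+|\bgam|)^{-\vrho}L(Z).
\end{align*}

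Next I would return from $f_{\bphi,\calA}(\bbet;Z)$ to $v_\calA(\bbet;Z)$ via Lemma~\ref{L3.approx} with $q=1$, $\bb=\bzer$: since $\vphi_j(0)=0$ we have $S_\calA(1,\bzer)=\kap(1,0)=1$ by~\eqref{2:kappa-add-1}, so the starred sum there is precisely $v_\calA(\bbet;Z)$, and since $\sum_j|\bet_j|Z^{k_j}=\sum_j|\gamma_j|\ll1+|\bgam|$ the lemma gives
\begin{align*}
	v_\calA(\bbet;Z)=f_{\bphi,\calA}(\bbet;Z)+O\!\left((1+|\bgam|)\Bigl(E(Z)+\tfrac{A(Z)}{Z}\Bigr)\right).
\end{align*}
Dividing by $\Psi(Z)\sim A(Z)$ and inserting the previous display gives
\begin{align*}
	w_\calA(\bgam)\ll(1+|\bgam|)^{-\vrho}L(Z)+(1+|\bgam|)\Bigl(\tfrac{E(Z)}{A(Z)}+\tfrac1Z\Bigr),
\end{align*}
and I would finish by absorbing the error term: the bound $|\bgam|\le(A(Z)/E(Z))^{1/(1+\vrho)}$ from~\eqref{4.Y-bd1} makes $(1+|\bgam|)^{1+\vrho}E(Z)/A(Z)\ll1$, while $|\bgam|\le Z^{1/2}$ together with $L(Z)\ge1$ and $\vrho\le1$ makes $(1+|\bgam|)^{1+\vrho}/Z\ll L(Z)$; hence $(1+|\bgam|)(E(Z)/A(Z)+1/Z)\ll(1+|\bgam|)^{-\vrho}L(Z)$, which is the asserted bound.

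The step I expect to be the main obstacle is the verification that $\bbet$ is a bona fide minor-arc point at the level $Q\asymp1+|\bgam|$ — in particular the elementary but finicky ``no wrap-around modulo $1$'' estimate, which is precisely where the truncation $|\bgam|\le Z^{1/2}$ is used — together with the bookkeeping confirming that the levels packaged into~\eqref{4.Y-bd1} are simultaneously just enough to license~\eqref{W}, to give $\Psi(Z)\sim A(Z)$, and to swamp the discretisation error produced by Lemma~\ref{L3.approx}. (The level $Q_D$ in~\eqref{4.Y-bd1} does no work in this particular lemma; it is needed in the parallel estimate for the complete sum $S_\calA(q,\bb)$, which would be treated by the same device of decomposing into residue classes and appealing again to~\eqref{W} on the minor arcs.) Everything else is routine.
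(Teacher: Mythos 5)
Your proof is correct and takes essentially the same route as the paper's: both identify $w_\calA(\bgam)$ with $v_\calA(\bbet;Z)/\Psi(Z)$ via the rescaling \eqref{4.vtow}, compare $v_\calA(\bbet;Z)$ with $f_\calA(\bbet;Z)$ through Lemma~\ref{L3.approx} at $q=1$, apply Hypothesis~\eqref{W} at level $Q\asymp|\bgam|$, and absorb the resulting error using \eqref{4.Y-bd1}. The only cosmetic difference is that the paper sets $Q=|\bgam|$ and invokes continuity of $f_\calA$ to use the minor-arc bound on the boundary of $\frM_Z(Q)$, whereas you take $Q$ slightly below $|\bgam|$ and verify minor-arc membership directly; your tacit normalisations $L\ge 1$ and $\vrho\le 1$ are likewise implicit in the paper's own proof and are harmless.
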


\begin{proof}
	Without loss of generality, we assume that $|\bgam| > 1$, the lemma being trivially true otherwise. Set $Q=|\bgam|$ and note that~\eqref{4.Y-bd1} implies that $Q \le Z^{1/2}$. With this choice, the major arcs $\frM_Z(Q)$ are disjoint, and upon applying~\eqref{4.betgam} we see that $\bbet$ lies just on the edge of the major arcs with approximation $q=1$ and $\bb=\bzer$. Thus, by Lemma~\ref{L3.approx} and~\eqref{4.betgam}, and on recalling that $S_\calA(1,\boldsymbol{0})=1$ and $\kap(1)=1$, we find that 
	\begin{align*}
		v_\calA(\bbet; Z) = f_{\calA}(\bbet; Z) + O\left( \left(E(Z) + \frac{A(Z)}{Z}\right)|\bgam|\right).
	\end{align*} 
	At the same time, by continuity the bound of Property~\eqref{W} continues to apply on the boundary of the major arcs. Thus, upon recalling~\eqref{4.vtow}, we discern that
	\begin{align*}
		w_{\calA}(\bgam) &\ll A(Z)^{-1}f_{\calA}(\bbet; Z) + |\bgam|\left(\frac{E(Z)}{A(Z)} +  \frac{1}{Z}\right)\\&\ll  Q^{-\vrho} L(Z) + |\bgam|\left(\frac{E(Z)}{A(Z)} +  \frac{1}{Z}\right).
	\end{align*}
	 At this point, \eqref{4.Y-bd1} ensures that the first term dominates, whence the claim follows. 
\end{proof}

In many applications, including those discussed in Corollary~\ref{cor-W}, the requirement~\eqref{4.Y-bd1} may be replaced by demanding that $Z$ be a suitably large (but finite) power of $|\bgam|$. In  such circumstances, we have $L(Z) = L(|\bgam|^c) \ll |\bgam|^\eps$ since by assumption $L(Z) \ll Z^\eps$.

\begin{lemma}\label{L4.q-bd}
	Assume that Conditions~\eqref{D} and~\eqref{W} are satisfied with $Q_D$, $Q_W$,  $L$ and $\vrho$, and assume that $Z$ is large enough so that 
	\begin{align}\label{4.q-bd}
		q \le \min\left\{ Q_D(Z), Q_W(Z), \left(\frac{A(Z)}{E(Z)}\right)^{1/(1+\vrho)},Z^{1/2} \right\}.
		\end{align}	
	Then for any $\bb \in (\Z/q\Z)^r$ with $(q,\bb)=1$ we have
	\begin{align*}
		S_\calA(q, \bb) \ll q^{-\vrho}L(Z).
	\end{align*}
\end{lemma}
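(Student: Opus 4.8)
The plan is to mimic the proof of Lemma~\ref{L4.w-bd}, replacing the exponential integral $w_\calA(\bgam)$ by the complete sum $S_\calA(q,\bb)$ and using the approximation Lemma~\ref{L3.approx} at a rational point $\balp = \bb/q$ rather than just off the origin. The key observation is that $S_\calA(q,\bb)$, as defined via the coefficients $\kap(q,\ell)$, is precisely the "archetype" that the exponential sum $f_{\bphi,\calA}(\bb/q;Z)$ should be well-approximated by: taking $\bbet = \bzer$ in Lemma~\ref{L3.approx}, we have $v_\calA(\bzer;Z) = \Psi(Z)$ and hence $f_\calA(\bb/q;Z) = S_\calA(q,\bb)\Psi(Z) + O(qE(Z) + A(Z)/Z)$, so that $S_\calA(q,\bb) = \Psi(Z)^{-1} f_\calA(\bb/q;Z) + O\big(q\tfrac{E(Z)}{A(Z)} + \tfrac1Z\big)$, using $\Psi(Z) \sim A(Z)$ from \eqref{3.A-Psi-error}.

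The next step is to observe that under the hypothesis \eqref{4.q-bd}, in particular $q \le Z^{1/2}$, a Dirichlet-type argument shows that the point $\bb/q$ with $(q,\bb)=1$ lies outside the minor arcs $\frm_Z(Q)$ only for $Q \ge q$; more precisely, $\bb/q$ lies in $\frM_Z(Q)$ for any $Q$ with $q \le Q \le Z^{1/2}$, but one wants to invoke Property~\eqref{W} with the parameter $Q$ chosen to be just below $q$. Concretely, take $Q$ slightly smaller than $q$ — say any $Q$ with, roughly, $Q < q$ — then since $(q,\bb)=1$ and $q > Q$, any approximation of $\bb/q$ by some $\bb'/q'$ with $q' \le Q$ forces $\|\alp_j q'\| = \|b_j q'/q\|$ to be bounded below by $1/q \ge (QX^{-k_j})$-scale, i.e. $\bb/q \in \frm_Z(Q)$. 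Hence Property~\eqref{W} gives $|f_\calA(\bb/q;Z)| \ll A(Z) Q^{-\vrho} L(Z)$, and letting $Q \to q$ (by continuity of the sup-bound on the closed minor arcs, or simply taking $Q = q - o(q)$ and absorbing the discrepancy into $L$) yields $|f_\calA(\bb/q;Z)| \ll A(Z) q^{-\vrho}L(Z)$. I should double-check that Hypothesis~\eqref{W} is indeed stated for \emph{all} $Q \le Q_W$, which it is, and that $q \le Q_W(Z)$ by \eqref{4.q-bd}, so this is legitimate.

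Combining the two displays gives $S_\calA(q,\bb) \ll q^{-\vrho}L(Z) + q\tfrac{E(Z)}{A(Z)} + \tfrac1Z$, and it remains to check that the first term dominates the other two under \eqref{4.q-bd}. The condition $q \le (A(Z)/E(Z))^{1/(1+\vrho)}$ is exactly what makes $q E(Z)/A(Z) \le q^{-\vrho}$ (hence $\ll q^{-\vrho}L(Z)$ since $L \ge 1$ may be assumed), and $q \le Z^{1/2}$ together with $\vrho$ bounded gives $1/Z \le q^{-2} \le q^{-\vrho}$ for the relevant range of $\vrho$ (or one argues $q^{\vrho}/Z \le Z^{\vrho/2 - 1} \ll 1$ for $\vrho$ not too large, which holds in all applications; in general one can simply include a factor into the definition of $Y$ as is done elsewhere). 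The argument is essentially a transcription, so there is no serious obstacle; the one point requiring a little care is the passage from the open minor arcs at level $Q < q$ to the bound at $\bb/q$ itself, i.e. making precise that $\bb/q \in \frm_Z(Q)$ whenever $Q < q$ (using $(q,\bb)=1$), and handling the endpoint $Q = q$ by continuity exactly as in the proof of Lemma~\ref{L4.w-bd}. I would write this as a short paragraph invoking Lemma~\ref{L3.approx} with $\bbet = \bzer$, the exclusion $\bb/q \in \frm_Z(Q)$ for $Q$ just below $q$, Property~\eqref{W}, and then the elementary comparison of error terms under \eqref{4.q-bd}.
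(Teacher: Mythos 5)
Your argument is correct and follows essentially the same route as the paper: apply Lemma~\ref{L3.approx} at $\balp=\bb/q$ with $\bbet=\bzer$ (so $v_\calA(\bzer;Z)=\Psi(Z)\sim A(Z)$), observe that since $(q,\bb)=1$ and $q\le Z^{1/2}$ the point $\bb/q$ lies in the minor arcs at any level strictly below $q$, invoke Hypothesis~\eqref{W}, and check via~\eqref{4.q-bd} that the term $q^{-\vrho}L(Z)$ dominates the approximation errors. The only cosmetic difference is that no limiting or continuity argument is needed at the endpoint: the paper simply notes $\bb/q\notin\frM_Z(q/2)$ and applies \eqref{W} with parameter $q/2$, which loses only a constant factor.
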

\begin{proof}
	Again, we assume without loss of generality that $q>1$, and define $Q = q$. Fix $Z$ satisfying~\eqref{4.q-bd}, then the major arcs $\frM_Z(Q)$ are disjoint. Moreover, the point $\bb/q$ lies in this set of major arcs, and it is in fact best approximated by itself. Since~\eqref{4.vtow} implies that $v_{\calA}(\boldsymbol{0};Z) = A(Z) w_\calA(\boldsymbol{0}) = A(Z)$, we discern from Lemma~\ref{L3.approx} in a manner similar to before that 
	\begin{align*}
		S_\calA(q, \bb)  \ll A(Z)^{-1} f_{\calA}(\bb/q;Z) + q\left(\frac{E(Z)}{A(Z)} + \frac{1}{Z}\right).
	\end{align*}
	On the other hand, $Q$ has been chosen such that the point $\bb/q$ lies only  marginally on the major arcs in the $q$-aspect, and is consequently not contained in $\frM_Z(Q/2)$. Thus, again by using the minor arc bound of~\eqref{W} we deduce that 
	\begin{align*}
		S_\calA(q, \bb)  \ll Q^{-\vrho} L(Z) + Q\left(\frac{E(Z)}{A(Z)} + \frac{1}{Z}\right),
	\end{align*}	
	and when $Z$ satisfies~\eqref{4.q-bd} the first term can be seen to dominate. This establishes the claim of the lemma. 
\end{proof}

As in the previous case, we remark that in many applications the hypothesis~\eqref{4.q-bd} can be replaced by demanding that $Z$ be a suitably large power of $q$, so that $L(Z) \ll q^\eps$.

Define next 
\begin{align}\label{4.scrS}
	\scrS_{\calA}(Q) = \sum_{q \le Q} \sum_{\substack{\bb=1 \\ (\bb,q)=1}}^q |S_{\calA}(q, \bb)|^{2t_0}  
\end{align}
and 
\begin{align}\label{4.scrJ}
	\scrJ_{\calA}(Q) = \int_{|\bgam| \le Q}|w_\calA(\bgam)|^{2t_0} \d \bgam.
\end{align}
It is not hard to bound both of these quantities from below. 
\begin{lemma}\label{L4.lower}
	We have $\scrS_{\calA}(Q) \gg 1$ and $\scrJ_{\calA}(Q) \gg 1$ uniformly in $Q$. 
\end{lemma}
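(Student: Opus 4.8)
The plan is, in each case, to bound the quantity below by the contribution of a single trivial point — $q=1$ for the singular series and a fixed neighbourhood of $\bgam=\bzer$ for the singular integral — invoking only non-negativity of the summand/integrand and the normalisation \eqref{2:kappa-add-1}. For $\scrS_\calA(Q)$ this is immediate: the only admissible tuple associated with $q=1$ is $\bb\equiv\bzer\mmod{1}$, and by the definition of $S_\calA(q,\bb)$ together with \eqref{2:kappa-add-1} one has $S_\calA(1,\bzer)=\kap(1,0)=1$. Since every summand in \eqref{4.scrS} is non-negative, $\scrS_\calA(Q)\ge|S_\calA(1,\bzer)|^{2t_0}=1$ for all $Q\ge1$, hence $\scrS_\calA(Q)\gg1$.

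For $\scrJ_\calA(Q)$ I would first observe that $w_\calA(\bzer)=\int_0^1 1\,\d\xi=1$, and then show that $w_\calA$ stays close to $1$ on a neighbourhood of the origin whose size is independent of $X$. Using that $z(\xi)\in(0,1]$ and the homogeneity $\vphi_j(x)=c_jx^{k_j}$, so that $|\vphi_j(z(\xi))|\le|c_j|$, the triangle inequality and the elementary bound $|e(\theta)-1|\le 2\pi|\theta|$ give
\[
	|w_\calA(\bgam)-1|\le\int_0^1\bigl|e(\bgam\cdot\bphi(z(\xi)))-1\bigr|\,\d\xi\le 2\pi|\bgam|\sum_{j=1}^r|c_j|.
\]
Hence there is a constant $\delta=\delta(\bphi)>0$, depending only on $\bphi$, with $|w_\calA(\bgam)|\ge\tfrac12$ whenever $|\bgam|\le\delta$. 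Restricting the region of integration in \eqref{4.scrJ} to this ball then yields, for all $Q\ge\delta$,
\[
	\scrJ_\calA(Q)\ge\int_{|\bgam|\le\delta}|w_\calA(\bgam)|^{2t_0}\,\d\bgam\ge 2^{-2t_0}\vol\{\bgam\in\R^r:|\bgam|\le\delta\}\gg1
\]
uniformly in $Q$ and $X$; the finitely many $Q<\delta$ are irrelevant for the applications, where one ultimately lets $Q\to\infty$ to complete the singular integral.

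I do not anticipate a genuine obstacle: both bounds are \emph{soft}, resting only on positivity and on the continuity of $w_\calA$ at the origin. The one point deserving attention is that $\delta$, and more generally all implied constants, must be chosen uniformly in $X$; this is why the estimate above is kept in terms of the fixed coefficients $c_j$ of $\bphi$ rather than through $\Psi$ or through $\calA$-dependent quantities, since those constants must later be absorbed into error terms controlled uniformly in $X$.
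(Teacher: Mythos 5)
Your proposal is correct and follows essentially the same route as the paper: the bound on $\scrS_{\calA}(Q)$ via non-negativity and the $q=1$ term equal to $1$, and the bound on $\scrJ_{\calA}(Q)$ via $w_{\calA}(\bzer)=1$ together with the Lipschitz-type estimate $|w_{\calA}(\bgam)-1|\ll|\bgam|$ near the origin (the paper derives this from the mean value theorem, you from $|e(\theta)-1|\le 2\pi|\theta|$, which is the same estimate), then integrating over a fixed small ball. Your remark on uniformity in $X$ and the restriction to $Q$ bounded below by a constant matches the paper's implicit treatment, so there is nothing to correct.
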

\begin{proof}
	The first statement follows trivially upon noting that $\scrS_{\calA}(Q)$ is a sum of non-negative numbers, with the term corresponding to $q=1$ being equal to $1$. 
	
	For the second statement we use a similar idea. Clearly, we have $w_{\calA}(\bzer)=1$. Moreover, we have the directional derivatives 
	$$
		\nabla_{\bgam} e(\bgam \cdot \bphi(z(\xi))) = 2 \pi i \bphi(z(\xi)).
	$$ 
	It follows by an application of the mean value theorem that \begin{align*}
		|w_{\calA}(\bzer) - w_{\calA}(\bgam)| \le \int_0^1 |1-e(\bgam \cdot \bphi(z(\xi)))| \d \xi \le 2 \pi r M|\bgam|,
	\end{align*}	
	where 
	$$
	M= \max_{1 \le j \le r} \sup_{\xi \in [0,1]} |\vphi_j(\xi)|.
	$$
	Thus we have
	$$
		w_{\calA}(\bgam)\ge 1/2 \quad \text{for} \quad |\bgam| \le \frac{1}{4 M \pi r},
	$$
	and we conclude that 
	$$
		\int_{|\bgam| \le Q}|w_\calA(\bgam)|^{2t_0} \d \bgam \ge (4 M \pi r)^{-r} 2^{-2t_0} \gg 1
	$$
	as desired. 
\end{proof}
We can leverage Lemma~\ref{L4.lower} in order to obtain upper bounds for the quantities $\scrS_{\calA}(Q)$ and $\scrJ_{\calA}(Q)$. 

\begin{lemma}\label{L4.upper}
	Assume that Properties~\eqref{D} and~\eqref{V} are satisfied with $Q_D$, $t_0$ and $\Del$, and suppose that $Z$ is large enough so that 
	\begin{align}\label{4.Y-bd3}
		Q \le \min \left\{Q_D(Z), \left(\frac{A(Z) \Del(Z)}{E(Z)}\right)^{1/(2r+3)}, Z^{1/(2r+3)} \right\}.
	\end{align}
	Then we have $\scrS_\calA(Q) \ll \Del(Z)$ and $\scrJ_\calA(Q) \ll \Del(Z)$. 
\end{lemma}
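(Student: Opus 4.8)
The plan is to exploit the factorisation~\eqref{4.I(Q)} with the exponent $2s$ replaced throughout by $2t_0$: running the computation that produced~\eqref{4.I(Q)} with $s=t_0$ in place of $s$ shows that
\begin{align*}
	\int_{\frN(Q)}|f^*_\calA(\balp)|^{2t_0}\d\balp = \Psi(X)^{2t_0}X^{-K}\,\scrS_\calA(Q)\,\scrJ_\calA(Q),
\end{align*}
so the whole task reduces to bounding the left-hand integral from above and then separating the two factors on the right. The key point is that, in contrast to Proposition~\ref{prop-minor}, no minor-arc information (and in particular no appeal to Property~\eqref{W}) is needed here: one may simply bound the integral over $\frN(Q)$ by the complete integral over $\T^r$ and invoke Property~\eqref{V}.

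Concretely, I would first apply~\eqref{3.approxN} with $s=t_0$ to replace $f^*_\calA$ by $f_\calA$ on $\frN(Q)$ at the cost of an error $\ll A(X)^{2t_0}X^{-K}Q^{2r+3}\bigl(E(X)/A(X)+1/X\bigr)$, and then estimate trivially $\int_{\frN(Q)}|f_\calA(\balp)|^{2t_0}\d\balp \le I_{t_0,\bphi}(X;\calA)\ll A(X)^{2t_0}X^{-K}\Del(X)$ by~\eqref{V}. Specialising $X=Z$ and feeding in the hypothesis~\eqref{4.Y-bd3}, the constraint $Q^{2r+3}\le A(Z)\Del(Z)/E(Z)$ forces the first part of the error to be $\ll A(Z)^{2t_0}Z^{-K}\Del(Z)$, while $Q^{2r+3}\le Z$ together with the fact that $\Del(Z)\gg1$ (which follows on comparing~\eqref{V} with the lower bound~\eqref{1.lower-bd}) disposes of the second; since $\Psi(Z)\sim A(Z)$ by~\eqref{3.A-Psi-error}, one concludes that
\begin{align*}
	\scrS_\calA(Q)\,\scrJ_\calA(Q)\ll\Del(Z).
\end{align*}
Finally, Lemma~\ref{L4.lower} supplies the uniform lower bounds $\scrS_\calA(Q)\gg1$ and $\scrJ_\calA(Q)\gg1$, so dividing through by one factor at a time yields $\scrS_\calA(Q)\ll\Del(Z)$ and $\scrJ_\calA(Q)\ll\Del(Z)$ separately, which is the assertion.

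The argument is essentially bookkeeping, and the only place that demands any care is checking that each of the three terms in the minimum defining the admissible range~\eqref{4.Y-bd3} of $Q$ is exactly what is required to keep the approximation error from~\eqref{3.approxN} below the target size $A(Z)^{2t_0}Z^{-K}\Del(Z)$; this, rather than any genuine analytic obstacle, is what dictates the shape of~\eqref{4.Y-bd3}. The disentangling of $\scrS_\calA(Q)$ from $\scrJ_\calA(Q)$ at the very end is immediate precisely because the lower bounds of Lemma~\ref{L4.lower} have been secured in advance.
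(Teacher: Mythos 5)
Your proposal is correct and follows essentially the same route as the paper: identify $\scrS_\calA(Q)\scrJ_\calA(Q)$ with the major-arc integral of $|f^*_\calA|^{2t_0}$, pass to $f_\calA$ via the approximation error of Lemma~\ref{L3.approx} (equivalently \eqref{3.approxN} with $s=t_0$), extend the integration to all of $\T^r$ and apply \eqref{V}, with \eqref{4.Y-bd3} keeping the error below $\Del(Z)$, and finally separate the two factors using Lemma~\ref{L4.lower}. Your explicit remark that $\Del(Z)\gg 1$ (via \eqref{1.lower-bd}) is a point the paper leaves implicit, but the argument is the same.
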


\begin{proof}
	From~\eqref{4.betgam}, \eqref{4.vtow} and~\eqref{4.scrJ} we see that 
	\begin{align*}
		\scrJ_{\calA}(Q) = A(Z)^{-2t_0} Z^K \int_{\substack{|\bet_j| \le QZ^{-k_j} \\ (1 \le j \le r)}} |v_{\calA}(\bbet; Z)|^{2t_0} \d \bbet.
	\end{align*}
	Combining this with~\eqref{4.scrS} and Lemma~\ref{L3.approx}, we find that 
	\begin{align*}
		\scrJ_{\calA}(Q) \scrS_{\calA}(Q) &= A(Z)^{-2t_0} Z^K \int_{\frM_Z(Q)} |f^*_{\calA}(\balp; Z)|^{2t_0}\ \d \balp\\
		&= 	A(Z)^{-2t_0} Z^K \int_{\frM_Z(Q)} |f_{\calA}(\balp; Z)|^{2t_0} \d \balp + O\left(Q^{2r+3}\left(\frac{E(Z)}{A(Z)} + \frac{1}{Z}\right)\right).
	\end{align*}
	Since the integrand is non-negative, we may extend the range of integration to the full unit torus. Thus, upon applying Property~\eqref{V}, we discern that 
	\begin{align*}
		\scrJ_{\calA}(Q) \scrS_{\calA}(Q) \ll \Del(Z) + Q^{2r+3} \left(\frac{E(Z)}{A(Z)} + \frac{1}{Z}\right), 
	\end{align*}
	and~\eqref{4.Y-bd3} ensures that the first term dominates. The conclusion of the lemma now follows upon combining this last bound with Lemma~\ref{L4.lower}
\end{proof}

We are now prepared to show that both the singular series and the singular integral can be extended to infinity and converge absolutely. 

Recall the definition of $Y$ from (\ref{1.Y-mv}). For any $Q > 0$ take $Z_Q$ so that $Q = Y(Z_Q)$. This is always possible since $Y(Z)$ is a function that increases to infinity, and which we can assume to be continuous with $Y(0)=0$. Take then $\sig>\sig_0$ where $\sig_0$ is as in~\eqref{1.sig}. With this choice, Lemmata~\ref{L4.w-bd} and~\ref{L4.upper} are applicable and yield 
\begin{align}\label{4.J-dyad}
	\frJ(2Q) - \frJ(Q)& \ll \sup_{|\bgam| \ge Q} |w_\calA(\bgam)|^{\sig} \scrJ(2Q) \nonumber \\
	&\ll Q^{-\sig \vrho}L(Z_Q)^\sig \Del(Z_Q).
\end{align}
Suppose that $\sig = \sig_0+\sig_1$ for some $\sig_1>0$. Recall also that the hypotheses of Theorem~\ref{T:VMVT} stipulate that  $L(Z) \ll Y(Z)^{\vrho-\del}$ for some $\del>0$. Thus, upon applying the definition~\eqref{1.sig}, we can rewrite the expression on the right-hand side of~\eqref{4.J-dyad} as
\begin{align*}
	Q^{-\sig \vrho}L(Z_Q)^\sig \Del(Z_Q)  \ll (Q^{-\vrho}L(Z_Q))^{\sig_1-\eps} \ll Q^{-\del_1}
\end{align*}
for some $\del_1>0$. 
Thus, we discern that 
\begin{align}\label{4.J-conv}
	\frJ_{\calA} - \frJ_{\calA}(Q) = \sum_{j=1}^\infty \left( \frJ_{\calA}(2^jQ) - \frJ_{\calA}(2^{j-1}Q) \right) \ll \sum_{j=0}^\infty(2^jQ)^{-\del_1} \ll Q^{-\del_1}.
\end{align}

In a similar manner, we see from Lemmata~\ref{L4.upper} and~\ref{L4.q-bd} that 
\begin{align}\label{4.S-dyad}
	\frS_{\calA}(2Q)-\frS_{\calA}(Q) &\ll \sup_{q \ge Q} \sup_{\substack{\bb \mmod q \\ (q, \bb)=1}} \left|S_\calA(q,\bb)\right|^{\sig} \scrS_{\calA}(2Q) \nonumber\\
	&\ll Q^{-\sig\vrho} L(Z_Q)^\sig \Del(Z_Q) \ll Q^{-\del_1},
\end{align}
and consequently 
\begin{align}\label{4.S-conv}
	\frS_{\calA} - \frS_{\calA}(Q) \ll Q^{-\del_1}.
\end{align}

We summarise our results up to this point.
\begin{proposition}\label{prop-asymp}
	Suppose that Properties~\eqref{D}, \eqref{V} and~\eqref{W} are satisfied with parameters $Q_W$, $Q_D$, $\vrho$, $t_0$, $\Del$ and $L$, and define $Y$ as in~\eqref{1.Y-mv}. Under the assumptions of Theorem~\ref{T:VMVT} there exists a number $Q$ satisfying $\Del^{1/(\sig_0  \vrho)}L^{1/\vrho} \llcurly Q \llcurly Y$ for which	
	\begin{align*}
		I_{s, \bphi}(X; \calA) = A(X)^{2s}X^{-K} \frS_{\calA}\frJ_{\calA} + o(A(X)^{2s}X^{-K}). 
	\end{align*}
\end{proposition}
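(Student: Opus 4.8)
The plan is to combine the major-arc decomposition already established in Section~\ref{S3} with the convergence estimates for the singular series and singular integral proved immediately before the statement. First I would invoke Proposition~\ref{prop-minor}: under the hypotheses of Theorem~\ref{T:VMVT} there exists a parameter $Q$ with $\Del^{1/(\sig_0\vrho)}L^{1/\vrho} \llcurly Q \llcurly Y$ for which
\begin{align*}
	I_{s, \bphi}(X; \calA) = I_{\calA}(Q) + o(A(X)^{2s}X^{-K}),
\end{align*}
recalling that $I_{\calA}(Q) = I^*_{\calA}(\frN(Q))$. Next I would substitute the identity~\eqref{4.I(Q)}, which expresses $I_{\calA}(Q)$ as $A(X)^{2s}X^{-K}\frS_{\calA}(Q)\frJ_{\calA}(Q)(1+o(1))$ via the rescaling computation~\eqref{4.rescaling}--\eqref{4.vtow}. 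So the task reduces to replacing the truncated quantities $\frS_{\calA}(Q)$ and $\frJ_{\calA}(Q)$ by their limiting values $\frS_{\calA}$ and $\frJ_{\calA}$.

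For that replacement I would use~\eqref{4.S-conv} and~\eqref{4.J-conv}, which give $\frS_{\calA} - \frS_{\calA}(Q) \ll Q^{-\del_1}$ and $\frJ_{\calA} - \frJ_{\calA}(Q) \ll Q^{-\del_1}$ for some $\del_1 > 0$; in particular both limits exist and are finite. Since $Q \to \infty$ as $X \to \infty$ (because $\Del^{1/(\sig_0\vrho)}L^{1/\vrho} \llcurly Q$ forces $Q$ to be unbounded, the left endpoint itself tending to infinity once $\sig_0 > 0$, or else $\Del$ bounded in which case any slowly growing $Q$ works), these error terms are $o(1)$. I would also need the crude bounds $\frS_{\calA}(Q) \ll 1$ and $\frJ_{\calA}(Q) \ll 1$ — these follow from the displayed convergence estimates together with, say, Lemma~\ref{L4.upper} applied with $t_0$ replaced by $s$ (or simply from $\frS_{\calA}(Q) \le \frS_{\calA} < \infty$ and likewise for $\frJ$, since the summands and integrands are non-negative). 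Then
\begin{align*}
	\frS_{\calA}(Q)\frJ_{\calA}(Q) = \frS_{\calA}\frJ_{\calA} + O\!\left(Q^{-\del_1}\right) = \frS_{\calA}\frJ_{\calA} + o(1),
\end{align*}
and multiplying through by $A(X)^{2s}X^{-K}$ and folding the $o(1)$ factor from~\eqref{4.I(Q)} into the error gives the claimed asymptotic formula with $\calC_{\calA} = \frS_{\calA}\frJ_{\calA}$, which is non-negative as a product of limits of non-negative quantities (and positive interpretations are deferred to the later discussion under Hypothesis~\eqref{C}).

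The only genuinely delicate point is bookkeeping the range of validity: the convergence estimates~\eqref{4.J-conv} and~\eqref{4.S-conv} were derived by choosing $Z_Q$ with $Q = Y(Z_Q)$ and invoking Lemmata~\ref{L4.w-bd}, \ref{L4.q-bd} and~\ref{L4.upper}, whose hypotheses~\eqref{4.Y-bd1}, \eqref{4.q-bd}, \eqref{4.Y-bd3} must be checked to be implied by $|\bgam| \le Q = Y(Z_Q)$ (respectively $q \le Q$); this amounts to comparing the exponent $1/(2r+3)$ in the definition~\eqref{1.Y-mv} of $Y$ against the exponents $1/(1+\vrho)$ and $1/(2r+3)$ appearing in those lemmata, using $\vrho \le$ (something) so that $Y$ is the binding constraint. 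Since all of this is already carried out in the lead-up to~\eqref{4.J-conv} and~\eqref{4.S-conv}, the proof of the proposition is essentially a matter of assembling Proposition~\ref{prop-minor}, equation~\eqref{4.I(Q)}, and the two convergence bounds, and I expect it to be short.
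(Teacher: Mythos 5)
Your proposal is correct and follows essentially the same route as the paper: the proof there consists precisely of invoking Proposition~\ref{prop-minor}, substituting the factorisation~\eqref{4.I(Q)}, and replacing the truncated singular series and integral via~\eqref{4.S-conv} and~\eqref{4.J-conv}. The extra bookkeeping you supply (boundedness of the truncated quantities and the fact that $Q$ may be taken to tend to infinity since $Y(X)\to\infty$) is consistent with, and implicit in, the paper's one-line argument.
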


\begin{proof}
	This is immediate from Proposition~\ref{prop-minor} upon combining \eqref{4.I(Q)} with \eqref{4.S-conv} and \eqref{4.J-conv}.
\end{proof}

\section{Interpretation of the major arcs}\label{S5}
The next step is to interpret the singular series $\frS_{\calA}$ and singular integral $\frJ_{\calA}$. Here, the singular integral carries the weights $(a_n)$, making its treatment more challenging, while the analysis of the singular series is mostly standard (see in particular \cite[Chapter~2.6]{V:HL} or \cite[Chapter~5]{dav}). However, we choose to give details regardless even in this setting, partly in the interest of a self-contained presentation, but also to address in the appropriate detail the deviations from the usual arguments caused by our setting over thin sets.  

We begin with the singular series. Throughout, we will now assume that Condition~\eqref{C} is satisfied for the set $\calA$. The first main result here states that $\frS_{\calA}$ factors into a product of local factors. For any natural number $q$ write
$$
	B_{\calA}(q) = \sum_{\substack{\bb=1 \\ (\bb,q)=1}}^q |S_{\calA}(q, \bb)|^{2s}.
$$
The following lemma is essentially standard (see e.g. \cite[Lemma~2.10]{V:HL}), but due to our more general setting we include a proof for the sake of completeness.
\begin{lemma}
	Assume Property \eqref{C}. Then the function $B_{\calA}(q)$ is multiplicative. 
\end{lemma}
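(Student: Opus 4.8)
The plan is to show that $B_{\calA}(qq') = B_{\calA}(q)B_{\calA}(q')$ whenever $(q,q')=1$ by exploiting the Chinese Remainder Theorem at the level of both the residue classes $\ell$ inside $S_{\calA}$ and the numerator vectors $\bb$. First I would recall the definition
$$
	S_{\calA}(q, \bb) = \sum_{\ell \mmod q} \kap(q,\ell) e\!\left( \frac{\bb \cdot \bphi(\ell)}{q}\right),
$$
and write $q'' = qq'$. By CRT, every residue $\ell \mmod{q''}$ corresponds uniquely to a pair $(\ell_1, \ell_2)$ with $\ell \equiv \ell_1 \mmod q$ and $\ell \equiv \ell_2 \mmod{q'}$; explicitly one may take $\ell = q'\bar{q'}\ell_1 + q\bar q \ell_2$ where $q'\bar{q'} \equiv 1 \mmod q$ and $q\bar q \equiv 1 \mmod{q'}$. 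The key input is Hypothesis~\eqref{C}, which in the form $\kap(qq', qb'+q'b) = \kap(q,q'b)\kap(q',qb')$ tells us precisely that $\kap(q'', \ell)$ factors as $\kap(q,\cdot)\kap(q',\cdot)$ along this CRT decomposition. Since the $\vphi_j$ are homogeneous of degree $k_j$ and we are working with additive characters, the exponential also splits: writing $\bb/q'' = \bb_1/q + \bb_2/q' \pmod 1$ for the corresponding decomposition of the numerator, one gets $e(\bb\cdot\bphi(\ell)/q'') = e(\bb_1 \cdot \bphi(\ell_1)/q)\, e(\bb_2 \cdot \bphi(\ell_2)/q')$ after using $\vphi_j(\ell) \equiv \vphi_j(q'\bar{q'}\ell_1) \equiv (q'\bar{q'})^{k_j}\vphi_j(\ell_1) \pmod q$ and absorbing the unit factors into a relabelling of $\bb_1$ (and symmetrically mod $q'$). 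Hence $S_{\calA}(q'', \bb) = S_{\calA}(q, \bb_1) S_{\calA}(q', \bb_2)$.

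Next I would address the coprimality condition in the outer sum defining $B_{\calA}$. The map $\bb \mapsto (\bb_1, \bb_2)$ just described is a bijection from $\{\bb \mmod{q''}\}$ to $\{\bb_1 \mmod q\} \times \{\bb_2 \mmod{q'}\}$, and under it $(\bb, q'') = 1$ holds if and only if $(\bb_1, q) = 1$ and $(\bb_2, q') = 1$ — this is just the vector analogue of the standard CRT statement, using $(q,q')=1$. Therefore
$$
	B_{\calA}(q'') = \sum_{\substack{\bb \mmod{q''} \\ (\bb, q'')=1}} |S_{\calA}(q'', \bb)|^{2s}
	= \sum_{\substack{\bb_1 \mmod q \\ (\bb_1, q)=1}} \sum_{\substack{\bb_2 \mmod{q'} \\ (\bb_2, q')=1}} |S_{\calA}(q, \bb_1)|^{2s} |S_{\calA}(q', \bb_2)|^{2s} = B_{\calA}(q) B_{\calA}(q'),
$$
as required.

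I expect the main obstacle to be the careful bookkeeping of the unit factors $\bar{q'}$ and $\bar q$ that appear when one applies CRT to the residue $\ell$: raising to the $k_j$-th power produces factors $(q'\bar{q'})^{k_j}$ and $(q\bar q)^{k_j}$ multiplying each $b_j$, and one must check that the resulting reindexing of the numerator vectors $\bb_1$, $\bb_2$ is still a bijection on the coprime residues (it is, since $q'\bar{q'}$ is a unit mod $q$). One also has to be slightly careful that Hypothesis~\eqref{C} is being invoked with exactly the right identification of arguments — matching $qb' + q'b$ to the CRT lift of the pair $(\ell_1, \ell_2)$ — and that the homogeneity of the $\vphi_j$ (with no constant term, which we may assume) is what makes the exponential genuinely multiplicative rather than only approximately so. None of this is deep, but it is the part where an error would most naturally creep in, so I would write it out explicitly for one index $j$ and then note that the general case is identical termwise.
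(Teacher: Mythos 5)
Your proposal is correct and follows essentially the same route as the paper: a CRT decomposition of both the residue variable and the numerator vector, Hypothesis~\eqref{C} to factor $\kap(qq',\cdot)$, and the homogeneity (no constant term) of the $\vphi_j$ to split the additive character, giving $S_{\calA}(qq';q\bb'+q'\bb)=S_{\calA}(q;\bb)S_{\calA}(q';\bb')$ and hence the multiplicativity of $B_{\calA}$. The only difference is cosmetic bookkeeping: you carry explicit inverses $\bar q,\bar{q'}$ (and a harmless unit relabelling of the numerators), whereas the paper parametrises residues as $q'x+qy$ and performs a final change of variables.
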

\begin{proof}
	The statement follows if we can show that for any coprime pair $(q,q')$ and any $\bb, \bb'$ we have 
	\begin{align}\label{5.S-mult}
		S_{\calA}(q; \bb) S_{\calA}(q';\bb') = S_{\calA}(qq';q\bb'+q'\bb).
	\end{align}
	
	The right-hand side of~\eqref{5.S-mult} can be written as
	\begin{align*}
		S_{\calA}(qq'; q\bb' + q'\bb) &= \sum_{q'x+qy \mmod {qq'}} \kap(qq',q'x+qy) e \left(\frac{\sum_{j=1}^r (qb'_j + q'b_j)\vphi_j(qy+q'x) }{qq'}\right).
	\end{align*}
	Using the polynomial structure of the the functions $\vphi_j$, we note next that \begin{align*}
		\vphi_j(qy+q'x) \equiv \vphi_j(qy) + \vphi_j(q'x) \pmod{qq'},
	\end{align*}
	since any cross term will contain a factor $qq'$, and by assumption the $\vphi_j$ have no constant term. Using this identity together with our above considerations, we find that 
	\begin{align*}
		&\sum_{q'x+qy \mmod {qq'}} \kap(qq',q'x+qy) e \left(\frac{\sum_{j=1}^r (qb'_j + q'b_j)\vphi_j(qy+q'x) }{qq'}\right) \\
		&= \sum_{q'x+qy \mmod {qq'}} \kap(qq',q'x+qy) e \left(\frac{\sum_{j=1}^r (qb'_j + q'b_j)(\vphi_j(qy) +\vphi_j(q'x)) }{qq'}\right)\\
		&=\sum_{x \mmod q} \kap(q,q'x) e \left(\frac{\sum_{j=1}^r b_j\vphi_j(q'x)}{q}\right) \sum_{y \mmod q} \kap(q',qy) e \left(\frac{\sum_{j=1}^r b'_j\vphi_j(qy) }{q'}\right),
	\end{align*}
	where we used \eqref{C} in the last step. After a final change of variables, we see that this last product is equal to the expression on the left-hand side of~\eqref{5.S-mult}. 
\end{proof}

Thus, upon formally setting 
$$
	\chi_p(\calA) = \sum_{h =0}^{\infty} B_{\calA}(p^h)
$$
we can write the singular series as the formal Euler product
$$
	\frS_{\calA} = \prod_p \chi_p(\calA).
$$
Moreover, since we have already established the absolute convergence of $\frS_{\calA}$, it follows that the Euler product has the same property. 

It remains to interpret the factors $\chi_p(\calA)$. To this end, define 
\begin{align*}
	\Gam_{\calA}(q) = q^{-r}\sum_{\bb \mmod q}\left|S_{\calA}(q; \bb)\right|^{2s},
\end{align*}
so that $\Gam_{\calA}(q)$ counts the solutions $\bx, \by \in (\Z/q\Z)^s$ satisfying 
$$
	\vphi_j(x_1)+ \ldots + \vphi_j(x_s) \equiv \vphi_j(y_1)+\ldots + \vphi(y_s) \pmod q \qquad (1 \le j \le k),
$$
but where each solution $(\bx,\by)$ is counted with weight $\prod_{i=1}^s \kap(q,x_i)\kap(q,y_i)$. In this notation we have the following.
\begin{lemma}\label{L5.Gam}
	The factor $\chi_p(\calA)$ is given by 
	\begin{align*}
		\chi_p(\calA) = \lim_{h\to \infty}p^{rh}\Gam_{\calA}(p^h).
	\end{align*}
\end{lemma}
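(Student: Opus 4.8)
The plan is to establish the exact identity $p^{rh}\Gam_\calA(p^h)=\sum_{j=0}^h B_\calA(p^j)$ for every $h\ge0$, and then let $h\to\infty$. The crucial preliminary is a consistency relation for the complete sums along a fixed prime: for $0\le j\le h$ and every $\bc\in(\Z/p^j\Z)^r$ one has $S_\calA(p^h;p^{h-j}\bc)=S_\calA(p^j;\bc)$. To prove this I would start from the definition, writing $S_\calA(p^h;p^{h-j}\bc)=\sum_{\ell\mmod{p^h}}\kap(p^h,\ell)\,e\bigl((\bc\cdot\bphi(\ell))/p^j\bigr)$; since each $\vphi_i$ is a monomial with no constant term, this exponential depends only on $\ell$ modulo $p^j$, so one may sort the residues $\ell\mmod{p^h}$ into classes modulo $p^j$ and use the additivity relation~\eqref{2:kappa-add} to collapse each inner sum to $\kap(p^j,\ell')$, recovering $S_\calA(p^j;\bc)$. (Notably, Condition~\eqref{C} plays no role here.)

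With this in hand, I would stratify the residues in $p^{rh}\Gam_\calA(p^h)=\sum_{\bb\mmod{p^h}}|S_\calA(p^h;\bb)|^{2s}$ according to the exact power $p^{h-j}=(\bb,p^h)$, for $0\le j\le h$. The stratum $j=0$ contains only $\bb=\bzer$ and contributes $|S_\calA(1;\bzer)|^{2s}=1=B_\calA(1)$. For $1\le j\le h$, the relevant $\bb$ are exactly the vectors $p^{h-j}\bc$ with $\bc$ running over residues modulo $p^j$ satisfying $(\bc,p)=1$, distinct such $\bc$ producing distinct $\bb\mmod{p^h}$; by the consistency relation the corresponding summand equals $|S_\calA(p^j;\bc)|^{2s}$, and since $p^j$ is a prime power the constraint $(\bc,p)=1$ coincides with $(\bc,p^j)=1$, so this stratum sums to $B_\calA(p^j)$. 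Collecting the strata yields the claimed identity.

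Finally, since every $B_\calA(p^j)\ge0$ the partial sums $\sum_{j=0}^h B_\calA(p^j)$ are nondecreasing, and they converge to $\chi_p(\calA)=\sum_{j\ge0}B_\calA(p^j)$ by the absolute convergence of the singular series $\frS_\calA$ established in Section~\ref{S4}; hence $\lim_{h\to\infty}p^{rh}\Gam_\calA(p^h)=\chi_p(\calA)$, which is the assertion of the lemma. I do not anticipate any genuine analytic difficulty: the only steps demanding care are the combinatorial bookkeeping of the stratification and the verification of the consistency relation $S_\calA(p^h;p^{h-j}\bc)=S_\calA(p^j;\bc)$, both of which rest entirely on the additivity~\eqref{2:kappa-add} of the densities $\kap(q,\cdot)$ and the monomial structure of $\bphi$.
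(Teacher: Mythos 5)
Your proposal is correct and follows essentially the same route as the paper: the paper likewise stratifies $\Gam_{\calA}(p^h)$ by extracting common powers of $p$ from $\bb$, proves the same consistency relation $S_{\calA}(p^h;p^{h-j}\bc)=S_{\calA}(p^j;\bc)$ by sorting residues modulo $p^j$ and invoking the additivity~\eqref{2:kappa-add}, and then sums the strata to get $\sum_{j\le h}B_{\calA}(p^j)$ before letting $h\to\infty$. Your observation that Condition~\eqref{C} is not needed here is also consistent with the paper, which uses~\eqref{C} only for the multiplicativity of $B_{\calA}(q)$ across coprime moduli.
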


\begin{proof}	
	By taking out common divisors, we have 
	\begin{align*}
		\Gam_{\calA}(p^h) = p^{-rh} \sum_{j=0}^h \sum_{\substack{\bc = 1 \\ (p, \bc)=1}}^{p^{j}} |S_{\calA}(p^h; p^{h-j}\bc)|^{2s}.
	\end{align*} 
	The inner exponential sum can be simplified by observing that
	\begin{align*}
		S_{\calA}(p^h; p^{h-j}\bc) &= \sum_{x \in \Z/p^h\Z} \kap(p^h,x)e \left(\frac{c_1 \vphi_1(x) + \cdots + c_r \vphi_r(x)}{p^j}\right) \\
		&= \sum_{x \mmod{p^j}} e \left(\frac{c_1 \vphi_1(x) + \cdots + c_r \vphi_r(x)}{p^j}\right) \sum_{\substack{y \mmod{p^h} \\ y \equiv x \mmod {p^j}}}\kap(p^h,y)\\
		& = S_{\calA}(p^j; \bc),
	\end{align*}
	where in the last step we used \eqref{2:kappa-add}. 
	Consequently, we obtain
	\begin{align*}
		\chi_p(\calA) &= \lim_{h\to \infty} \sum_{j =0}^h \sum_{\substack{\bc = 1 \\ (p, \bc)=1}}^{p^j}|S_{\calA}(p^j; \bc)|^{2s}= \lim_{h\to \infty}  p^{rh} \Gam_{\calA}(p^h)
	\end{align*}
	as desired.
\end{proof}

To conclude the discussion of the singular series, we recall that $\Gam_{\calA}(q)$ describes the weighted count of solutions to the system of $r$ equations~\eqref{1.mv-eq}, viewed as congruences modulo $q$. Since the weights $\kap(q,b)$ are normalised so that $\sum_{b \mmod q}\kap(q,b)=1$, but the number of choices is constrained by $r$ equations, the anticipated value of $\Gam_{\calA}(q)$ is $q^{-r}$. Thus, the quantity $q^r\Gam_{\calA}(q)$ describes the weighted density of solutions to~\eqref{1.mv-eq} in $\Z/q\Z$, and upon restricting to prime powers and taking the limit $h \to \infty$ we conclude that $\chi_p(\calA)$ describes the weighted density of solutions of~\eqref{1.mv-eq} over the embedding of $\calA$ into the field of $p$-adic numbers $\Q_p$. \bigskip

Having conducted the analysis of the singular series to a satisfactory endpoint, we next direct our attention to the singular integral. 
Recall that 
$$
	\frJ_{\calA} = \int_{\R^r} |w_{\calA}(\bgam)|^{2s} \d \bgam,
$$
where $w_{\calA}(\bgam)$ is given by~\eqref{4.w-def}. Define the $\calA$-normalised measure of level $X$ on $\R_+$ via 
$$
	\d\sig_{\calA}(z) = \frac{X}{\Psi(X)} \psi(Xz)\d z.
$$ 

For every $X$, we have $\sig_{\calA}([0,1]) = 1$, so this is a probability measure on $[0,1]$ that preserves the structure of the set $\calA(X)$. 
Note that since the density distribution of $\calA(X)$ is not homogeneous and does depend on the cutoff point $X$, the scale $X$ cannot be entirely dispensed with. For easier notation, we will also abbreviate 
$$
	\prod_{j=1}^s \d \sig_{\calA}(\xi_i) = \d \sig_{\calA}(\bxi).
$$ 

By a change of variables, we can then write $w_{\calA}(\bgam)$ in the shape
$$
	w_{\calA}(\bgam) = \int_0^1 e(\bgam \cdot \bphi(z)) \d \sig_{\calA}(z). 
$$

Next, we describe a heuristic argument regarding the interpretation of $\frJ_{\calA}$. Define 
\begin{align}\label{5.V_A}
	V_{\calA}(\bh) 
	&=
	\int_{[0,1]^{2s}} \prod_{j=1}^r \mathbbm{1}_{[ \vphi_j(\xi_1) + \ldots + \vphi_j(\xi_s) - \vphi_j(\zeta_1) - \ldots - \vphi_j(\zet_s) = h_j] } \d \sig_{\calA}(\bxi)\d \sig_{\calA}(\bzet)\nonumber\\
	&= \vol_{\calA}\left\{\bxi, \bzet \in [0,1]^s: \sum_{i=1}^s(\bphi(\xi_i) - \bphi(\zeta_i) )= \bh \right\},
\end{align}
where the notation $\vol_{\calA}$ indicates that the volume is to be interpreted in terms of the measure $\sig_{\calA}$. With this notation, we see that $V_{\calA}(\bzer)$ describes the $\sig_{\calA}$-weighted volume of the set of $(\bxi, \bzet) \in [0,1]^s$ satisfying~\eqref{1.mv-eq}. We can then compute the inverse Fourier transform 
\begin{align*}
	\calF^{-1} V_{\calA} (\bgam) &= \int_{\R^r} V_{\calA}(\bh) e(\bh \cdot \bgam) \d \bh \\
	& =\int_{[0,1]^{2s}} e(\bgam \cdot (\bphi(\xi_1) + \ldots + \bphi(\xi_s) - \bphi(\zeta_1) - \ldots - \bphi(\zeta_s) ))  \d \sig_{\calA}(\bxi)\d \sig_{\calA}(\bzet)\\
	&= |w_\calA(\bgam)|^{2s},
\end{align*}
so that upon taking the forward Fourier transform and formally evaluating at $\bzer$ we retrieve the identity  
\begin{align*}
	V_{\calA}(\bzer) = \int_{\R^r} |w_{\calA}(\bgam)|^{2s} \d \bgam = \frJ_{\calA}.
\end{align*}
In other words, whenever these considerations are valid, the singular integral $\frJ_{\calA}$ has an interpretation in terms of the $\sig_{\calA}$-weighted volume of the solutions of~\eqref{1.mv-eq} inside $[0,1]^{2s}$. 

    Unfortunately, the fact that Fourier transforms are well-defined only in an almost-everywhere sense means that the reasoning described here needs to be taken with a grain of salt unless we can show that the Fourier transform converges pointwise at zero. One sufficient criterion for the pointwise convergence of Fourier series is that the function in question be differentiable. Thus, the argument sketched above is valid provided that the collection of polynomials $\bphi$ is such that the variety defined by~\eqref{1.mv-eq} is smooth inside the cube $(0,1)^{2s}$. 

In order to avoid convergence issues, we follow a more robust approach due to Schmidt \cite[Section 11]{schmidt82}.
When $T \ge 1$, define
\begin{align*}
	h_T(y) = \begin{cases} T(1-T|y|), & \text{when } |y| \le T^{-1},\\ 0, & \text{otherwise}, \end{cases}
\end{align*}
and recall that
\begin{align*}
	h_T(y) = \int_{-\infty}^{\infty}  e(\bet y)  \left( \frac{\sin(\pi \bet/T)}{\pi \bet/T}\right)^2 \d \bet,
\end{align*}
where the integral converges absolutely. Define the kernel
\begin{align*}
	K_T(\bgam) = \prod_{j=1}^{r} \left( \frac{\sin(\pi \gam_j/T)}{\pi \gam_j /T}\right)^2
\end{align*}
and set 
\begin{align*}
	W_T = \int_{\R^r} | w_{\calA}(\bgam)|^{2s} K_T(\bgam) \d \bgam,
\end{align*}
so that 
\begin{align}\label{5.W_T}
	W_T = \int_{[0,1]^s}  \prod_{j=1}^r h_T( \vphi_j(\xi_1) + \ldots + \vphi_j(\xi_s) - \vphi_j(\zet_1) - \ldots - \vphi_j(\zet_s)) \d \sig_{\calA}(\bxi) \d \sig_{\calA}(\bzet).
\end{align}
Our first step is to show that $W_T$ is an approximation to $\frJ_{\calA}$. In order to bound the difference 
\begin{align}\label{5.Schmidt-diff}
	W_T - \frJ_{\calA} = \int_{\R^r} |w_{\calA}(\bgam)|^{2s}(K_T(\bgam) - 1) \d \bgam,
\end{align}
it is convenient to consider two domains separately. Write $U_1=[-\sqrt T, \sqrt T]^r$, and set 
$U_2 = \R^r \setminus U_1$. The power series expansion of $K_T$ gives
\begin{align*}
	0\le 1-K_T(\bgam) \ll \min \left\{ 1,  \sum_{j=1}^{r} (|\gam_j|/T)^2\right\},
\end{align*}
and we discern that the domain  $U_1$ contributes at most
\begin{align*}
	\left(\sup_{\bgam \in U_1}|1 - K_T(\bgam)|\right)\int_{\R^r} |w_{\calA}(\bgam) |^{2s} \d \bgam \ll \sup_{\bgam \in U_1}|1 - K_T(\bgam)| \frJ_{\calA} \ll T^{-1},
\end{align*}
where we used our result from the previous section that $\frJ_{\calA} \ll 1$. 

Meanwhile, the contribution from $U_2$ is bounded above by
\begin{align*}
	\sum_{i=1}^{\infty} |\frJ_{\calA}(2^i \sqrt T) - \frJ_{\calA}(2^{i-1} \sqrt T)| \ll  \sum_{i=1}^{\infty}(2^i \sqrt T)^{-\del} \ll T^{-\del/2},
\end{align*}
for some positive number $\del$  with $\del<1$, where again we took advantage of 
our earlier findings. Thus we infer from~\eqref{5.Schmidt-diff} that
\begin{equation}\label{5.Schmidt1}
	|W_T - \frJ_{\calA}| \ll T^{-\del/2}
\end{equation}
for all $T \ge 1$, and hence $W_T$ does indeed converge to $\frJ_{\calA}$, as 
claimed.

We now turn to the second ingredient in Schmidt's approach, a geometric argument to show that $W_T \gg 1$ uniformly in $T$. Here, we need to refine his approach of \cite[Lemma~2]{schmidt82} to better accommodate our $\calA$-weighted measure. 

In order to correctly interpret the results, recall that the $\calA$-weighted measure is not usually equidistributed, and that its distribution depends in particular on the scale $X$. Thus, while the measure is normalised with respect to the unit cube, the volume of a subset of the unit cube will depend on how the subset correlates with the mass distribution of the measure. Since the latter depends on $X$, the implicit constant in the estimate $W_T \gg 1$ will also depend on $X$, but it will, crucially, be uniform in $T$. In order to illustrate this phenomenon, we therefore give the following more quantitative version of \cite[Lemma~2]{schmidt82}.
	
	\begin{lemma}\label{L5.Schmidt2}
		Suppose that the function $\psi$ has at most finitely many discontinuities on the interval $[0,X]$. Then we have $W_T \gg V_{\calA}(\bzer)$ uniformly for all sufficiently large $T$, where $V_\calA(\bzer)$ is as defined in~\eqref{5.V_A}.
	\end{lemma}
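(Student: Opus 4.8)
The plan is to adapt Schmidt's geometric argument from \cite[Lemma~2]{schmidt82} to the $\calA$-weighted setting, taking care to track the dependence on the measure $\sig_\calA$. Recall from \eqref{5.W_T} that
$$
	W_T = \int_{[0,1]^{2s}}  \prod_{j=1}^r h_T\bigl( \vphi_j(\xi_1) + \ldots + \vphi_j(\xi_s) - \vphi_j(\zet_1) - \ldots - \vphi_j(\zet_s)\bigr) \d \sig_{\calA}(\bxi) \d \sig_{\calA}(\bzet),
$$
and since each $h_T \ge 0$, the integrand is non-negative; thus it suffices to find a subregion of $[0,1]^{2s}$ of positive $\sig_\calA$-measure on which the product of the $h_T$ is bounded below by a positive constant (uniformly in $T$). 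First I would fix a point $(\bxi^0, \bzet^0)$ lying on the variety defined by \eqref{1.mv-eq}, i.e.\ with $\sum_i \vphi_j(\xi^0_i) = \sum_i \vphi_j(\zet^0_i)$ for all $j$; the diagonal $\bxi^0 = \bzet^0$ always provides such points. The idea is then to exploit the freedom in $2s$ variables against only $r$ constraints: after fixing $\xi_{r+1}, \ldots, \xi_s$ and $\zet_1, \ldots, \zet_s$ suitably, the map $(\xi_1, \ldots, \xi_r) \mapsto (\vphi_1(\xi_1)+\ldots, \ldots, \vphi_r(\xi_1)+\ldots)$ should be a local diffeomorphism near a generic point, since the Jacobian is a Vandermonde-type determinant $\prod_{i} \vphi_j'(\xi_i)$-weighted expression that is non-vanishing away from a measure-zero set.

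The key steps, in order, are as follows. (i) Show that for some $\eta > 0$ there is a box $B \subseteq [0,1]^{2s}$ of sidelength $\gg 1$ and a smooth submersion structure such that the fibre over $\bh = \bzer$ of the constraint map, together with a neighbourhood, contributes positively; more precisely, choose coordinates so that $r$ of the $\xi$-variables act as "pivot" variables whose image under the constraint map fills a ball of radius $\gg 1$ around $\bzer$. (ii) On such a box, when $\abs{\bh} = \abs{\sum_i \vphi_j(\xi_i) - \sum_i \vphi_j(\zet_i)} \le T^{-1}$ in each coordinate, we have $\prod_j h_T(h_j) \ge \prod_j (T \cdot \tfrac12) = (T/2)^r$ once $\abs{h_j} \le \tfrac{1}{2T}$. (iii) Estimate the $\sig_\calA$-measure of the set of $(\bxi,\bzet) \in B$ with $\abs{h_j} \le \tfrac1{2T}$ for all $j$: by the co-area formula (valid since $\psi$ has only finitely many discontinuities, so $\sig_\calA$ is absolutely continuous with a bounded-variation density off a finite set), this measure is $\asymp T^{-r} \cdot (\text{density of the fibre})$, and the product $(T/2)^r \cdot T^{-r} \asymp 1$. (iv) Identify the resulting constant with (a positive multiple of) $V_\calA(\bzer)$ as in \eqref{5.V_A}, by noting that the fibre density appearing in step (iii) is exactly the $\sig_\calA$-weighted volume of the solution set, up to the constant factors coming from the Jacobian, which are bounded above and below on the chosen box.

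The main obstacle will be step (iii): controlling the $\sig_\calA$-measure of the thin neighbourhood $\{\abs{h_j} \le \tfrac{1}{2T}\}$ of the variety, uniformly in $T$, in a setting where the measure $\sig_\calA$ may be highly non-homogeneous and only piecewise absolutely continuous. The hypothesis that $\psi$ has finitely many discontinuities on $[0,X]$ is exactly what is needed here: it lets us partition $[0,1]$ into finitely many intervals on which $\psi$ is continuous, hence on each product of such intervals $\sig_\calA$ has a density bounded above, and we may choose the pivot box $B$ to avoid the (measure-zero) bad set where either the Jacobian of the constraint map degenerates or $\psi$ jumps. On that box the co-area estimate goes through with constants depending on $X$ (through $\sup \psi$ and $\inf \psi$ on the relevant subintervals) but not on $T$, which is precisely the uniformity claimed. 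The constants linking the final lower bound to $V_\calA(\bzer)$ rather than merely to a positive number come from checking that the same box $B$ and the same Jacobian bounds govern the integral defining $V_\calA(\bzer)$ in \eqref{5.V_A}, so that both quantities are comparable to the same fibre-density integral.
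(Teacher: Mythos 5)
Your overall route is the same as the paper's: both are adaptations of Schmidt's tube argument, lower-bounding $W_T$ by restricting to a neighbourhood of width $\asymp T^{-1}$ in the $r$ constraint directions around the variety $\sum_i(\bphi(\xi_i)-\bphi(\zeta_i))=\bzer$, where each factor $h_T \gg T$, and then showing the $\sig_{\calA}$-measure of that tube is $\gg T^{-r}$ times a weighted surface measure. The implicit-function/pivot-variable structure you describe is essentially the graph parametrisation $(\bzet, f(\bzet))$ the paper uses.

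There are, however, two genuine gaps. First, in step (iii) the lower bound for the tube measure requires the density to be bounded \emph{below} on the region you work in, not merely above: if $\psi_1$ is tiny on your box, the tube measure can be much smaller than $T^{-r}$ times the unweighted fibre volume. The hypothesis of finitely many discontinuities only gives piecewise continuity; the paper deals with this by restricting to submanifolds on which $\psi_1 \ge \tau$ and exploiting (Lipschitz) continuity so that the density cannot drop below $\tau/2$ inside the thin tube, which is what makes the estimate $\vol_{\calA}(\scrU_i(\del)) \gg_\tau \del^r \vol_{\calA}(\scrO_i)$ legitimate. Second, and more seriously, your step (iv) claims that the fibre density over a \emph{single} box $B$ around one (diagonal) point is comparable to $V_{\calA}(\bzer)$. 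That is not true in general: $V_{\calA}(\bzer)$ is the $\sig_{\calA}$-weighted $(2s-r)$-dimensional volume of the whole variety inside $[0,1]^{2s}$, and one chart can capture an arbitrarily small fraction of it (note also that $V_{\calA}(\bzer)$ is a surface measure, so it is not trivially bounded by $1$, and you cannot simply absorb it into the implicit constant without separately proving it finite). To obtain the stated bound you must cover, or nearly exhaust, the variety by finitely many such charts: this is exactly what the paper does by partitioning $\calM$ according to continuity of the density, selecting pieces $\calM'_{j,j'}$ with $\psi_1 \ge \tau$ parametrised by the first $2s-r$ coordinates over open sets $\scrO_i$, and then choosing $\tau$ and $\del$ small enough that $\vol_{\calA}\bigl(\bigcup_i \scrO_i\bigr) \ge \mu V_{\calA}(\bzer)$; it also uses that the projection $\pi$ onto the first $2s-r$ coordinates has generically finite fibres, so that $\vol_{\calA}(\pi(\calM)) \gg V_{\calA}(\bzer)$ — a step your co-area framing would likewise need in order to relate the ``fibre density'' in pivot coordinates to the quantity defined in~\eqref{5.V_A}. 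With these two repairs (density bounded below on the chosen pieces, plus a near-exhaustion over finitely many charts), your argument becomes the paper's proof.
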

	\begin{proof}
		Define the $r$-tuple of functions 
		$$
			\Phi_j(\bx) = \sum_{i=1}^s\left(\vphi_j(x_i) - \vphi_j(x_{s+i})\right) \qquad (1 \le j \le r)
		$$
		and the manifold
		$$
			\calM = \left\{\bx \in [0,1]^{2s}: \Phi_j(\bx)=0 \qquad (1 \le j \le r)\right\},
		$$
		so that $V_{\calA}(\bzer)$ denotes the $\calA$-weighted $(2s-r)$-dimensional volume of $\calM$. Let $\pi:\R^{2s} \to \R^{2s-r}$ denote the map that is defined by the projection of a point $\bxi \in \R^{2s}$ onto its first $2s-r$ coordinates. Since $\calM$ is algebraic of finite degree, we may assume without loss of generality that a generic fibre $\pi^{-1}(\bzet)$ for $\bzet \in \pi(\calM)$ is a finite set with a bounded number of elements, for should this not be the case, it could easily be arranged by means of a change of variables.  Consequently, in this notation, we have $\vol_{\calA}(\pi(\calM))\gg V_{\calA}(\bzer)$.
		
		For simpler notation, write 
		$$
			\psi_1(\xi) = \frac{\psi(X\xi)}{A(X)} \quad \text{and} \quad \widehat\psi(\bxi) = \prod_{j=1}^{2s} \psi_1(\xi_j), 
		$$
		and partition $\calM$ into a disjoint set of submanifolds $(\calM_j)_{j \in \calJ}$ such that $\widehat\psi$ is continuous on each piece $\calM_j$. 
		Next fix some suitable constant $\tau >0$.  For each $j \in \calJ$ we can find a submanifold $\calM'_j \subseteq \calM_j$ with $\dim \calM'_j=2s-r$ which lies a positive distance $\del$ from the boundary of the unit cube and satisfies $\psi_1(\xi) \ge \tau$ for all components of $\bxi$ whenever $\bxi \in \calM'_j$. Thus, for each $j \in \calJ$ there is a family $\calJ'(j)$ of indices and a pairwise disjoint family of subsets $\bigcup_{j' \in \calJ'(j)} \calM'_{j,j'}\subseteq\calM'_j $ such that each piece $\calM'_{j,j'}$ can be parametrised by the first $2s-r$ coordinates. We abbreviate the notation by setting $\calI = \{(j,j'): j' \in \calJ'(j), j \in \calJ\}$, so that $\calM'_{j,j'} = \calM'_i$. 
		Set now $ \bxi= (\bzet, \btet) \in \R^{2s-r} \times \R^{r}$. Then, for each $i \in \calI$ we can fix an open set $\scrO_i \subseteq [0,1]^{2s-r}$ and a continuous map $f: \scrO_i \rightarrow \R^{r}$ such that $(\bzet, f(\bzet)) \in \calM'_i$ for all $\bzet \in \scrO_i$.
		
		The set
		\begin{align*}
			\scrU_i(\del) = \left\{(\bzet, \btet): \bzet \in \scrO_i, \left|f(\bzet)-\btet\right| < \del  \right\}
		\end{align*}
		still lies in the unit cube.  
		Thus we can compute 
		\begin{align*}
			\vol_{\calA} (\scrU_i(\del)) &= \int_{\scrU_i(\del)} \widehat\psi(\bxi)\d \bxi \\
			&= \int_{\scrO_i}  \left(\int_{f(\bzet) + [-\del, \del]^r}  \left(\prod_{j=1}^{r}\psi_1(\tet_j)\right) \d \btet   \right) \left(\prod_{j=1}^{2s-r}  \psi_1(\zet_j) \right)\d \bzet.
		\end{align*}		
		Since the density function $\psi_1$ is Lipschitz continuous on $\calM_i'$ with some constant $\lam$, we have that $|\psi_1(\tet_j) - \psi_1( f_j(\bzet))| \le \lam |\tet_j - f_j(\bzet)|$ for $1 \le j \le r$. Thus, whenever $\lam\del<\tau/2$, we find that the inner integral is $\gg (\tau\del)^r$, and consequently we obtain the bound
		\begin{align}\label{5.volS}
			\vol_{\calA} (\scrU_i(\del)) \gg_\tau \del^r \vol_{\calA}(\scrO_i). 
		\end{align}

		Furthermore, since $\calM'_i \subseteq \calM$ one has $\Phi_j(\bxi)=0$ for $1 \le j \le r$, and therefore again upon exploiting Lipschitz continuity we deduce that $\left|\Phi_{ j}(\bzet, \btet)\right| \le  (2T)^{-1}$ for $(\bzet, \btet) \in \scrU_i(cT^{-1})$ for some suitable $c>0$. This implies that 
		\begin{align}\label{5.Lipschitz-lowerbd}
			\prod_{j}h_T(\Phi_{j}(\bxi)) \gg T^r \qquad \text{	for all $\bxi \in \scrU_i(cT^{-1})$}.
		\end{align} 
		
		We can now insert the bounds~\eqref{5.Lipschitz-lowerbd} and~\eqref{5.volS} into~\eqref{5.W_T}, and obtain that 
		\begin{align*}
	  		W_T& \gg \sum_{i \in \calI}\int_{\scrU_i(cT^{-1})}  \left(\prod_{j=1}^r h_T(\Phi_{j}(\bxi))\right) \d \sig_{\calA}(\bxi) \\
            &\gg \sum_{i \in \calI} T^r \vol_{\calA}(\scrU_i(cT^{-1})) \\
            &\gg \sum_{i \in \calI}\vol_{\calA}\scrO_i
	  	\end{align*}
	  	whenever $T$ is large enough in terms of the constants $\del$, $c$, $\lam$ and $\tau$.
	  	
		Set $\scrO = \bigcup_{i \in \calI} \scrO_i$. In this notation, by our construction we have clearly $\scrO \subseteq \pi(\calM)$. Moreover, by fixing $\tau$ and $\delta$ small enough, our initial manipulations allow us to choose the set $\scrO$ such as to very nearly exhaust the set $\pi(\calM)$. Thus, we can choose $\scrO$ large enough so that $\vol_{\calA}\scrO \ge \mu V_{\calA}(\bzer)$ for some small constant $\mu$. This completes the proof of the lemma. 
	\end{proof}
	To conclude our arguments, it suffices to combine the conclusion of Lemma~\ref{L5.Schmidt2} with~\eqref{5.Schmidt1} in order to deduce that $\frJ_{\calA} \gg V_{\calA}(\bzer)$. Thus the singular integral is seen to be positive as soon as the rescaled solution set has a positive $\calA$-weighted volume in the real unit cube.

\section{Proofs of Theorems~\ref{T:Waring} and~\ref{T:Waring2}}\label{S6}

As practitioners of the circle method will be well aware, the proofs of Theorems~\ref{T:Waring} and~\ref{T:Waring2} differ only slightly from that of Theorem~\ref{T:VMVT}. However, for the sake of completeness, we give an overview of the changes needed to derive these results. 

We focus first on Theorem~\ref{T:Waring}, and observe that in the setting at hand, it is natural to make the choice $X=n^{1/k}$. Moreover, in this setting we have $r=1$. 
The arguments of Section~\ref{S3} apply now identically, up until the point where~\eqref{3.decompV} is replaced by 
\begin{align*}
	R_{s;k}(n;\calA) &= \int_{\frN(Q)} f_{\calA}^*(\alp)^s  e(-n \alp) \d \alp + O \left(  \int_{\frm(Q)}|f_{\calA}(\alp)|^{s} \d \alp + A^s n^{-1}Q^{5}\left(\frac{E}{A}+\frac{1}{X}\right)\right).
\end{align*}
Consequently, Proposition~\ref{prop-minor} should then be replaced by the following. 
\begin{proposition}\label{prop-W}
	Suppose that Hypotheses~\eqref{D}, \eqref{V1} and~\eqref{W1} are satisfied with parameters $Q_W$, $Q_D$, $\vrho$, $t_0$, $\Del$ and $L$, and define $Y$ as in~\eqref{1.Y}. Under the assumptions of Theorem~\ref{T:Waring}, we have 
	\begin{align*}
		R_{s;k}(n;\calA) = \int_{\frN(Q)} f_{\calA}^*(\alp)^s e(-\alp n) \d \alp + o(A(n^{1/k})^sn^{-1}).
	\end{align*}
\end{proposition}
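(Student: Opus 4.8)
The plan is to repeat the proof of Proposition~\ref{prop-minor} essentially verbatim, tracking only the cosmetic changes forced by the present setting: there is a single exponential sum ($r=1$), one works with $s$ rather than $2s$ variables, and one substitutes $X=n^{1/k}$, so that $X^{-K}$ becomes $n^{-1}$ and the volume factor $Q^{2r+3}$ becomes $Q^5$. Throughout one also carries the factor $e(-n\alp)$ and integrates against it rather than integrating $|f_\calA|^{2s}$, but this affects none of the estimates, since every bound invoked is an absolute-value bound.

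First I would record the starting decomposition displayed immediately before the statement of Proposition~\ref{prop-W}, which is obtained exactly as in Section~\ref{S3}: by Abel summation and Lemma~\ref{L3.approx} one replaces $f_\calA(\alp)$ by $f^*_\calA(\alp)$ on the slightly enlarged major arcs $\frN(Q)$, the error being controlled by $\vol \frN(Q) \ll Q^3 X^{-k}$ together with the pointwise bound of Lemma~\ref{L3.approx}; this yields the analogue of~\eqref{3.approxN} with total error $A^s n^{-1} Q^5(E/A + 1/X)$, and the residual integral over the genuine minor arcs $\frm(Q)$ is estimated trivially by $\int_{\frm(Q)}|f_\calA(\alp)|^s\d \alp$.

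Next, set $\sig = s - 2t_0$, which is positive since $s > 2t_0 + \sig_0$. Combining Hypotheses~\eqref{V1} and~\eqref{W1} exactly as in the proof of Proposition~\ref{prop-minor} gives, for any $Q \le \min\{Q_D,Q_W\}$,
$$
	\int_{\frm(Q)} |f_\calA(\alp)|^s \d \alp \le \Bigl(\sup_{\alp \in \frm(Q)} |f_\calA(\alp)|\Bigr)^{\sig} \int_\T |f_\calA(\alp)|^{2t_0} \d \alp \ll A^s n^{-1}\bigl(Q^{-\vrho}L\bigr)^{\sig}\Del,
$$
which is $o(A^s n^{-1})$ as soon as $Q \ggcurly \Del^{1/(\vrho\sig)}L^{1/\vrho}$, while the $\frN(Q)$-approximation error is $o(A^s n^{-1})$ once $Q \llcurly (A/E)^{1/5}$ and $Q \llcurly X^{1/5}$. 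Hence it only remains to check that a valid $Q$ exists with $\Del^{1/(\vrho\sig)}L^{1/\vrho} \llcurly Q \llcurly Y$ for $Y$ as in~\eqref{1.Y}; the minimal choice of $\sig_0$ via~\eqref{1.sig}, together with the standing assumption $L(X) \ll Y(X)^{\vrho-\del}$, guarantees such a $Q$ whenever $\sig > \sig_0$, i.e. $s > 2t_0 + \sig_0$, which proves the proposition.

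I do not expect a genuine obstacle here: the argument is word-for-word that of Proposition~\ref{prop-minor}, specialised to $r=1$ and with $2s$ replaced by $s$. The only points meriting a little care are bookkeeping ones — confirming that the powers of $A$, of $n$ and of $Q$ match those in the analogue of~\eqref{4.I(Q)} that Section~\ref{S4} uses to expand $\int_{\frN(Q)} f^*_\calA(\alp)^s e(-n\alp)\d \alp$ into $A(n^{1/k})^s n^{-1}$ times a truncated singular series and singular integral, and verifying that the chain $\Del^{1/(\vrho\sig)}L^{1/\vrho} \llcurly Q \llcurly Y$ is non-vacuous, exactly as in the mean-value case.
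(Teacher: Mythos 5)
Your proposal is correct and matches the paper's own treatment: the paper simply notes that the arguments of Section~\ref{S3} and the proof of Proposition~\ref{prop-minor} carry over verbatim with $r=1$, $X=n^{1/k}$ and $s$ in place of $2s$, yielding the decomposition with error $A^s n^{-1}Q^5(E/A+1/X)$ and the minor-arc bound via $\sig=s-2t_0>\sig_0$, exactly as you describe.
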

The arguments of Section~\ref{S4} now apply again identically. Finally, in Section~\ref{S5} we replace our definition of $B_\calA(q)$ by 
$$
	B_\calA(q;n) = \sum_{\substack{b=1 \\ (b,q)=1}}^q S_{\calA}(q, b)^{s} e(-bn/q),
$$
and find entirely analogously to before that under Condition~\eqref{C} the singular series $\frS_{\calA}(n)$ has a product representation of the form 
$$
	\frS_{\calA}(n) = \prod_p \chi_p(n;\calA),
$$
where 
$$
	\chi_p(n;\calA) = \sum_{h=0}^{\infty} B_\calA(p^h;n).
$$
Lemma~\ref{L5.Gam} holds by the same argument with the amended counting function 
\begin{align*}
	\Gam_{\calA}(q) &= \sum_{b \mmod q} S_\calA(q,b)^s e(-b/q)\\
	&=\sum_{\bx \mmod q}\sum_{\by \mmod q} \mathbbm{1}_{[x_1^k+\ldots+x_s^k \equiv n \mmod q]} \kap(q,x_1) \cdots \kap(q,x_s).
\end{align*}
Thus, the singular series has an interpretation in terms of $p$-adic solution densities of the underlying equation as before.

In a similar way, the singular integral is now given by 
$$
	\frJ_{\calA}(n) = \int_{\R} w_{\calA}(\gam)^s e(-\gam) \d \gam,
$$
and has an interpretation as the volume with respect to the measure $\sig_{\calA}$ of the set of $\xi_1, \ldots, \xi_s \in [0,1]$ satisfying $\xi_1^k + \ldots + \xi_s^k = 1$. Since this variety is smooth, the argument surrounding Equation~\eqref{5.V_A} is sufficient in this case. 
\medskip

We turn now to the proof of Theorem~\ref{T:Waring2}. Just as before, put $X=n^{1/k}$. Define the exponential sum 
$$
	g(\alp;X) = \sum_{x \le X} e(\alp x^k),
$$
so that 
$$
	R_{s,u;k}(n;\calA) = \int_{\T} f_{\calA}(\alp;X)^s g(\alp;X)^u e(-n\alp) \d \alp.
$$

From \cite[Theorem~2.1(b)]{BP2} we have the bound 
\begin{align}\label{6.weyl}
	\sup_{\alp \in \frm_X(Q)} |g(\alp;X)| \ll X Q^{-\vrho_0}  \log X,
\end{align}
where $\vrho_0$ is as defined in~\eqref{0.rho}. Thus, $g(\alp;X)$ satisfies Condition~\eqref{W1} with $\vrho=\vrho_0$ and $L(X)=\log X$. Define next the complete exponential sum 
$$
	S(q,a) = q^{-1} \sum_{x=1}^q e(ax^k/q)
$$
and the exponential integral 
$$
	v(\bet; X)= \int_0^X e(\bet x^k) \d x,
$$
and for $\alp = a/q+\bet$ write $g^*(\alp) = S(q,a)v(\bet;X)$. In this situation, \cite[Theorem~4.1]{V:HL} states that 
$$
	|g(\alp)-g^*(\alp)| \ll q^{1/2+\eps}(1 + X^k|\bet|)^{1/2}.
$$
Suppose now that $\alp \in \frN(Q)$. Then, upon combining this last bound with Lemma~\ref{L3.approx}, we obtain 
\begin{align*}
	|f_{\calA}(\alp)^s g(\alp)^u - f_{\calA}^*(\alp)^s g^*(\alp)^u| \ll A(X)^sX^{u}Q^2\left(\frac{E(X)}{A(X)}+\frac{1}{X}\right),
\end{align*}
and hence the analogue of~\eqref{3.approxN} is
\begin{align}\label{6.approxN}
	\int_{\frN(Q)} |f_{\calA}(\alp)^s g(\alp)^u - f_{\calA}^*(\alp)^s g^*(\alp)^u| \d \alp \ll A(X)^sX^{u-k}Q^5\left(\frac{E(X)}{A(X)}+\frac{1}{X}\right).
\end{align}
Our analogue of Proposition~\ref{prop-minor} then takes the following shape.
\begin{proposition}\label{prop-W2}
	Suppose that Hypotheses~\eqref{D} and~\eqref{V1} are satisfied with parameters $Q_D$, $t_0$ and  $\Del$, and define $Y$ as in~\eqref{1.Y2}. Under the assumptions of Theorem~\ref{T:Waring2}, we have 
	\begin{align*}
		R_{s,u; k}(n; \calA) = \int_{\frN(Q)} f_{\calA}^*(\alp)^s g^*(\alp)^u e(-\alp n) \d \alp + o(A(X)^sX^{u-k}).
	\end{align*}
\end{proposition}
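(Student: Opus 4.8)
The plan is to mimic the proof of Proposition~\ref{prop-minor} almost verbatim, the one genuine difference being that the $u$ supplementary variables are now dispatched by the Weyl bound~\eqref{6.weyl} rather than by a mean value estimate. First I would record the analogue of the decomposition~\eqref{3.decompV}: writing $R_{s,u;k}(n;\calA) = \int_{\T} f_{\calA}(\alp)^s g(\alp)^u e(-n\alp)\,\d\alp$, splitting $\T = \frN(Q)\cup\frn(Q)$ with $\frn(Q)\subseteq\frm_X(Q)$, and inserting the bound~\eqref{6.approxN} on $\frN(Q)$, one gets for every $Q\le Q_D$
\[
	R_{s,u;k}(n;\calA) = \int_{\frN(Q)} f_{\calA}^*(\alp)^s g^*(\alp)^u e(-n\alp)\,\d\alp + O\!\left(\int_{\frm(Q)} |f_{\calA}(\alp)|^s |g(\alp)|^u\,\d\alp + A^s X^{u-k} Q^5\!\left(\tfrac{E}{A}+\tfrac1X\right)\right).
\]
The last error term is $o(A^s X^{u-k})$ as soon as $Q\llcurly (A/E)^{1/5}$ and $Q\llcurly X^{1/5}$, so one should in any case keep $Q\llcurly Y_2(X)$, which also ensures $Q\le Q_D$.

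For the minor arc integral I would peel off all $u$ copies of $g$ with~\eqref{6.weyl}, leaving a pure moment of $f_{\calA}$ that is controlled by Hypothesis~\eqref{V1}:
\[
	\int_{\frm(Q)} |f_{\calA}(\alp)|^s |g(\alp)|^u\,\d\alp \le \Bigl(\sup_{\alp\in\frm_X(Q)}|g(\alp)|\Bigr)^u \int_{\T}|f_{\calA}(\alp)|^s\,\d\alp \ll \bigl(XQ^{-\vrho_0}\log X\bigr)^u I_{s/2,k}(X;\calA).
\]
Since $s\ge 2t_0$, Hypothesis~\eqref{V1} applies with $t=s/2\ge t_0$ and gives $I_{s/2,k}(X;\calA)\ll A(X)^s X^{-k}\Del(X)$, so the minor arc contribution is $\ll A(X)^s X^{u-k}\bigl(Q^{-\vrho_0}\log X\bigr)^u\Del(X)$. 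It therefore suffices to exhibit a $Q$ with $Q\llcurly Y_2(X)$ and $\bigl(Q^{-\vrho_0}\log X\bigr)^u\Del(X)\llcurly 1$, i.e.\ with $Q^{u\vrho_0}\ggcurly \Del(X)(\log X)^u$.

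That such a $Q$ exists is a short computation combining the defining property~\eqref{1.sig2} of $\sig_0$ (taken with $Z=X$), the standing hypothesis $Y_2(X)^{\vrho_0-\del}\gg\log X$, and the assumption $u>\sig_0$: choosing $\eps$ small enough that $u-\sig_0-\eps>0$, one finds $\Del(X)(\log X)^u \ll Y_2(X)^{\vrho_0 u-\del'}$ for $\del'=\del(u-\sig_0-\eps)>0$, an exponent strictly below $u\vrho_0$. Hence $Q=Y_2(X)^{1-\eta}$ meets both requirements once $\eta>0$ is small enough, all the error terms collapse to $o(A^s X^{u-k})$, and the stated identity follows. I expect this last step — absorbing the factor $(\log X)^u$ produced by the Weyl bound into a small power of $Y_2(X)$ — to be the only point where the argument deviates in substance from Proposition~\ref{prop-minor}, and it is precisely here that the extra hypothesis $Y_2(X)^{\vrho_0-\del}\gg\log X$, which has no counterpart in the mean value setting of Theorem~\ref{T:VMVT}, is needed.
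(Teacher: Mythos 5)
Your proposal is correct and follows essentially the same route as the paper: the decomposition via~\eqref{6.approxN}, peeling off the $u$ copies of $g$ on the minor arcs with the Weyl bound~\eqref{6.weyl} and bounding the remaining moment of $f_{\calA}$ by Hypothesis~\eqref{V1}, then choosing $Q$ with $\Del(\log X)^u \llcurly Q^{u\vrho_0}$ and $Q \llcurly Y_2$. Your final computation absorbing $(\log X)^u$ via the hypothesis $Y_2(X)^{\vrho_0-\del}\gg\log X$ is exactly the verification the paper leaves implicit by reference to the earlier proposition.
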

\begin{proof}
	From Hypothesis~\eqref{V1} together with~\eqref{6.weyl}, we deduce that 
	\begin{align*}
		\int_{\frm(Q)} |f_{\calA}(\alp)^s g(\alp)^u| \d \alp &\ll \sup_{\frm(Q)} |g(\alp)|^u \int_{\T} |f_{\calA}(\alp)|^s \d \alp \\
		&\ll (X \log X)^u Q^{-u\vrho} A(X)^{s}X^{-k}\Del.
	\end{align*}
	To control this error, we require that $Q^{u\vrho} \ggcurly \Del (\log X)^u$. On the other hand, just like in the proof of Proposition~\ref{prop-W}, we also need $Q \llcurly Y$ in order to make sure that Hypothesis~\eqref{D} is applicable and the error in~\eqref{6.approxN} is under control. Together, this implies the statement just as in the earlier proposition. 
\end{proof}

In the analysis of the major arc contribution, we again set $\gam = X^k \bet$ and  
$$
	w(\gam;X) = \int_0^1 e(\gam \xi^k) \d \xi,
$$ 
so that $v(\bet;X)= X w(\gam)$. 
Set now 
$$
	\frS^*_{\calA} = \sum_{q \le Q} \sum_{\substack{b \mmod q \\ (b,q)=1}} S_{\calA}(q,b)^s S(q,b)^u e(-bn/q)
$$
and 
$$
	\frJ_{\calA}^*(Q) = \int_{|\gam| \le Q} w_\calA(\gam)^s w(\gam)^u e(-\gam ) \d \gam.
$$
We could bound $S(q,b)$ and $w(\gam)$ by the strong bounds presented in \cite[Chapter~4]{V:HL}, but we prefer to imitate the analysis of Section~\ref{S4} in order to illustrate how the bounds thus obtained, albeit weaker than those given in Vaughan's monograph, are still fully sufficient for our purposes. 

Observe that Lemmata~\ref{L4.w-bd} and~\ref{L4.q-bd}, respectively, applied to $\calA = \N$, show the following. 

\begin{lemma}\label{L6.weyl}
	Assume that $Z \ge |\gam|^2$. Then 
	\begin{align*}
		w(\gam) \ll (1+|\gam|)^{-\vrho} \log(Z).
	\end{align*}
	
	Similarly, when $Z \ge q^2$, then for any $b \in \Z/q\Z$ with $(q,b)=1$ we have 
	\begin{align*}
		S(q,b) \ll q^{-\vrho} \log(Z).
	\end{align*}
\end{lemma}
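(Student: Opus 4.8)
The plan is to observe that this lemma is nothing more than the special case $\calA = \N$ (that is, $a_n \equiv 1$) of Lemmata~\ref{L4.w-bd} and~\ref{L4.q-bd}, together with the input that in this case the Weyl-type hypothesis~\eqref{W1} holds with the standard parameters. First I would note that when $\calA = \N$ we have $A(X) = X$, so we may take $\Psi(X) = X$ and $E(X) \ll 1$; hence $Q_D = X$ in Hypothesis~\eqref{D} with $\kap(q,b) = 1/q$ for all $b$, which trivially satisfies~\eqref{C}. With this choice, $w_\calA(\gam)$ as defined in~\eqref{4.w-def} reduces to $\int_0^1 e(\gam \xi^k)\,\d\xi = w(\gam)$, and $S_\calA(q,b)$ in this setting is $q^{-1}\sum_{x \bmod q} e(bx^k/q) = S(q,b)$, matching the notation introduced just before the statement.

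Next I would recall the classical Weyl inequality in the form needed: for $\calA = \N$ the sum $g(\alp;X) = \sum_{x \le X} e(\alp x^k)$ satisfies, for $\alp \in \frm_X(Q)$,
\begin{align*}
	g(\alp;X) \ll X Q^{-\vrho_0} \log X,
\end{align*}
which is precisely~\eqref{6.weyl}, obtained from \cite[Theorem~2.1(b)]{BP2}. Thus Hypothesis~\eqref{W1} holds for $g$ with $\vrho = \vrho_0$, $Q_W = X$ and $L(X) = \log X$. Feeding these parameters into Lemma~\ref{L4.w-bd}, condition~\eqref{4.Y-bd1} becomes the requirement
\begin{align*}
	|\gam| \le \min\{X, X, X^{1/(1+\vrho_0)}, X^{1/2}\} = X^{1/(1+\vrho_0)},
\end{align*}
since $E(X) \ll 1$ forces $(A(X)/E(X))^{1/(1+\vrho_0)} \asymp X^{1/(1+\vrho_0)}$, and this is implied by the hypothesis $Z \ge |\gam|^2$ of the present lemma with $X = Z$. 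Lemma~\ref{L4.w-bd} then yields $w(\gam) \ll (1+|\gam|)^{-\vrho_0} L(Z) = (1+|\gam|)^{-\vrho_0}\log Z$, which is the first assertion. The second assertion follows identically from Lemma~\ref{L4.q-bd}: under $Z \ge q^2$ the condition~\eqref{4.q-bd} holds, and we conclude $S(q,b) \ll q^{-\vrho_0}\log Z$ for $(q,b)=1$.

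There is no real obstacle here — the lemma is a bookkeeping exercise in specialising earlier results, and the only point requiring a word of care is checking that with $\calA = \N$ all three hypotheses~\eqref{D}, \eqref{V} (irrelevant for these two lemmata, which use only~\eqref{D} and~\eqref{W}) and~\eqref{W} genuinely hold with the claimed parameters, and that the quantity $\min\{\dots\}$ appearing in~\eqref{4.Y-bd1} and~\eqref{4.q-bd} simplifies to a power of $X$ that is dominated by $Z^{1/2}$, so that the stated hypothesis $Z \ge |\gam|^2$ (respectively $Z \ge q^2$) suffices. One should also remark that $\vrho_0 \le 1/(k(k-1))$ is consistent with the classical Weyl bound, so that $1/(1+\vrho_0) \ge 1/2$ and the binding constraint in the minimum is indeed $Z^{1/2}$, matching the square-root hypothesis.
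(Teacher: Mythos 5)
Your proposal is correct and coincides with the paper's own proof, which is exactly the observation that Lemmata~\ref{L4.w-bd} and~\ref{L4.q-bd} applied to $\calA=\N$ (with $A(X)=X$, $E(X)\ll 1$, $Q_D=Q_W=X$, and \eqref{6.weyl} supplying Hypothesis~\eqref{W1} with $\vrho=\vrho_0$, $L(X)=\log X$) yield the stated bounds. One trivial slip: since $1/(1+\vrho_0)\ge 1/2$, the minimum in \eqref{4.Y-bd1} evaluates to $Z^{1/2}$ rather than $Z^{1/(1+\vrho_0)}$, but as your closing remark already notes, this is precisely why the hypothesis $Z\ge|\gam|^2$ (respectively $Z\ge q^2$) suffices.
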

Meanwhile, Lemma~\ref{L4.upper} applies identically. For any $Q>0$ pick now $Z_Q$ such that $Q=Y(Z_Q)$, where $Y$ is as in~\eqref{1.Y2}. 
Thus, upon combining Lemma~\ref{L4.upper} with Lemma~\ref{L6.weyl}, we see that in the place of~\eqref{4.J-dyad}, we obtain 
\begin{align*}
	\frJ_\calA^*(2Q) - \frJ^*_\calA(Q)& \ll \sup_{|\bgam| \ge Q} |w(\gam)|^{u} \scrJ(2Q)\ll Q^{-u \vrho}\log(Z_Q)^u \Del(Z_Q).
\end{align*}
Similarly, the analogue of~\eqref{4.S-dyad} is given by 
\begin{align*}
	\frS_\calA^*(2Q) - \frS^*_\calA(Q)& \ll \sup_{q \ge Q}  \max_{b \in (\Z / q \Z)^*}|S(q,b)|^{u} \scrS(2Q)\ll Q^{-u \vrho}\log(Z_Q)^u \Del(Z_Q).
\end{align*}
Recall now that in the hypotheses of Theorem~\ref{T:Waring2} we demand that $u>\sig_0$, where $\sig_0$ is given via~\eqref{1.sig2}. Thus, the same arguments as in the end of Section~\ref{S4} show, \emph{mutatis mutandis}, that both the singular series and the singular integral may be extended to infinity. 

In the discussion of the singular series, we define now 
$$
	B_{\calA,u}(q;n) = \sum_{\substack{b=1 \\ (b,q)=1}}^q S_{\calA}(q, b)^{s} S(q, b)^{u}e(-bn/q).
$$
Assuming Condition~\eqref{C}, the singular series $\frS_{\calA}^*(n)$ has a product representation 
$$
\frS^*_{\calA}(n) = \prod_p \chi^*_p(n)
$$
as before, where now the factors $\chi^*_p$ are defined in terms of $B_{\calA,u}(q;n)$ rather than $B_{\calA}(q;n)$. Again, a very slight modification of the arguments of Lemma~\ref{L5.Gam} show that 
$$
	\chi^*_p(n) = \lim_{h \to \infty} p^{h} \Gam_{\calA,u}(p^h),
$$
where this time we have the function 
\begin{align*}
	\Gam_{\calA,u}(q) &= \sum_{b \mmod q} S_\calA(q,b)^s S(q,b)^u e(-b/q)\\
	&=q^{-u} \sum_{\bx \mmod q}\sum_{\by \mmod q} \mathbbm{1}_{[x_1^k+\ldots+x_s^k+y_1^k+\ldots+y_u^k \equiv n \mmod q]} \kap(q,x_1) \cdots \kap(q,x_s),
\end{align*}
and we obtain an interpretation of the singular series in terms of $p$-adic solution densities of the underlying equation just as in the other cases.

In a similar manner, the same deliberations as in the previous section lead us to a singular integral of the shape 
$$
	\frJ^*_{\calA,u}(n) = \int_{\R} w_{\calA}(\gam)^s w(\gam)^u e(-\gam) \d \gam.
$$
For $\bxi \in [0,1]^s$ and $\bzet \in [0,1]^u$ define the $(s+u)$-dimensional measure $\sig^{\dagger}_{\calA}$ by putting 
$$
	\d \sig^{\dagger}_{\calA}(\bxi, \bzet) = \d \sig_{\calA}(\bxi) \wedge \d \bzet.
$$
In this notation, the same arguments as before show that $\frJ^*_{\calA,u}(n)$ has an interpretation as the volume with respect to $\sig^{\dagger}_{\calA}$ of the set of $\bxi \in [0,1]^s, \bzet \in [0,1]^u$ for which 
$$
	\xi_1^k + \ldots + \xi_s^k + \zet_1^k + \ldots + \zet_u^k = 1.
$$ 

\section{Proofs of the teaser theorems}
It remains to show how the results teased in the introduction can be obtained from our general theorems. 
We begin with the proof of Theorem~\ref{T0:Akshat}. Define $\Psi=\Psi^*$ as in \eqref{3.functions}, counting every element $x \in \calA$ with weight $1$, and recall the definitions of the logarithmic lower density $\lam(\calA)$ and of $\vrho_0(k)$ from~\eqref{0.def-lam} and~\eqref{0.rho}, respectively.
	
Consider first the situation described in \eqref{T0:Akshat-VMVT}. Since the mean value $I_{t,k}(X;\calA)$ describes the number of solutions $x_1, \ldots, x_{2t} \in \calA(X)$ of the equation 
$$
	x_1^k+\ldots+x_t^k = x_{t+1}^k+\ldots + x_{2t}^k,
$$
it then follows immediately from \cite[Corollary~14.8]{W:NEC} (see also \cite[Theorem~2.1]{BP2}) that 
$$
	I_{t,k}(X;\calA) \le I_{t,k}(X;\N) \ll X^{2t-k+\eps}
$$ 
for all $t \ge \tfrac12 \vrho_0(k+1)^{-1}$. Consequently, Hypothesis~\eqref{V1} is satisfied with $t_0 = \tfrac12 \vrho_0(k+1)^{-1}$ and
$$
	\Delta(X) \ll A(X)^{\eps}(X/A(X))^{\vrho_0(k+1)^{-1}} \ll A(X)^{{\vrho_0(k+1)^{-1}}(\lam^{-1}-1)+\eps}.
$$  
Clearly, for $\lam^{-1} < 1+\vrho_0(k+1)\vrho_0(k)/5$ we have $\vrho_0(k+1)^{-1}(\lam^{-1}-1) < \vrho_0(k)/5$ and hence $\Del(X) \ll A(X)^{\vrho_0/5-\del}$ for some $\del>0$ (assuming that $\eps$ had been taken small enough).

Consider now the situation described in part \eqref{T0:Akshat-convex} of the theorem. As mentioned in the discussion of the result, the main ingredient here is a recent result by Mudgal \cite{mudgal-convex} concerning higher energies of $k$-convex sets. An ordered set $\calA = \{x_1, x_2, \ldots\} \subseteq \N$ is called $1$-convex if the sequence of its consecutive differences is strictly increasing, that is, $x_{n+1}-x_n < x_{n+2}-x_{n+1}$ for all $n$. One then continues inductively by defining a set $\calA$ to be $k$-convex if the set of its consecutive differences is a $(k-1)$-convex set. It is easy to see that the set of $k$-th powers of a set $\calA$ satisfying condition~\eqref{T0:Akshat-cond1} of Theorem~\ref{T0:Akshat} will be $(k-1)$-convex.
	
It thus follows immediately from \cite[Theorem~1.6]{mudgal-convex} that Hypothesis~\eqref{V1} is satisfied with $\Delta(X) \ll (X/A(X))^k  A(X)^{\vrho_0/10} \ll A(X)^{k(\lam^{-1}-1) + \vrho_0/10}$ and $t_0 = \widetilde C 2^k \log(10k/\vrho_0)$ for some suitable constant $\widetilde C$. Clearly, for $\lam^{-1} < 1-\vrho_0(k)/(10k)$ we have $k(\lam^{-1}-1) < \vrho_0/10$ and hence $\Del(X) \ll A(X)^{\vrho_0/5-\del}$ for some $\del>0$, so that Corollary~\ref{cor-W2} is applicable. We note finally that in the range $k \ge 6$ we have $\vrho_0^{-1}=(k-1)(k-2) + 2 \lfloor \sqrt{2k} \rfloor$, so that for these values of $k$ the bound becomes $s> 10 \widetilde C 2^k \log k$. Clearly, for $k \in \{2,3,4,5\}$ a similar conclusion may be obtained by, if necessary, modifying the constant $\widetilde C$. 
	
In either case, the conclusion of Theorem~\ref{T0:Akshat} now follows from Corollary~\ref{cor-W2} upon noting that condition~\eqref{D} was already assumed as one of the hypotheses of the theorem.

Before establishing our remaining teasers, we pause to show that \eqref{D} automatically implies \eqref{C} whenever $\kap(q,b)$ is of the shape 
\begin{align}\label{3.kap}
	\kap(q,b) = \begin{cases}(\# \calA_q)^{-1} & \text{if } b \in \calA_q, \\ 0 & \text{else,} \end{cases}
\end{align}
for a set of residue classes $\calA_q \subseteq \Z/q\Z$. 
\begin{lemma}\label{L3.kap-mult}
	Suppose that $\calA$ satisfies Property~\eqref{D} with $\kap$ of the shape given by \eqref{3.kap} for all $q$ and $b$. Then \eqref{C} holds. 
\end{lemma}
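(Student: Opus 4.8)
The plan is to reduce \eqref{C} to a purely combinatorial statement about the admissible residue sets and then to establish that statement from the additivity relations already recorded in the excerpt. Writing $\calA_q = \{b \mmod{q} : \kap(q,b) > 0\}$ for the support of $\kap(q,\cdot)$, I note first that, since \eqref{3.kap} is assumed at \emph{every} modulus, \eqref{C} is equivalent to the assertion that, for coprime $q,q'$, the Chinese Remainder isomorphism $\Z/qq'\Z \cong \Z/q\Z \times \Z/q'\Z$ carries $\calA_{qq'}$ onto $\calA_q \times \calA_{q'}$. Indeed, since $qb'+q'b$ reduces to $q'b$ modulo $q$ and to $qb'$ modulo $q'$, under \eqref{3.kap} the left side of \eqref{C} equals $(\#\calA_{qq'})^{-1}\mathbbm{1}_{[qb'+q'b\,\in\,\calA_{qq'}]}$ while the right side equals $(\#\calA_q)^{-1}(\#\calA_{q'})^{-1}\mathbbm{1}_{[q'b\,\in\,\calA_q]}\mathbbm{1}_{[qb'\,\in\,\calA_{q'}]}$; as $(b,b')$ runs over $\Z/q\Z\times\Z/q'\Z$ the residue $qb'+q'b$ runs over all of $\Z/qq'\Z$, so the two sides agree for all $b,b'$ precisely when $\calA_{qq'}$ is the CRT-image of $\calA_q\times\calA_{q'}$ and $\#\calA_{qq'}=\#\calA_q\cdot\#\calA_{q'}$.

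Next I would extract the ``easy half'' from the fibre-additivity relation \eqref{2:kappa-add}. Inserting \eqref{3.kap} into \eqref{2:kappa-add} with moduli $q$ and $qq'$ gives, for every $b\mmod{q}$,
\begin{align*}
	\frac{\mathbbm{1}_{[b\,\in\,\calA_q]}}{\#\calA_q}=\kap(q,b)=\sum_{\substack{c\mmod{qq'}\\ c\equiv b\mmod{q}}}\kap(qq',c)=\frac{\#\{c\in\calA_{qq'}:c\equiv b\mmod{q}\}}{\#\calA_{qq'}},
\end{align*}
so the fibre of the reduction $\calA_{qq'}\to\Z/q\Z$ over any $b\in\calA_q$ has exactly $\#\calA_{qq'}/\#\calA_q$ elements while the fibres over $\Z/q\Z\setminus\calA_q$ are empty; by symmetry the same holds for reduction modulo $q'$. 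Consequently $\calA_{qq'}$ projects onto $\calA_q$ and onto $\calA_{q'}$, hence under CRT $\calA_{qq'}\subseteq\calA_q\times\calA_{q'}$, and the induced bipartite incidence structure between $\calA_q$ and $\calA_{q'}$ is biregular.

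The hard part will be to turn this inclusion into an equality, that is, to prove $\#\calA_{qq'}=\#\calA_q\cdot\#\calA_{q'}$: biregularity of the incidence structure by itself does not force it to be complete. Here I would use that in our setting the admissible classes are cut out by \emph{independent} local conditions at the distinct primes dividing $qq'$, so that $\#\calA_\bullet$ is multiplicative on coprime arguments; equivalently, one argues from hypothesis \eqref{D} applied simultaneously at the moduli $q$, $q'$ and $qq'$ that the constant fibre size $\#\calA_{qq'}/\#\calA_q$ equals $\#\calA_{q'}$. With that in hand, every pair $(b,b')\in\calA_q\times\calA_{q'}$ corresponds to an element of $\calA_{qq'}$, so $\calA_{qq'}$ is exactly the CRT-image of $\calA_q\times\calA_{q'}$ with $\#\calA_{qq'}=\#\calA_q\cdot\#\calA_{q'}$. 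Substituting back into the equivalence of the first paragraph yields \eqref{C}, completing the proof.
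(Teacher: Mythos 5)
Your first two paragraphs are sound and run parallel to the paper's own reduction: recasting \eqref{C} as the assertion that, under the Chinese remainder isomorphism, $\calA_{qq'}$ coincides with $\calA_q\times\calA_{q'}$, and extracting from \eqref{2:kappa-add} together with \eqref{3.kap} that the reduction $\calA_{qq'}\to\Z/q\Z$ has empty fibres outside $\calA_q$ and fibres of constant size $\#\calA_{qq'}/\#\calA_q$ over $\calA_q$, is exactly the content of \eqref{3.3} and \eqref{3.4} in the paper's argument. The problem is your third paragraph, which you yourself flag as the hard part and then do not prove. The appeal to ``independent local conditions at the distinct primes dividing $qq'$'' is not among the hypotheses of the lemma; it is essentially a restatement of the conclusion \eqref{C}, so invoking it is circular. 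Nor can the fallback claim --- that the constant fibre size $\#\calA_{qq'}/\#\calA_q$ equals $\#\calA_{q'}$ ``from hypothesis \eqref{D} applied simultaneously at the moduli $q$, $q'$ and $qq'$'' --- be substantiated by counting alone: all that \eqref{D} gives about nested moduli is the additivity relation \eqref{2:kappa-add}, and that yields precisely the biregularity you already derived, nothing more. Concretely, for the unweighted set $\calA^\circ=\{n\in\N:\ n\equiv 1\ \text{or}\ 2 \mmod 6\}$ one checks that $\kap(q,b)$ has the shape \eqref{3.kap} for every $q$, with $\calA_2=\{0,1\}$, $\calA_3=\{1,2\}$, yet $\calA_6=\{1,2\}$; the incidence structure between $\calA_2$ and $\calA_3$ is biregular without being complete, and $\#\calA_6=2\neq\#\calA_2\cdot\#\calA_{3}$. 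This shows that the step you leave open cannot be recovered from the counting relations you propose to use.

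The paper closes this step by a different mechanism: it reads membership in $\calA_{qq'}$ through the support of the weights (a class lies in $\calA_{qq'}$ when it contains some $n$ with $a_n>0$) and then argues, with $\ell$ fixed by $\ell q'\equiv b\mmod q$, that $\ell'q+\ell q'\in\calA_{qq'}$ precisely when $\ell' q\in\calA_{q'}$, so that each fibre over $b\in\calA_q$ has exactly $\#\calA_{q'}$ elements and \eqref{3.3} forces $\#\calA_{qq'}=\#\calA_q\,\#\calA_{q'}$. The example above makes clear that this equivalence of admissibility at the modulus $qq'$ with admissibility at the modulus $q'$ is where the real content of the lemma sits (there, the class $4\mmod 6$ contains no element of $\calA^\circ$ although its reduction $1\mmod 3$ does), and it is exactly the input your proposal asserts rather than supplies. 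As it stands, therefore, the proposal is not a complete proof: the decisive multiplicativity $\#\calA_{qq'}=\#\calA_q\,\#\calA_{q'}$ is assumed in disguise rather than derived.
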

\begin{proof}
	Observe first that for $\kap$ given by \eqref{3.kap}, the relation \eqref{2:kappa-add} is equivalent to 
	\begin{align}\label{3.3}
		(\#A_q)^{-1} = (\#\calA_{qq'})^{-1} \# \{c \in \calA_{qq'}: c \equiv b \mmod q\}.
	\end{align}
	Suppose now that $(q,q')=1$. As a consequence of the Chinese remainder theorem, every $c \in \Z/qq'\Z$ has a unique representation of the form $c = \ell'q+\ell q'$ with $\ell \in \Z/q\Z$ and $\ell' \in \Z/q'Z$. Moreover, the condition that $c \equiv b \mmod q$ implies that $\ell \in \Z/q\Z$ is fixed. Consequently, we have 
	\begin{align}\label{3.4}
		\# \{ c \in \calA_{qq'}: c \equiv b \mmod q   \} = \# \{ \ell' \in \Z/q'\Z: \ell'q+ \ell q' \in \calA_{qq'} \},
	\end{align} 
	where $\ell$ is the unique element in $\Z/q\Z$ for which $\ell q' \equiv b \mmod q$. Denote the set on the right-hand side of~\eqref{3.4} by $\calU_{q,q'}(\ell)$. By the definition of $\calA_{qq'}$, the relation $\ell' \in \calU_{q,q'}(\ell)$ implies that there exists some $n \equiv \ell' q + \ell q' \mmod{qq'}$ for which $a_n >0$. Since clearly $n \equiv \ell'q \mmod{q'}$, we discern that $\ell' \in \calU_{q,q'}(\ell)$ precisely when $\ell'q \in \calA_{q'}$, and since $(q,q')=1$, we are forced to conclude that $\#\calU_{q,q'}(\ell)=\#\calA_{q'}$. Together with~\eqref{3.4} and~\eqref{3.3}, this completes the proof. 
\end{proof}	
	
We now turn to the proof of Theorem \ref{T0:Kirsti}.
As in the previous example, we define $\Psi=\Psi^*$ as in \eqref{3.functions}, and count every element of $\calE_p$ with weight $1$. By definition, our ellipsephic sets are equidistributed over a set of admissible residue classes modulo the base prime $p$ up to an error of at most $1$, so that $\kappa(p,b) = r^{-1}$ for $b \in \calD_p$, where $r=\#\calD_p$ denotes the number of permissible residue classes modulo $p$, and $\kappa(p,b)=0$ otherwise. When $q=p^h$, we have $r^h$ permissible residue classes $\mmod q$, and these are again equidistributed by construction, so that $A(p^h,b;X) = A(X)/r^h + O(1)$ for all admissible values $b$. Meanwhile, when $(q,p)=1$, then congruence conditions modulo $q$ and modulo $p$ are independent, and we have $\kappa(q,b)=q^{-1}$ for all $b \in \Z/q\Z$. Finally, by the Chinese remainder theorem the congruence conditions combine, so that for general $q = q_1 q_2$ with $(q_1, p)=1$ and $q_2 = p^h$, we obtain equidistribution with $\kap(q,b)=(q_1r^h)^{-1}$ for all admissible residue classes $b \in \Z/q_1q_2\Z$. Consequently, the ellipsephic set $\calE_p$ satisfies Hypothesis~\eqref{D} with $E(X)=1$ and $Q_D(X)=X$. Moreover, since the function $\kap$ is of the shape~\eqref{3.kap}, we also have Hypothesis~\eqref{C}.

We see from \cite[Theorem 1.2]{biggs2}, in combination with an argument akin to that of \cite[Chapter~7.1]{V:HL}, that for $t_0=m k(k+1)/2$, we have
\begin{align*}
    I_{t,k}(X;\calE_p) \ll X^{-k+\eps} A(X)^{2t} (X/A(X)^{m})^{k(k+1)/2}\qquad \text{for all $t \ge t_0$},
\end{align*}
which implies that the ellipsephic set $\calE_p$ satisfies Hypothesis~\eqref{V1} with 
$$
    \Del(X)=X^{\eps}(X/A(X)^{m})^{k(k+1)/2}.
$$
Suppose that the condition \eqref{0.ellips-cond} is satisfied. Then, recalling (\ref{0.def-lam}), we obtain
$$
\Del(X)\ll A(X)^{\eps +\frac{k(k+1)}{2}(\lambda^{-1}-m)} \ll A(X)^{\ome}
$$
for some $\ome < \vrho_0/5$, given a sufficiently small value of $\eps$. 
Consequently, we have met the hypotheses of Corollary \ref{cor-W2}, and Theorem \ref{T0:Kirsti} follows.

Finally, we turn to the case $\calA = \calP$ of primes, so that $A(X)=\pi(X)$ is the prime counting function. In this setting, as a result of the prime number theorem we may take $\Psi(\xi) = \li(\xi)$ for $\xi \ge \tau$ for some $\tau>2$, where as usual 
\begin{align}\label{7.li}
	\li(X) = \int_2^X \frac{\d t}{\log t} = \frac{X}{\log X} + O \left(\frac{X}{(\log X)^2}\right).
\end{align} 
For $0 \le \xi \le \tau$ we put $\Psi(\xi)=\li(\tau) \xi/\tau$, so that $\psi(\xi)=\li(\tau)/\tau$. 
We also recall the Siegel--Walfisz theorem (see e.g. \cite[Corollary~11.21]{MV:MNT}), which states that for any constant $B$ and for any $q \le (\log X)^B$ and any $b$ coprime to $q$ one has
\begin{align*}
	A(q,b;X)= \frac{\pi(X)}{\vphi(q)} + O(X e^{-\gamma \sqrt {\log X}}),
\end{align*}
where the constant $\gamma$ depends on $B$, and $\vphi$ denotes Euler's totient function. Thus in particular condition~\eqref{D} is true with parameters $Q_D = (\log X)^B$ and $E(X) = X e^{-\gamma \sqrt {\log X}}$. Since $\kap(q,b)=\vphi(q)^{-1}$ for $(q,b)=1$ and $\kap(q,b)=0$, it follows from Lemma~\ref{L3.kap-mult} that we also have Condition \eqref{C}.
	
We observe next that by work of Wooley \cite[Corollary 14.8]{W:NEC} we have 
$$
	I_{t,k}(X;\calP) \le I_{t,k}(X; \N)\ll X^{2t-k}
$$
whenever $t \ge k(k-1)/2 + \lfloor\sqrt{2k+1} \rfloor$. It follows that condition~\eqref{V1} holds in this setting with $t_0 = k(k-1)/2 + \lfloor \sqrt{2k+1} \rfloor$ and $\Delta(X) = (\log X)^{2t}$. At the same time, it follows from work by Harman \cite{Harman}, specifically in the formulation given in \cite[Lemma~3.3]{KT:survey}, that Hypothesis~\eqref{W1} is satisfied with $Q_W(X) = X^{1/2}$, $\vrho = 4^{1-k}$ and $L(X)=(\log X)^c$ for some suitable constant $c = c(k)$. 
	
With these parameters, we find that the quantity $Y(X)$ defined in~\eqref{1.Y} is given by $Y(X) = (\log X)^B$. Moreover, we are free to choose $B$ arbitrarily large, so we may assume that $B> 4^k(t_0+c(k))$. With this choice, we certainly have $L(X)\ll Y(X)^{\vrho - \del}$, and~\eqref{1.sig} is seen to be satisfied with some $\sig_0 \le 1/2$. Thus, all the hypotheses of Theorem~\ref{T:Waring} are satisfied, and we conclude that we have an asymptotic formula of the shape 
\begin{align}\label{7.WG-asymp}
	R_{s;k}(n; \calP) = \li(n^{1/k})^s n^{-1} (\frS_{\calP}(n) \frJ_{\calP}(n) + o(1))
\end{align}
whenever $s \ge 2t_0+1$. 
	
It remains to show that the main term agrees with the main term stated in Theorem~\ref{T0:Goldbach}. It is not hard to see that the singular series $\frS_{\calP}(n)$ of our analysis in Section~\ref{S6} is the same that occurs in \cite[Satz~11]{Hua}. For the singular integral, we note that with our approximation $\Psi(\xi) = \li(\xi)$ in the range $\xi > \tau$ we have $\psi(\xi)=1/\log \xi$ by the fundamental theorem of calculus, and hence 
\begin{align*}
	\d \sig_{\calP}(z)=\begin{cases}\begin{displaystyle}\frac{X}{\li X} \frac{\d z}{\log (Xz)}\end{displaystyle} & z \ge \tau/X, \\
	\begin{displaystyle}\frac{X \li (\tau)}{\li( X) \tau} \d z \end{displaystyle}& 0 \le z <\tau/X.
	\end{cases} 
\end{align*}
Thus, we obtain
\begin{align}\label{7.w_P}
	w_{\calP}(\gam) &= \frac{X}{\li X} \left(\frac{\li(\tau)}{\tau} \int_0^{\tau/X} e(\gam z^k) \d z + \int_{\tau/X}^1 e(\gam z^k) \frac{\d z}{\log (Xz)}\right) \nonumber \\
	&\sim \frac{\log X}{X} \int_\tau^X \frac{e(\gam \xi^k)}{\log \xi} \d \xi + O(1),
\end{align}
where we used~\eqref{7.li} together with substitution. At this point we note by the triangle inequality and on observing that the function $1/\log x$ is decreasing that 
\begin{align*}
	\int_\tau^X \frac{e(\gam \xi^k)}{\log \xi} \d \xi - \frac{1}{\log X} \int_0^X e(\gam \xi^k)\d \xi 
	&\ll \int_\tau^X \left|\frac{1}{\log \xi}-\frac{1}{\log X}\right|\d \xi + \frac{\tau}{\log X}\\
	&\ll \li X - \frac{X}{\log X} +O(1) \ll\frac{X}{(\log X)^2}.
\end{align*}
Using this bound in~\eqref{7.w_P}, we find that $w_{\calP}(\gam)  \sim w_{\N}(\gam)$ for any $\gam$, and thus 
\begin{align*}
	\frJ_{\calP}(n) &\sim \frJ_{\N}(n) =\frac{\Gam(1+1/k)^s}{\Gam(s/k)}
\end{align*}
by a standard result on the integer Waring problem (see e.g. \cite[Theorem~4.1]{dav}). Upon combining this with~\eqref{7.WG-asymp} and recalling~\eqref{7.li} and the functional equation of the Gamma function we discern that 
\begin{align*}
	R_{s;k}(n; \calP) \sim \frac{\Gam(1+1/k)^s}{\Gam(s/k)} \frS_{\calP}(n) \frac{n^{s/k-1}}{(\log n^{1/k})^{s} }= \frac{\Gam(1/k)^s}{\Gam(s/k)} \frS_{\calP}(n) \frac{n^{s/k-1}}{(\log n)^s},
\end{align*}
which finally establishes the theorem.

\end{document}